\documentclass[a4paper,11pt]{article}
\usepackage[utf8]{inputenc}
\usepackage{amsthm}
\usepackage{amsmath}
\usepackage{amssymb}
\usepackage{caption}
\usepackage{cite}
\usepackage{tikz}
\usepackage{subcaption}
\usepackage[labelformat=simple]{subcaption}
\usepackage{a4wide}
\usetikzlibrary{calc}

\usepackage{changes}

\usepackage{hyperref}
\newcommand{\footremember}[2]{%
	\footnote{#2}
	\newcounter{#1}
	\setcounter{#1}{\value{footnote}}%
}
\newcommand{\footrecall}[1]{%
	\footnotemark[\value{#1}]%
}

\sloppy

\usepackage{enumitem}
\setlist[enumerate]{noitemsep,topsep=3pt}
\setlist[itemize]{noitemsep,topsep=3pt}

\captionsetup[figure]{labelfont=it,textfont={it}}

\theoremstyle{definition}
\newtheorem{defin}{Definition}

\theoremstyle{plain}
\newtheorem{lem}[defin]{Lemma}
\newtheorem{thm}[defin]{Theorem}
\newtheorem{cor}[defin]{Corollary}
\newtheorem{obs}[defin]{Observation}
\newtheorem{oldthm}{Theorem}

\newtheorem{oldlem}[oldthm]{Lemma}

\newcommand{\A}{\mathcal{A}}
\newcommand{\B}{\mathcal{B}}
\newcommand{\C}{\mathcal{C}}

 \newtheorem{lemma}[defin]{Lemma}
 \newtheorem{theorem}[defin]{Theorem}
 \newtheorem{proposition}[defin]{Proposition}
 \newtheorem{corollary}[defin]{Corollary}

 \newtheorem{observation}[defin]{Observation}
 \theoremstyle{definition}
 \newtheorem{example}[defin]{Example}

\date{June 24, 2022}

\title{On the unicyclic graphs having vertices that belong to all their (strong) metric bases}

\author{%
	Anni Hakanen \footnote{Corresponding author, email: ahakanen@outlook.com.}  \footremember{UA}{Université Clermont-Auvergne, CNRS, Mines de Saint-Étienne, Clermont-Auvergne-INP, LIMOS, 63000 Clermont-Ferrand, France} \footremember{TY}{Department of Mathematics and Statistics, University of Turku, FI-20014, Finland}%
	\and Ville Junnila \footrecall{TY}%
	\and Tero Laihonen \footrecall{TY}%
	\and Ismael G. Yero \footremember{UC}{Department of Mathematics, Universidad de C\'{a}diz, Av. Ram\'on Puyol s/n, 11202 Algeciras, Spain}
}

\begin{document}

\maketitle

\begin{abstract}
A metric basis in a graph $G$ is a smallest possible set $S$ of vertices of $G$, with the property that any two vertices of $G$ are uniquely recognized by using a vector of distances to the vertices in $S$. A strong metric basis is a variant of metric basis that represents a smallest possible set $S'$ of vertices of $G$ such that any two vertices $x,y$ of $G$ are uniquely recognized by a vertex $v\in S'$ by using either a shortest $x-v$ path that contains $y$, or a shortest $y-v$ path that contains $x$. Given a graph $G$, there exist sometimes some vertices of $G$ such that they forcedly belong to every metric basis or to every strong metric basis of $G$. Such vertices are called (resp. strong) basis forced vertices in $G$. It is natural to consider finding them, in order to find a (strong) metric basis in a graph. However, deciding about the existence of these vertices in arbitrary graphs is in general an NP-hard problem, which makes desirable the problem of searching for (strong) basis forced vertices in special graph classes. This article centers the attention in the class of unicyclic graphs. It is known that a unicyclic graph can have at most two basis forced vertices. In this sense, several results aimed to classify the unicyclic graphs according to the number of basis forced vertices they have are given in this work. On the other hand, with respect to the strong metric bases, it is proved in this work that unicyclic graphs can have as many strong basis forced vertices as we would require. Moreover, some characterizations of the unicyclic graphs concerning the existence or not of such vertices are given in the exposition as well.
\end{abstract}

\noindent
{\bf Keywords:} metric dimension; metric basis; strong metric dimension; strong metric basis  \\

\noindent
{\bf AMS Subj.\ Class.\ (2020)}: 05C12

\section{Introduction}

Resolving sets and metric bases of graphs, as well as their diverse variants, are well known in the literature due to their properties of uniquely identifying the vertices of the graph, by means of distance vectors to the vertices in such structures. Accordingly, their applications in other areas of science cover a wide range of location or identification issues in fields like chemistry, social sciences, computer sciences, and biology, among other ones. For instance, the recent article \cite{tillquist-2019} presented an interesting relationship between one of these related structures and the representation of genomic sequences. More information on theoretical results, applications, and open questions in the area can be found in the fairly complete survey \cite{till-2022+}.

The metric bases of a graph are those resolving sets which have the smallest possible number of elements. Thus, while developing some application of these structures, the use of metric bases is usually requested, since it is natural to desire optimality in the solution. However, as one can suspect, finding the metric bases of a graph is a difficult problem in general, and so, we are required to find possible tools that could help us to construct metric bases for a given graph. Several approaches to this task are known in the literature. One of such tools consist of detecting some ``key vertices'' that are always required to be part of a metric basis. In \cite{BasisForced}, such vertices were called {\em basis forced vertices}. Two variations of this idea, while considering the $\ell$-solid resolving sets and the $\{\ell\}$-resolving sets instead of the metric basis, were considered earlier in \cite{Hakanen-solid,Hakanen-DMGT}, respectively.

The idea of detecting basis forced vertices in a graph significantly contributes to having some metric bases in the graph. In order to detect them, several issues might be taken into account. First, one would be interested in knowing on which graphs have basis forced vertices, \emph{i.e.}, to know (in advance) about families of graphs in which there are or there are not such vertices. For instance, cycles, complete bipartite graphs, and trees have no basis forced vertices. Second, it would be also of interest to know several structural properties of the graphs having (or not having) basis forced vertices, that is, describing properties of such graphs, like for instance, the maximum (or minimum) possible degree, order, size, diameter, etc. Third, for those graphs having basis forced vertices, to compute the exact value or at least to bound the number of such vertices. All these issues were already discussed in \cite{BasisForced} for general graphs. Unfortunately, it was also proved in \cite{BasisForced} that deciding whether a given vertex of a graph is a basis forced vertex belongs to the class of NP-hard problems, which is indeed a big trouble since then one cannot manage to have an algorithmic solution for an arbitrary graph in connection with our purposes.

This implies that researches need to focus, among other approaches, on the investigation for special graphs classes with the goal of dealing with the three aims described above. If we think about going from the sparsest graphs to the densest ones in order to detect the existence of basis forced vertices, then we begin with trees. But they have no basis forced vertices. However, if we just add one edge to a tree, then we get a unicyclic graph, which already can have basis forced vertices as first shown in \cite{BasisForced}. One positive fact in this case is that unicyclic graphs can have at most two basis forced vertices, which makes the work more tractable. In this sense, our first aim in this work is to describe those unicyclic graphs that have either 0, 1, or 2 basis forced vertices.

On the other hand, in the investigation we also consider a variation of the metric bases (called strong metric bases). The study shows that the behavior in the existence of vertices that belong to every strong metric basis (these are called strong basis forced vertices) in unicyclic graphs drastically change with respect to the classical metric bases. That is, while unicyclic graphs can have at most two basis forced vertices, there are unicyclic graphs that can have as many strong basis forced vertices as we would require.

The two following subsections are centered into giving formal definitions concerning metric basis and strong metric basis in graphs that are necessary in our exposition.

\subsection{Classical metric dimension}

Given a simple and connected graph $G$, a vertex $x\in V(G)$ {\em resolves} or {\em identifies} two vertices $u,v\in V(G)$ if $d_G(v,x)\ne d_G(u,x)$, where $d_G(y,z)$ (or simply $d(y,z)$ if $G$ is clear from the context) stands for the distance between $y$ and $z$, \emph{i.e.}, the length of a shortest $y-z$ path in $G$. It is also said that $u,v$ are {\em resolved} or {\em identified} by $x$. A set $S\subseteq V(G)$ is a \emph{resolving set} for $G$ if every two vertices of $G$ are resolved by a vertex of $S$. A resolving set of the smallest possible cardinality in $G$ is a \emph{metric basis} and its cardinality is known as the \emph{metric dimension} of $G$, denoted by $\dim(G)$.

The concepts above were independently introduced a few decades ago in \cite{Harary76,Slater75}. The interest in this topic has exploded over the last two decades, and for instance MathSciNet database lists nowadays about 280 entries to a query with the terms ``metric dimension'' and ``graphs'', from which about 270 were published after year 2000. Some significant and recent works on this topic are for instance \cite{Claverol,Mashkaria,SedlarUnicyclic,Sedlar22,Sedlar21,Wu}. Moreover, for more information on this area we suggest the very interesting (although not yet formally published in a journal) survey \cite{till-2022+}.

A vertex $v$ of a graph $G$ is said to be a \emph{basis forced vertex} if $v$ belongs to every metric basis of $G$. Basis forced vertices were first introduced in \cite{BasisForced}, although one could say they have some antecedents in the works \cite{BagheriUnique16,BuczkowskiUnique} where graphs with a unique metric basis were considered.

\subsection{Strong metric dimension}

The concepts of strong resolving sets and strong metric dimension were introduced in connection to uniquely distinguish graphs in the following sense. In the article \cite{Sebo-2004}, the following situation was pointed out: ``\emph{For a given resolving set $T$ of a graph $H$, whenever $H$ is a subgraph of a graph $G$ and the metric vectors of the vertices of $H$ relative to $T$ agree in both $H$ and $G$, is $H$ an isometric subgraph of $G$}? In connection with this, the authors claimed that although the vectors of distances with respect to a resolving set of a graph distinguish every pair of vertices in the graph, they do not uniquely recognize all distances between vertices in the graph. In connection with these situations, the strong version of resolving sets was presented in \cite{Sebo-2004}.

For a connected graph $G$, a vertex $w\in V(G)$ \emph{strongly resolves} two different vertices $u,v\in V(G)$ if
$d_G(w,u)=d_G(w,v)+d_G(v,u)$ or $d_G(w,v)=d_G(w,u)+d_G(u,v)$. Equivalently, there is some shortest $w-u$ path
that contains $v$ or some shortest $w-v$ path containing $u$. A set $S\subseteq V(G)$ is a \emph{strong resolving set} for $G$, if every two vertices of $G$ are strongly resolved by some vertex of $S$. The cardinality of a smallest strong resolving set for $G$ is called the \emph{strong metric dimension} of $G$, denoted by $\dim_s(G)$. A \emph{strong metric basis} of $G$ is a strong resolving set of cardinality $\dim_s(G)$.

The parameter above was further related to the classical concept of vertex covers in graphs in \cite{Oellermann-2007}. To see this, we say that a vertex $u$ of $G$ is \emph{maximally distant} from another vertex $v$ if every vertex $w\in N_G(u)$ satisfies that $d_G(v,w)\le d_G(v,u)$. The set of all vertices of $G$ that are maximally distant from some vertex of the graph is called the {\em boundary} of the graph, and is denoted by $\partial(G)$. If a vertex $u$ is maximally distant from other distinct vertex $v$, and $v$ is maximally distant from $u$, then $u$ and $v$ are known to be \emph{mutually maximally distant}, or MMD for short.

Now, for a connected graph $G$, the \emph{strong resolving graph} of $G$, denoted by $G_{SR}$, is a graph that has vertex set
$V(G_{SR})=V(G)$ and two vertices $u,v$ are adjacent in $G_{SR}$ if and only if $u$ and $v$ are mutually maximally
distant in $G$ (see \cite{Kuziak} for more information on structural properties of $G_{SR}$). Clearly, if $v\notin \partial(G)$, then $v$ is an isolated vertex in $G_{SR}$. By using this construction, the following interesting connection was proved in \cite{Oellermann-2007}, where $\alpha(G)$ represents the vertex cover number of $G$.

\begin{thm}{\em \cite{Oellermann-2007}}\label{th:strong-dim-cover}
For any connected graph $G$, a set $S\subseteq V(G)$ is a strong resolving set for $G$ if and only if $S$ is also a vertex cover for $G_{SR}$. Moreover, $\dim_s(G)=\alpha(G_{SR})$.
\end{thm}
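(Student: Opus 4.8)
The plan is to prove both directions of the equivalence and then deduce the formula for $\dim_s(G)$ as a corollary. Recall that $G_{SR}$ has an edge between $u$ and $v$ exactly when $u,v$ are mutually maximally distant (MMD) in $G$, and that a vertex cover of $G_{SR}$ is a set meeting every such edge. So the whole statement reduces to showing that the strong-resolving property of a set $S$ coincides with the vertex-cover property of $S$ in $G_{SR}$.

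First I would establish the following key observation, which is the real engine of the proof: \emph{if $u$ and $v$ are MMD in $G$, then every vertex $w$ that strongly resolves $u$ and $v$ must satisfy $w=u$ or $w=v$.} To see this, suppose $w$ strongly resolves $u,v$, say without loss of generality $d_G(w,u)=d_G(w,v)+d_G(v,u)$, so some shortest $w$–$u$ path passes through $v$. If $w\neq u$, take the neighbor $w'$ of $v$ on this path lying one step closer to $u$; then $d_G(w,w')=d_G(w,v)+d_G(v,u)-1$ wait — I instead argue directly on the endpoint $u$: the vertex $u$ has a neighbor $u'$ on the shortest $w$–$u$ path with $d_G(w,u')=d_G(w,u)-1$, but I must compare distances from $v$, not $w$. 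The clean argument is: maximal distantness of $u$ from $v$ forces that no neighbor of $u$ is farther from $v$ than $u$ is, and symmetrically for $v$ from $u$; combining this with the strong-resolving equation shows that neither $u$ nor $v$ can be an interior vertex of the relevant shortest path unless $w$ itself is one of $u,v$. I would write this out carefully, as it is the crux.

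From this observation the forward direction follows: if $S$ is a strong resolving set and $uv$ is any edge of $G_{SR}$ (so $u,v$ are MMD), then some $w\in S$ strongly resolves $u,v$, and by the observation $w\in\{u,v\}$, so $S$ meets the edge $uv$; hence $S$ is a vertex cover of $G_{SR}$. For the converse, suppose $S$ is a vertex cover of $G_{SR}$ and take any pair $x,y\in V(G)$. The standard fact (which I would justify) is that $x,y$ are strongly resolved by \emph{some} vertex $w$ for which both $x$ and $w$ lie on a common shortest path from $y$ (or symmetrically), and one can push the resolving vertex outward to a pair of mutually maximally distant vertices $u,v$ whose connecting shortest path contains both $x$ and $y$. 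Since $u,v$ are MMD they form an edge of $G_{SR}$, so $S$ contains $u$ or $v$, and that endpoint strongly resolves $x$ and $y$. Thus $S$ is a strong resolving set.

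The formula $\dim_s(G)=\alpha(G_{SR})$ is then immediate: the equivalence shows that the strong resolving sets of $G$ are precisely the vertex covers of $G_{SR}$, so a smallest strong resolving set has the same cardinality as a smallest vertex cover, namely $\alpha(G_{SR})$. The main obstacle I anticipate is the converse direction, specifically the ``pushing outward'' step: given an arbitrary pair $x,y$, one must produce an MMD pair $u,v$ such that a single shortest $u$–$v$ path contains both $x$ and $y$, so that resolving $u,v$ automatically resolves $x,y$. This requires an extremal/maximality argument (repeatedly extending a geodesic through $x$ and $y$ until both endpoints become maximally distant), and care is needed to ensure the extension terminates at a genuinely MMD pair rather than merely a one-sidedly maximally distant pair.
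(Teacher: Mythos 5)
This statement is one the paper imports verbatim from \cite{Oellermann-2007} and never proves, so there is no internal proof to compare against; the right benchmark is the original argument of Oellermann and Peters-Fransen, and your plan is essentially that argument. Your key observation is correct, and the clean repair of your false start is this: if $w\notin\{u,v\}$ and $d_G(w,u)=d_G(w,v)+d_G(v,u)$, let $v'$ be the neighbour of $v$ \emph{toward $w$} on a shortest $w$--$u$ path through $v$; then $d_G(v',u)\ge d_G(w,u)-d_G(w,v')=d_G(v,u)+1$, so $v$ has a neighbour strictly farther from $u$, contradicting that $v$ is maximally distant from $u$. (Your instinct to look at a neighbour of $u$, or at the neighbour of $v$ toward $u$, is what derailed the first attempt; the contradiction lives on the $w$-side of the interior vertex.) For the converse, the obstacle you flag --- that iterated extension might only yield a one-sidedly maximally distant pair --- dissolves via a two-line transfer lemma: first extend from $x$ (anchored at $y$) to get $u$ maximally distant from $y$ with $d_G(u,y)=d_G(u,x)+d_G(x,y)$, then extend from $y$ (anchored at $u$) to get $v$ maximally distant from $u$ with $d_G(u,v)=d_G(u,y)+d_G(y,v)$; now for any neighbour $u'$ of $u$ one has $d_G(u',v)\le d_G(u',y)+d_G(y,v)\le d_G(u,y)+d_G(y,v)=d_G(u,v)$, so $u$ is automatically maximally distant from $v$ and the pair is genuinely MMD. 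Both extensions terminate since the distance to the fixed anchor strictly increases at each step and is bounded by the diameter. Finally, the covered endpoint does strongly resolve $x,y$: from $d_G(u,v)=d_G(u,x)+d_G(x,y)+d_G(y,v)$ one reads off $d_G(u,y)=d_G(u,x)+d_G(x,y)$ (and symmetrically for $v$), which is exactly the strong-resolving equation. With these two details written in, your proof is complete and matches the canonical one; the deduction $\dim_s(G)=\alpha(G_{SR})$ then follows exactly as you state, recalling that in this paper $\alpha$ denotes the vertex cover number.
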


The notion of basis forced vertices for the classical metric dimension can clearly be adapted to the strong version. That is, from now on a vertex that belongs to every strong metric basis of a graph is called a \emph{strong basis forced vertex}. Based on Theorem \ref{th:strong-dim-cover}, the existence of strong basis forced vertices can be reduced to studying vertices that belong to every vertex cover set of minimum cardinality in $G_{SR}$, or equivalently (and based on the famous Gallai's theorem relating the vertex cover and independence number) vertices that do not belong to any maximum independent set of $G_{SR}$. This shows that, in such situation, several classical topics are involved.

In connection with this last comment, we remark the following. In \cite{Boros-2002}, the \emph{core} of a graph $G$, denoted by $core(G)$, was defined to be the set of vertices of $G$ that belong to all maximum independent sets of $G$, and the corona of $G$, denoted by $corona(G)$, as the vertices that belong to some maximum independent set. It can be then readily seen that the set of vertices that belong to every minimum vertex cover set of $G$ is precisely $VC(G)=V(G)\setminus corona(G)$. Consequently, we note that the set of strong basis forced vertices of $G$ is indeed $VC(G_{SR})$, and therefore, our study can be reduced to finding the set $VC(G_{SR})$ for a given graph $G$.

According to the structure of the strong resolving graph of a graph, it can be readily seen that for instance, trees (including paths), cycles, complete graphs, complete bipartite graphs, grid graphs or torus graphs (Cartesian product of two paths or two cycles, respectively), and hypercubes do not contain strong basis forced vertices. On the contrary, in \cite{Kuziak} a graph $G$ for which $G_{SR}$ is isomorphic to a path of odd order was given. Since paths of odd order have a unique vertex cover of minimum cardinality, it is then clear that such $G$ has a unique strong metric basis, and clearly, all the vertices of such unique metric basis are strong basis forced vertices.

\subsection{Other basic terminology}

The following definitions and notations shall be used in our exposition. The set of \emph{leaves} (vertices of degree one, also called \emph{pendants}) in a graph $G$ is denoted by $N_1(G)$, and we set $n_1(G)=|N_1(G)|$. Given a set $S\subsetneq V(G)$ and two vertices $u\in S$ and $v\notin S$, we write $S[u \leftarrow v] = (S \setminus \{u\}) \cup \{v\}$. For a vertex $v\in V(G)$, the \emph{open neighborhood} $N_G(v)$ of $v$ is the set of vertices adjacent to $v$. The {\em diameter} of a graph $G$ is the largest possible distance between any two vertices of $G$. A vertex $v$ is \emph{diametral} if there exists a vertex $u$ such that $d_G(u,v)$ equals the diameter of $G$.

\section{General Results on Basis Forced Vertices}

In this section, we present a couple of results on basis forced vertices in connected graphs. By the construction of the following theorem, we obtain that if $G$ is a connected graph with some basis forced vertices, then we can construct a graph with the same basis forced vertices but with more vertices in total than $G$. Our result generalizes the one of \cite[Theorem~6]{BagheriUnique16}.

\begin{thm}
Let $G$ be a connected graph with $n$ vertices, and let $B \neq \emptyset$ be the set of basis forced vertices of $G$. Let $b \in B$, and let $v\in V(G) \setminus B$ be such that $d(b,v) = \max \{ d(b,u) \ | \ u \in V(G) \setminus B \}$. Let $P_m$ be the path $v_1 \cdots v_m$. Let $H$ be the graph with 	$V(H) = V(G) \cup V(P_m)$ and 	$E(G) = E(G) \cup E(P_m) \cup \{\{v,v_1\}\}$.
Then, the set $B$ is also the set of basis forced vertices of $H$ and $\dim(H) = \dim(G)$.
\end{thm}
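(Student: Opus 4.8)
The plan is to first establish the distance structure of $H$ and use it to prove $\dim(H)=\dim(G)$, and only then to pin down the forced vertices. Writing $v_0=v$, the pendant path attaches to the rest of $H$ only through $v$, so comparing shortest paths gives $d_H(x,y)=d_G(x,y)$ for $x,y\in V(G)$, $d_H(v_i,x)=i+d_G(v,x)$ for $x\in V(G)$, and $d_H(v_i,v_j)=|i-j|$. These three identities are all the argument needs.

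First I would prove $\dim(H)\le\dim(G)$ by showing that \emph{any} metric basis $S$ of $G$ already resolves $H$. Since every element of $B$ lies in $S$, a pair of new vertices $v_i,v_j$ is resolved by any element of $S$, because their coordinates differ by the constant $i-j\neq 0$. A new vertex $v_i$ is separated from an old \emph{forced} vertex $x\in B\subseteq S$ by $x$ itself, since $d_H(v_i,x)\ge 1>0=d_H(x,x)$. The only remaining pairs are $v_i$ against an old \emph{non-forced} vertex $x$, and here the hypothesis on $v$ is decisive: using $b\in S$ together with $d_H(x,b)=d_G(x,b)\le d_G(v,b)<i+d_G(v,b)=d_H(v_i,b)$, the vertex $b$ resolves the pair. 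I expect this last case to be the main obstacle, and it is precisely why $v$ must be chosen farthest from $b$ among non-forced vertices. Since old/old pairs are resolved by $S$ as distances are preserved, $S$ resolves $H$, so $\dim(H)\le|S|=\dim(G)$.

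Next I would prove $\dim(G)\le\dim(H)$ by a projection onto $V(G)$. Given a metric basis $S'$ of $H$, set $S=(S'\cap V(G))\cup\{v\}$ when $S'$ meets the path and $S=S'$ otherwise; in either case $|S|\le|S'|$. To see that $S$ resolves $G$, take distinct $x,y\in V(G)$ and an element $s\in S'$ separating them in $H$: if $s\in V(G)$ then $s\in S$ and it still works since distances agree, while if $s=v_i$ then $i+d_G(v,x)\neq i+d_G(v,y)$ forces $d_G(v,x)\neq d_G(v,y)$, so $v\in S$ separates $x,y$. Hence $\dim(G)\le|S|\le|S'|=\dim(H)$, and with the previous bound we obtain $\dim(H)=\dim(G)=:d$.

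Finally I would determine the forced vertices of $H$ from these two constructions. For the inclusion of the forced set into $B$: a non-forced old vertex $w$ is missed by some metric basis of $G$, which by the upper-bound argument is a metric basis of $H$ missing $w$; and every new vertex is missed by any metric basis of $G$, all of whose elements lie in $V(G)$. Thus no vertex of $V(H)\setminus B$ is forced in $H$. For $B$ into the forced set: if some metric basis $S'$ of $H$ omitted a vertex $b'\in B$, then the projection $S$ from the previous paragraph would be a resolving set of $G$ of size at most $d$, hence a metric basis of $G$, that still omits $b'$, because the only vertex ever added is $v\neq b'$; this contradicts $b'$ being basis forced in $G$. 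Combining the two inclusions identifies $B$ as exactly the set of basis forced vertices of $H$, which together with $\dim(H)=\dim(G)$ completes the proof.
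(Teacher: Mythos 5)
Your proof is correct and follows essentially the same route as the paper: metric bases of $G$ are shown to resolve $H$ using the maximality of $d(b,v)$ (so that $b$ separates each path vertex from every non-forced vertex), metric bases of $H$ are pulled back to $G$ by trading path vertices for $v$, and the set of forced vertices is then identified exactly as in the paper's final paragraph. The one minor refinement is your projection $S=(S'\cap V(G))\cup\{v\}$, which handles a basis of $H$ containing several path vertices in one step, whereas the paper's single swap $S[v_i\leftarrow v]$ tacitly treats only one such vertex.
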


\begin{proof}
	
	Let $R$ be a metric basis of $G$. Let us prove that $R$ is a resolving set of $H$. Clearly, any pair of vertices in $V(G)$ is resolved by $R$. We have $d(b,v_i) = d(b,v) + i$ for all $v_i \in V(P_m)$, and thus $d(b,v_i) \neq d(b,v_j)$ when $i \neq j$. Since $d(b,v) = \max \{ d(b,u) \ | \ u \in V(G) \setminus B \}$ and $B \subseteq R$, we have $d(b,v_i) \neq d(b,u)$ for all $u \in V(G) \setminus R$. Consequently, $R$ is a resolving set of $H$ and $\dim (H) \leq \dim (G)$.
	
	Let $S$ be a metric basis of $H$. If $S$ does not contain elements of $V(P_m)$, then $S$ is clearly a resolving set of $G$. Suppose that $v_i \in S$ for some $v_i \in V(P_m)$.
	Let $x,y \in V(G)$ be such that $d(v_i,x) \neq d(v_i,y)$. We have $d(v,x) = d(v_i,x) - i \neq d(v_i,y) - i = d(v,y)$. Thus, the set $S[v_i \leftarrow v]$ is a resolving set of $G$. Consequently, $\dim (G) \leq \dim (H)$.
	
	We have shown that $\dim (H) = \dim (G)$. To conclude the proof, we will show that $H$ has the same basis forced vertices as $G$. Since the metric bases of $G$ are metric bases of $H$, any basis forced vertex of $H$ is  a basis forced vertex of $G$. Suppose then that some $b' \in B$ is not a basis forced vertex of $H$. Then there exists a metric basis $S$ of $H$ that does not contain $b'$. The set $S$ cannot be a metric basis of $G$, since then $b'$ would not be a basis forced vertex of $G$. Consequently, the set $S$ contains some $v_i$ and, by the arguments above, the set $S' = S[v_i \leftarrow v]$ is a metric basis of $G$. Since $b'$ is a basis forced vertex of $G$, we have $b' \in S'$ and $b' = v$. However, $v$ was chosen so that $v \notin B$, a contradiction.
\end{proof}

The following theorem gives us a condition for the case when we want to transform a basis forced vertex into a pendant, \emph{i.e.} if $v$ is a basis forced vertex we can attach a pendant to $v$ and that pendant becomes a basis forced vertex of the resulting graph.

\begin{thm}\label{thm:gen:pendantforced}
	Let $G$ be a connected graph with a basis forced vertex $v$. Let $H$ be the graph obtained from $G$ by attaching a pendant $u$ to $v$. The vertex $u$ is a basis forced vertex of $H$ if and only if for every metric basis $R$ of $G$ there exists a vertex $w \in N_G(v)$ such that $d_H(r,w) = d_H(r,u)$ for all $r \in R$.
\end{thm}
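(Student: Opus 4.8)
The plan is to exploit the fact that, since $u$ is a pendant attached to $v$, distances inside $G$ are unchanged in $H$, while $d_H(x,u) = d_H(x,v)+1$ for every $x \in V(G)$. The first thing I would record is the resulting \emph{twin resolver} property: $u$ and $v$ resolve exactly the same pairs of vertices of $G$, i.e.\ for $x,y \in V(G)$ one has $d_H(u,x) \ne d_H(u,y)$ if and only if $d_H(v,x) \ne d_H(v,y)$. This observation is the engine of the whole argument.

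Using it, I would first establish $\dim(H) = \dim(G)$. For ``$\le$'', start from a metric basis $R$ of $G$; since $v$ is basis forced we have $v \in R$, and replacing $v$ by $u$ yields a set $R[v \leftarrow u]$ that still resolves every pair of $V(G)$ (by the twin property) and resolves every pair $\{u,x\}$ because it contains $u$. For ``$\ge$'', take a metric basis $S$ of $H$: if $u \in S$, replace it by $v$ (or simply delete it when $v \in S$ already) and use the twin property to see the resulting subset of $V(G)$ resolves $G$; if $u \notin S$, then $S \subseteq V(G)$ already resolves $G$.

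Then comes the core reduction. The vertex $u$ is basis forced in $H$ iff every metric basis of $H$ contains $u$, equivalently iff there is \emph{no} metric basis $S$ of $H$ with $u \notin S$. Because $\dim(H)=\dim(G)$ and any $S\subseteq V(G)$ resolving $H$ automatically resolves $G$, these $S$ are exactly the metric bases $R$ of $G$ which, viewed inside $V(H)$, happen to resolve $H$. So I must characterize when a metric basis $R$ of $G$ \emph{fails} to resolve $H$. Since $R$ already resolves all pairs of $V(G)$, and the pair $\{u,v\}$ is resolved by every $r\in R$ (its two distances differ by exactly $1$), the only possible failure is an unresolved pair $\{u,w\}$ with $w \in V(G)\setminus\{v\}$, that is, $d_H(r,w)=d_H(r,u)$ for all $r\in R$.

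The decisive step, and the only place where the hypothesis that $v$ is basis forced is genuinely used, is to locate such a $w$. Since $v$ is basis forced, $v\in R$; evaluating $d_H(r,w)=d_H(r,u)$ at $r=v$ gives $d_H(v,w)=d_H(v,u)=1$, forcing $w\in N_G(v)$. Hence $R$ fails to resolve $H$ precisely when there is a $w\in N_G(v)$ with $d_H(r,w)=d_H(r,u)$ for all $r\in R$. Combining this with the reduction above, $u$ is basis forced in $H$ iff no metric basis of $G$ resolves $H$, i.e.\ iff for every metric basis $R$ of $G$ there exists $w\in N_G(v)$ with $d_H(r,w)=d_H(r,u)$ for all $r\in R$, which is exactly the claimed equivalence. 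I expect no serious obstacle beyond this bookkeeping; the one subtlety to get right is noticing that $v\in R$ pins the offending vertex down to a neighbor of $v$.
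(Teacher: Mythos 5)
Your proposal is correct and takes essentially the same route as the paper's proof: both establish $\dim(H)=\dim(G)$ via the $u \leftrightarrow v$ swap arguments, and both use the fact that $v \in R$ (since $v$ is basis forced) together with $d_H(r,u)=d_H(r,v)+1$ to pin any vertex unresolved from $u$ down to $N_G(v)$. The paper splits this into two implications with an explicit case analysis of the pairs $\{u,y\}$, whereas you package the identical content as a single equivalence chain; the difference is purely presentational.
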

\begin{proof}
	Let us first consider how the metric bases of $G$ and $H$ are related to each other.
	
	Let $R$ be any metric basis of $G$. Suppose that $R$ is not a resolving set of $H$. We will show that then the set $R[v \leftarrow u]$ is a resolving set of $H$. Since $v$ is a basis forced vertex of $G$, the vertex $v$ is contained in $R$. The vertex $u$ resolves any pair of vertices in $H$ that $v$ resolves. Moreover, the vertex $u$ resolves any pair of vertices where one vertex is $u$ itself. Thus, the set $R[v \leftarrow u]$ is a resolving set of $H$.
	
	Let $S$ be a metric basis of $H$. If $u \notin S$, then $S$ is a resolving set of $G$, because $S \subseteq V(G)$ and $S$ resolves any pair of vertices in $G$. Suppose that $u \in S$. Then the set $S[u \leftarrow v]$ is a resolving set of $G$, because the vertex $v$ resolves any pair within $V(H) \setminus \{u\} = V(G)$ that $u$ resolves.
	
	Consequently, we have $\dim (G) = \dim (H)$, and the metric bases of $G$ and $H$ are exactly the same except that we may need to replace $v$ with $u$ or vice versa.
	
	($\Leftarrow$) Assume that for every metric basis $R$ of $G$ there exists a vertex $w \in N_G(v)$ such that $d_H(r,w) = d_H(r,u)$ for all $r \in R$. Suppose to the contrary that the vertex $u$ is not a basis forced vertex of $H$. Then there exists a metric basis $S$ of $H$ that does not contain $u$. Now $S$ is also a metric basis of $G$. By our assumption there exists a vertex $w \in N_G(v)$ such that $d_H(s,w) = d_H(s,u)$ for all $s \in S$. This means that the vertices $w$ and $u$ are not resolved by $S$ in $H$, a contradiction.
	
	($\Rightarrow$) Assume then that there exists a metric basis $R$ of $G$ such that for all $w \in N_G(v)$ we have $d_H(r,w) \neq d_H(r,u)$ for some $r \in R$. Since $v$ is a basis forced vertex of $G$, we have $v \in R$. The set $R$ resolves all pairs $x,y$ in $H$:
	\begin{itemize}
		\item If $x,y \in V(G)$, then $R$ resolves this pair as $R$ is a metric basis of $G$.
		\item If $x=u$ and $y=v$, then $R$ resolves this pair as $v \in R$.
		\item If $x=u$ and $y \in N_G(v)$, then $R$ resolves this pair due to our assumption.
		\item If $x=u$ and $y \notin N_G[v]$, then $d_H(v,u)=1$ and $d_H(v,y) \geq 2$, and $R$ resolves this pair as $v \in R$.
	\end{itemize}
	The set $R$ is a metric basis of $H$ that does not contain $u$, and thus $u$ is not a basis forced vertex of $H$.
\end{proof}

\begin{figure}
\begin{subfigure}[b]{0.32\linewidth}
	\centering
		\begin{tikzpicture}[scale=.6]
			\draw (0,2) -- (1,3) -- (1,1) -- (0,2) -- (-1,3) -- (-1,1) -- (0,2) -- (0,0) -- (1,1) -- (-1,1) -- (0,0) (-1,3) -- (1,3);
			\draw[dashed] (-1,3) -- (-3,3);
			\draw \foreach \x in {(-3,3),(0,0),(-1,1),(1,1),(0,2)} {
				\x node[circle, draw, fill=white, inner sep=0pt, minimum width=6pt] {}
			};
			\draw \foreach \x in {(-1,3),(1,3)} {
				\x node[circle, draw, fill=black, inner sep=0pt, minimum width=6pt] {}
			};
			\draw
				(-3,2.5) node[] {$u_1$}
				(-1,3.5) node[] {$v_1$}
				(-1.6,1) node[] {$w_1$}
				(1,3.5) node[] {$r_1$};
		\end{tikzpicture}
	\caption{The graph $G$ with the added pendant $u_1$.}\label{fig:ex:addpend1}
\end{subfigure}
\hfill
\begin{subfigure}[b]{0.32\linewidth}
	\centering
		\begin{tikzpicture}[scale=.6]
			\draw (0,2) -- (1,3) -- (1,1) -- (0,2) -- (-1,3) -- (-1,1) -- (0,2) -- (0,0) -- (1,1) -- (-1,1) -- (0,0) (-1,3) -- (1,3);
			\draw (-1,3) -- (-3,3);
			\draw[dashed] (1,3) -- (3,3);
			\draw \foreach \x in {(-1,3),(3,3),(0,0),(-1,1),(1,1),(0,2)} {
				\x node[circle, draw, fill=white, inner sep=0pt, minimum width=6pt] {}
			};
			\draw \foreach \x in {(-3,3),(1,3)} {
				\x node[circle, draw, fill=black, inner sep=0pt, minimum width=6pt] {}
			};
			\draw
			(3,2.5) node[] {$u_2$}
			(1,3.5) node[] {$v_2$}
			(1.6,1) node[] {$w_2$}
			(-3,2.5) node[] {$r_2$};
		\end{tikzpicture}
	\caption{The graph $H_1$ with the added pendant $u_2$.}\label{fig:ex:addpend2}
\end{subfigure}
\hfill
\begin{subfigure}[b]{0.32\linewidth}
	\centering
		\begin{tikzpicture}[scale=.6]
			\draw (0,2) -- (1,3) -- (1,1) -- (0,2) -- (-1,3) -- (-1,1) -- (0,2) -- (0,0) -- (1,1) -- (-1,1) -- (0,0) (-1,3) -- (1,3);
			\draw (-1,3) -- (-3,3);
			\draw (1,3) -- (3,3);
			\draw \foreach \x in {(-1,3),(1,3),(0,0),(-1,1),(1,1),(0,2)} {
				\x node[circle, draw, fill=white, inner sep=0pt, minimum width=6pt] {}
			};
			\draw \foreach \x in {(-3,3),(3,3)} {
				\x node[circle, draw, fill=black, inner sep=0pt, minimum width=6pt] {}
			};
		\end{tikzpicture}
	\caption{The graph $H_2$. \vspace{\baselineskip}}\label{fig:ex:addpend3}
\end{subfigure}
\caption{An example of how Theorem~\ref{thm:gen:pendantforced} can be used.}\label{fig:ex:addpend}
\end{figure}
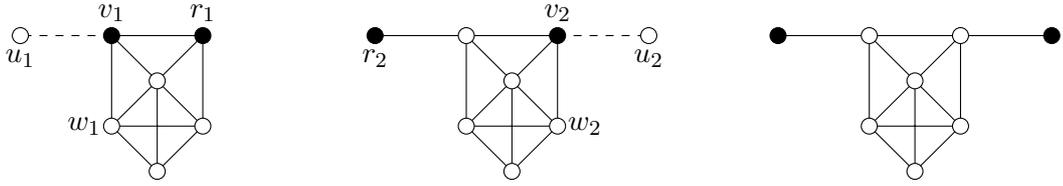

Consider the graph $G$ in Figure~\ref{fig:ex:addpend1}. This graph was shown to have a unique metric basis $\{v_1,r_1\}$ in \cite{BasisForced}. If we attach the pendant $u_1$ to the vertex $v_1$, then the vertex $u_1$ becomes a basis forced vertex of the new graph $H_1$ (illustrated in Figure~\ref{fig:ex:addpend2}). Indeed, we clearly have $d_{H_1}(v_1,u_1) = d_{H_1}(v_1,w_1) = 1$ and $d_{H_1}(r_1,u_1) = d_{H_1}(r_1,w_1) = 2$. Since the set $\{v_1,r_1\}$ is the only metric basis of $G$, the vertex $u_1$ is a basis forced vertex of $H_1$ according to Theorem~\ref{thm:gen:pendantforced}. Moreover, according to the proof of Theorem~\ref{thm:gen:pendantforced}, the set $\{u_1,r_1\}$ is the unique metric basis of $H_1$. Similarly, we can add the pendant $u_2$ to the graph $H_1$ as it is illustrated in Figure~\ref{fig:ex:addpend2}. The set $\{v_2,r_2\}$ is the unique metric basis of $H_1$, and the vertex $w_2$ fulfils the requirements of Theorem~\ref{thm:gen:pendantforced}. Thus, the pendants of the graph $H_2$ (illustrated in Figure~\ref{fig:ex:addpend3}) are both basis forced vertices, and they form the unique metric basis of $H_2$.

Let $G'$ be the graph we obtain from $G$ when we remove the edge $v_1 r_1$. The set $\{v_1,r_1\}$ is again the unique metric basis of $G'$. However, if we then attach the pendant $u_1$ to $v_1$ and thus obtain the graph $H'_1$, the vertex $u_1$ is not a basis forced vertex of $H'_1$. Indeed, now we have $d_{H'_1} (r_1,u_1) = 3$, but the vertices of $N_{G'} (v_1)$ are at distance at most $2$ from $r_1$. Thus, according to Theorem~\ref{thm:gen:pendantforced} the vertex $u_1$ is not a basis forced vertex of $H'_1$.

\section{Basis Forced Vertices in Unicyclic Graphs}

We begin this section with some preliminary information. Sedlar and \v{S}krekovski studied the metric dimension problem of unicyclic graphs in \cite{SedlarUnicyclic}, and they continued their work in \cite{Sedlar21}. We follow their terminology in order to use the characterisation of resolving sets of unicyclic graphs they showed in \cite{Sedlar21}.

Let $G$ be a unicyclic graph with the cycle $C=v_0 v_1 \cdots v_{g-1} v_0$. The components of $G - E(C)$ are denoted by $T_{v_i}$, where the subscript indicates which vertex of $C$ is contained in that component. A \emph{thread} is a path (or just a pendant). We denote by $\ell(v)$ the number of threads attached to $v$.

If $v \in V(C)$ and $\deg (v) \geq 4$ or $v \notin V(C)$ and $\deg (v) \geq 3$, then $v$ is a \emph{branching} vertex. (Notice that a vertex on the cycle that has only one thread attached to it is \emph{not} a branching vertex.)
If $T_{v_i}$ contains a branching vertex, then the vertex $v_i$ is \emph{branch-active}. The number of branch-active vertices on the cycle of $G$ is denoted by $b(G)$.
The set $S$ is \emph{branch-resolving} if for every $v\in V(G)$ of degree at least 3 the set $S$ contains a vertex from at least $\ell(v)-1$ threads attached to $v$. We denote
\[ L(G) = \sum_{v\in V(G), \ell(v) > 1} (\ell(v) - 1). \]

Let $S \subseteq V(G)$. A vertex $v_i \in V(C)$ is \emph{S-active} if $S \cap V(T_{v_i}) \neq \emptyset$. The number of $S$-active vertices on the cycle is denoted by $a(S)$. Moreover, a set $S \subseteq V(G)$ is called \emph{biactive} if $a(S) \geq 2$.
Let $v_i,v_j,v_k \in V(C)$. If $ d(v_i,v_j) + d(v_j,v_k) + d(v_k,v_i) = |V(C)|, $
then the vertices $v_i$, $v_j$ and $v_k$ form a \emph{geodesic triple} on $C$.

In \cite{SedlarUnicyclic}, the metric dimension of a unicyclic graph was determined using the concepts of branch-resolving sets and geodesic triples. The following lemma and theorem are a collection of the most important results of \cite{SedlarUnicyclic} concerning our work.

\begin{oldlem}\label{lem:Sedlar1}
\cite{SedlarUnicyclic} Let $G$ be a unicyclic graph with the cycle $C$.
\begin{enumerate}[label=\emph{(\roman*)}]
\item \label{lem:S1:resolvingbranch} If $S$ is a resolving set of $G$, then $S$ is a biactive branch-resolving set.
\item \label{lem:S1:branchsame} If $S$ is a biactive branch-resolving set of $G$, then any two vertices within the same component of $G - E(C)$ are resolved by $S$.
\item If three $S$-active vertices form a geodesic triple on the cycle $C$, then any two vertices that are in distinct components of $G - E(C)$ are resolved by $S$.
\item \label{lem:S1:branchgeodesic} Let $S$ be a branch-resolving set of $G$ with three $S$-active vertices on $C$ forming a geodesic triple. Then $S$ is a resolving set of $G$.
\end{enumerate}
\end{oldlem}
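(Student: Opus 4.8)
The plan is to handle the four parts in order, all of them resting on one additive distance identity: in a unicyclic graph, a vertex $s$ lying in the component $T_{v_k}$ reaches any $x\in T_{v_i}$ only through $v_k$ and then through $v_i$, so $d(s,x)=d(s,v_k)+d(v_k,v_i)+d(v_i,x)$, where $d(v_k,v_i)$ is just the cyclic distance on $C$. For (i) I would prove the contrapositive in two pieces. If $S$ is \emph{not} branch-resolving, some vertex $v$ of degree at least $3$ has two threads $P_a,P_b$ disjoint from $S$; the vertices lying at distance $1$ from $v$ on $P_a$ and on $P_b$ are then at distance $d(s,v)+1$ from every $s\in S$, hence unresolved. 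If $S$ is \emph{not} biactive, then $S$ sits inside a single $T_{v_i}$, and the cycle-neighbours $v_{i-1},v_{i+1}$ satisfy $d(s,v_{i-1})=d(s,v_i)+1=d(s,v_{i+1})$ for every $s\in S$, so again $S$ fails. Thus any resolving set is biactive and branch-resolving.

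For (ii), take $x,y$ in the same $T_{v_i}$ and suppose $S$ resolves neither. Biactivity supplies some $s_0\in S$ outside $T_{v_i}$, and the identity forces $d(v_i,x)=d(v_i,y)$; so $x,y$ lie at equal depth and diverge at a common vertex $p$ with $d(p,x)=d(p,y)=r\ge 1$, inside the subtrees hanging from two distinct children $c_x,c_y$ of $p$. A short calculation shows that any $s$ in the subtree below $c_x$ (or below $c_y$) would strictly resolve the pair, so $S$ must avoid both subtrees entirely. The crux is then a descent argument: in a subtree avoided by $S$, a \emph{deepest} branching vertex $w$ has all its children as threads, whence $\ell(w)\ge 2$ and branch-resolving would force $S$ to meet a thread at $w$, a contradiction; therefore each avoided subtree contains no branching vertex and is itself a thread attached to $p$. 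But then $p$ (of degree at least $3$) carries two $S$-free threads, contradicting branch-resolving at $p$. This settles (ii).

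For (iii), fix $x\in T_{v_i}$, $y\in T_{v_j}$ with $i\ne j$ and an $S$-active geodesic triple $v_a,v_b,v_c$. By the identity, an $s\in S\cap T_{v_k}$ resolves $x,y$ exactly when $d(v_k,v_i)-d(v_k,v_j)\neq d(v_j,y)-d(v_i,x)$, and the right-hand side does not depend on $k$; so it suffices that $g(k):=d(v_k,v_i)-d(v_k,v_j)$ be non-constant on $\{a,b,c\}$. Placing $v_i,v_j$ on the cycle at distance $t\ge 1$ and computing, $g$ is piecewise linear with slopes in $\{+2,0,-2\}$: every value strictly between its extremes $\pm t$ is attained by at most two vertices, while each extreme value is attained on a single arc of length strictly less than $|V(C)|/2$. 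Since a geodesic triple is precisely one whose three cyclic gaps are all at most $|V(C)|/2$, its vertices cannot all lie in one such short arc, nor can three of them share an intermediate value; hence $g$ is non-constant on $\{a,b,c\}$ and some $v_k$ produces a resolving $s$.

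Part (iv) is then assembly: three $S$-active vertices give $a(S)\ge 2$, so $S$ is biactive, and combined with branch-resolving, (ii) resolves every same-component pair while (iii) resolves every pair in distinct components; as these exhaust all pairs, $S$ is resolving. I expect the two genuine obstacles to be, first, the descent step in (ii) — showing an $S$-avoided subtree must reduce to a single thread so that branch-resolving can be applied at $p$ — and second, the level-set analysis of $g$ in (iii), where the geodesic-triple hypothesis has to be converted into the geometric statement that the three cycle vertices cannot be confined to an arc shorter than half the cycle.
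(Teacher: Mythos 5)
The paper does not actually prove this lemma: it is imported verbatim from the cited work of Sedlar and \v{S}krekovski, so your proposal can only be judged on its own merits. Parts (i), (ii) and (iv) of your outline are sound; in particular the descent argument in (ii) --- showing that an $S$-avoided subtree hanging at the divergence vertex $p$ contains no branching vertex, hence is a thread, so that branch-resolving at $p$ (which has degree at least $3$ whether or not $p=v_i$) yields the contradiction --- is correct and complete.

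The genuine gap is in (iii). Your pivotal claim, ``an $s\in S\cap T_{v_k}$ resolves $x,y$ \emph{exactly when} $d(v_k,v_i)-d(v_k,v_j)\neq d(v_j,y)-d(v_i,x)$,'' is false when $k\in\{i,j\}$, because the additive identity $d(s,x)=d(s,v_k)+d(v_k,v_i)+d(v_i,x)$ requires $s$ and $x$ to lie in different components of $G-E(C)$; for $s\in T_{v_i}$ one only has $d(s,x)=d(s,q)+d(q,x)$ with $q$ the meet of $s$ and $x$ in the tree. Concretely, take $C_4=v_0v_1v_2v_3$, let $y$ be a pendant at $v_2$, let $T_{v_0}$ contain the path $v_0u_1u_2u_3u_4x$ and a pendant $s$ at $u_1$: then $g(0)=-2\neq d(v_2,y)-d(v_0,x)=-4$, yet $d(s,x)=5=d(s,y)$, so the ``if'' direction of your criterion fails at $k=i$. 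This matters because geodesic-triple vertices may coincide with $v_i$ or $v_j$: with the triple $\{v_0,v_1,v_2\}$, $i=0$, $j=2$ and $d(v_2,y)-d(v_0,x)=0$, the only triple vertices with $g(k)\neq 0$ are $v_i$ and $v_j$ themselves, so your inference ``$g$ non-constant on $\{a,b,c\}$ $\Rightarrow$ some witness resolves'' rests entirely on invalid instances of the identity. The repair needs a one-sided substitute that you never state: any $s\in T_{v_i}$ resolves $x,y$ whenever $d(v_i,x)<d(v_i,v_j)+d(v_j,y)$ (since then $d(s,x)\le d(s,v_i)+d(v_i,x)<d(s,v_i)+d(v_i,v_j)+d(v_j,y)=d(s,y)$), and symmetrically at $j$; writing $R_0=d(v_j,y)-d(v_i,x)$ and $t=d(v_i,v_j)$, witnesses at $i$ and $j$ can only fail when $R_0\le -t$ or $R_0\ge t$, respectively. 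One then splits on $R_0$ versus the interval $[-t,t]$: for $|R_0|>t$ any triple vertex outside $\{i,j\}$ (one exists) resolves via the valid identity; for $R_0=\mp t$ all potentially failing vertices lie in the extreme level-set arc of length less than $g/2$, which cannot contain a geodesic triple; for $R_0$ strictly between $-t$ and $t$ at most two vertices carry that level and $i,j$ always resolve. Your arc and level-set analysis is exactly the right tool for this case split, so the gap is repairable --- but as written the proof of (iii) asserts a false equivalence and the concluding step depends on it. A minor additional slip: on odd cycles the increments of $g$ along $C$ lie in $\{-2,-1,0,1,2\}$, not $\{-2,0,2\}$ (e.g.\ $C_7$ with $i=0$, $j=2$ gives $g(v_3)=2$, $g(v_4)=1$), though the level-set conclusions you draw from it remain valid.
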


\begin{oldthm}\label{thm:unicdim}
\cite{SedlarUnicyclic} Let $G$ be a unicyclic graph. Then $\dim (G)$ is equal to $L(G) + \max \{2-b(G),0\}$ or $L(G) + \max \{2-b(G),0\} + 1$.
\end{oldthm}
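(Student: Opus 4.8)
The plan is to establish the two bounds
$L(G)+\max\{2-b(G),0\}\le \dim(G)\le L(G)+\max\{2-b(G),0\}+1$
separately, relying on Lemma~\ref{lem:Sedlar1} throughout; since these bounds differ by exactly $1$, the value $\dim(G)$ is forced to be one of the two stated integers. Before either bound I would record one structural fact: every branch-active tree contains a ``deepest'' branching vertex all of whose children are threads, so $\ell\ge 2$ there. Hence $b(G)\ge 1$ already implies $L(G)\ge 1$ (and $b(G)=0$ implies $L(G)=0$), which rules out the degenerate combinations that would otherwise make the formula inconsistent with biactivity.

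For the lower bound I would take an arbitrary resolving set $S$ and apply Lemma~\ref{lem:Sedlar1}(i) to conclude that $S$ is a biactive branch-resolving set. Branch-resolving alone forces $|S\cap(\text{threads})|\ge L(G)$: for each vertex $w$ with $\ell(w)>1$ the set $S$ must meet $\ell(w)-1$ of the pairwise disjoint threads at $w$, and summing over all such $w$ yields $L(G)$ distinct thread-vertices. The additive term $\max\{2-b(G),0\}$ then comes from biactivity. If $b(G)\ge 2$ the term is $0$ and the bound $|S|\ge L(G)$ is immediate. If $b(G)=1$, all branching vertices lie in the unique branch-active tree $T_{v_0}$, so the $L(G)$ forced thread-vertices all lie in $T_{v_0}$ and activate only $v_0$; biactivity then forces a further vertex in some other tree $T_{v_j}$, which is disjoint from $T_{v_0}$ and hence new, giving $|S|\ge L(G)+1$. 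If $b(G)=0$ then $L(G)=0$ and biactivity alone forces $|S|\ge 2$.

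For the upper bound I would construct, in each case, a branch-resolving set $S$ possessing three $S$-active cycle vertices forming a geodesic triple, so that Lemma~\ref{lem:Sedlar1}(iv) certifies $S$ as resolving. First I would place the $L(G)$ forced thread-vertices, selecting one of them at a deepest branching vertex of each branch-active tree so that all $b(G)$ branch-active cycle vertices become $S$-active. Adding cycle vertices never destroys the branch-resolving property, so the remaining task is to reach three active vertices forming a geodesic triple while adding as few extra vertices as possible: one extra when $b(G)\ge 2$ (we already have at least two active vertices and complete them to a triple), two extra when $b(G)=1$, and three extra when $b(G)=0$. In every case the number of added vertices is exactly $\max\{2-b(G),0\}+1$, so $|S|=L(G)+\max\{2-b(G),0\}+1$.

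The main obstacle is the geodesic-triple completion step in the upper bound: given the already-activated cycle vertices, one must show a suitable extra cycle vertex (or vertices) can always be chosen so that the three chosen active vertices satisfy $d(v_i,v_j)+d(v_j,v_k)+d(v_k,v_i)=|V(C)|$. This reduces to the elementary fact that any two points on a cycle extend to a geodesic triple via a third point on the longer arc—equivalently, that the three arcs between consecutive chosen vertices can be made to each have length at most $|V(C)|/2$—with the only care being the integrality of vertex positions on short cycles. The remaining verifications (disjointness of threads, and that a deepest-branching forced vertex activates its tree) I expect to be routine. Finally, the reason the theorem pins $\dim(G)$ down only to two consecutive values is precisely that whether a \emph{minimum} biactive branch-resolving set already resolves all cross-component pairs depends on whether its active vertices happen to contain a geodesic triple (Lemma~\ref{lem:Sedlar1}(iii)--(iv)); deciding this dichotomy is the finer analysis left beyond the scope of this two-value statement.
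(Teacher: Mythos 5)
Your proposal is correct, but note that the paper itself contains no proof of this statement: it is Theorem~\ref{thm:unicdim}, imported verbatim from the Sedlar--\v{S}krekovski paper \cite{SedlarUnicyclic}, just as Lemma~\ref{lem:Sedlar1} is. So there is no in-paper argument to compare against; what you have produced is a self-contained reconstruction from Lemma~\ref{lem:Sedlar1}, and it holds up. The two bounds you separate are each sound. For the lower bound, your preliminary bookkeeping is the right move and is verifiable: a deepest branching vertex $w$ of a branch-active component has all its hanging subtrees equal to threads, so $\ell(w)\ge 2$, giving $b(G)\ge 1\Rightarrow L(G)\ge 1$; conversely any $v$ with $\ell(v)>1$ has degree at least $4$ on the cycle or at least $3$ off it, so it is branching, giving $b(G)=0\Rightarrow L(G)=0$. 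Then Lemma~\ref{lem:Sedlar1}\ref{lem:S1:resolvingbranch} plus disjointness of threads at distinct vertices yields $|S|\ge L(G)$, and in the $b(G)=1$ case the $L(G)$ forced thread vertices all sit in the unique branch-active component, so biactivity genuinely costs one more vertex; the case split reproduces $\max\{2-b(G),0\}$ exactly. For the upper bound, the only nontrivial step is the one you flag: that any two distinct cycle vertices extend to a geodesic triple. This is true and elementary --- three vertices form a geodesic triple if and only if the three arcs they cut on $C$ each have length at most $g/2$, and given two vertices at distance $p\le g/2$ one can place the third at distance $\lfloor g/2\rfloor$ along the longer arc (when $p=g/2$ with $g$ even, any interior vertex of either arc works) --- so one, two, or three added cycle vertices suffice according as $b(G)\ge 2$, $b(G)=1$, or $b(G)=0$, and Lemma~\ref{lem:Sedlar1}\ref{lem:S1:branchgeodesic} certifies the resulting set as resolving of size $L(G)+\max\{2-b(G),0\}+1$. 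Your closing remark is also accurate: the residual ambiguity between the two values is precisely what \cite{Sedlar21} resolves via the configurations $\A$, $\B$, $\C$ (Theorem~\ref{thm:abc}), which is the finer analysis the present paper actually uses.
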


In \cite{Sedlar21}, Sedlar and \v{S}krekovski continued their work on unicyclic graphs and their metric dimensions. In order to determine whether the metric dimension includes the $+1$ of Theorem \ref{thm:unicdim}, they introduced three configurations that resolving sets must avoid. These configurations will be very useful in studying the basis forced vertices of unicyclic graphs.

\begin{defin}
Let $G$ be a unicyclic graph with the cycle $C$ of length $g$ and let $S$ be a biactive branch-resolving set in $G$. We say that $C=v_0v_1 \cdots v_{g-1} v_0$ is \emph{canonically} labelled with respect to $S$ if $v_0$ is $S$-active and $k = \max\{ i \ | \ v_i \text{ is } S \text{-active}\}$ is as small as possible.
\end{defin}

\begin{defin}
Let $G$ be a unicyclic graph, and let $S$ be a biactive branch-resolving set in $G$. We say that the graph $G$ with respect to $S$ \emph{contains} configurations:
\begin{itemize}
\item[$\A$:] If $a(S)=2$, $g$ is even, and $k=g/2$.

\item[$\B$:] If $k \leq \lfloor g/2 \rfloor -1$ and there is an $S$-free thread attached to a vertex $v_i$ for some $i \in [k,\lfloor g/2 \rfloor -1] \cup [\lceil g/2 \rceil + k+1, g-1] \cup \{0\}$.

\item[$\C$:] If $a(S)=2$, $g$ is even, $k \leq g/2$ and there is an $S$-free thread of length at least $g/2-k$ attached to a vertex $v_i$ for some $i \in [0,k]$.
\end{itemize}
\end{defin}

\begin{oldthm}\label{thm:abc}\cite{Sedlar21}
Let $G$ be a unicyclic graph and let $S$ be a biactive branch-resolving set in $G$. The set $S$ is a resolving set of $G$ if and only if $G$ does not contain any of the configurations $\A$, $\B$ and $\C$ with respect to $S$.
\end{oldthm}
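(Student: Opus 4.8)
The plan is to reduce the whole question to resolving pairs of vertices that lie in \emph{distinct} components of $G-E(C)$, and then to read off the three configurations as the exact obstructions to doing so with the $S$-active vertices available. The two facts that drive the reduction are the following. Since $S$ is biactive and branch-resolving, Lemma~\ref{lem:Sedlar1}\ref{lem:S1:branchsame} already resolves every pair of vertices inside a single component $T_{v_i}$; hence $S$ is a resolving set if and only if it resolves every pair $x\in T_{v_i}$, $y\in T_{v_j}$ with $i\neq j$. For such a pair and any $s$ lying in an $S$-active component $T_{v_m}$, the shortest path from $s$ to $x$ must pass through $v_m$ and then travel along the cycle to $v_i$, so that $d(s,x)=d(s,v_m)+d_C(v_m,v_i)+d(v_i,x)$, where $d_C$ is cycle distance, and likewise for $y$. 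Therefore $s$ resolves $x,y$ if and only if $d_C(v_m,v_i)+d(v_i,x)\neq d_C(v_m,v_j)+d(v_j,y)$, a quantity that depends on $s$ only through the active vertex $v_m$. The problem thus becomes bookkeeping on the cycle together with the within-component offsets $d(v_i,x)$, and I would fix the canonical labelling so that every $S$-active vertex lies in $\{v_0,\dots,v_k\}$, with $v_0$ and $v_k$ both active and $k$ minimal.

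For the forward direction I would argue by contraposition, exhibiting in each configuration an explicit unresolved pair. In $\A$ (with $a(S)=2$, $g$ even, $k=g/2$) the active vertices $v_0$ and $v_{g/2}$ are antipodal and hence each is equidistant from the mirror vertices $v_i$ and $v_{g-i}$, which are left unresolved. In $\B$ an $S$-free thread at the critical index $v_i$ has an endpoint whose offset makes it collide, as seen from \emph{every} $v_m\in\{v_0,\dots,v_k\}$, with a cycle vertex one step further along the short arc; the index ranges listed in $\B$ are precisely the positions for which this collision occurs simultaneously for all active vertices. In $\C$ the assumption that the $S$-free thread has length at least $g/2-k$ lets its vertex at offset $g/2-k$ wrap around far enough to be equidistant, from both $v_0$ and $v_k$, to a vertex on the far arc; a short computation shows that $g/2-k$ is exactly the wrap-around distance needed, which is why that bound appears verbatim in the configuration.

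For the converse I would assume that none of $\A,\B,\C$ holds and show every cross-component pair is resolved, splitting on the placement of the active vertices. If some three $S$-active vertices form a geodesic triple, Lemma~\ref{lem:Sedlar1}\ref{lem:S1:branchgeodesic} finishes at once. The remaining cases are those admitting no geodesic triple, which under the canonical labelling forces the span $k$ to be at most about $g/2$, placing us in the regime where $\B$ (and, when $a(S)=2$, also $\A$ and $\C$) are the only possible obstructions. Here I would verify directly, using the offset identity above and separating the short arc $[0,k]$ from the long arc $[k,g]$ according to the parity of $g$, that excluding the three configurations eliminates every candidate collision, so that $v_0$, $v_k$ and the other active vertices together resolve all pairs.

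The hard part will be this converse, and specifically its \emph{exhaustiveness}: one must show that the index ranges of $\B$ and the length threshold $g/2-k$ of $\C$ are not merely sufficient for a collision but account for \emph{all} of them, so that outside these configurations no cross-component pair can survive both $v_0$ and $v_k$. This requires a complete enumeration of the positions of $x$ and $y$ on the two arcs and of their within-thread offsets, checking in each case that some active vertex distinguishes them. Keeping the analysis simultaneously tight, so that the thresholds match the configurations exactly, and complete, so that no collision type is overlooked, is the principal obstacle.
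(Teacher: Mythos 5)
First, note that the paper does not prove Theorem~\ref{thm:abc} at all: it is quoted as a known result from \cite{Sedlar21}, so the only meaningful comparison is with the source's argument, which proceeds exactly along the lines you sketch (reduction to cross-component pairs, then a case analysis of collisions on the cycle). Your forward direction is essentially sound and can be made precise with explicit witnesses: for $\A$ the mirror pair $v_i,v_{g-i}$ is equidistant from everything in $T_{v_0}\cup T_{v_{g/2}}$; for $\B$ the first vertex of the $S$-free thread at $v_i$ collides with the adjacent cycle vertex ($v_{i+1}$ or $v_{i-1}$ depending on the range); and your ``wrap-around'' computation for $\C$ is correct --- the thread vertex at depth $g/2-k$ below $v_i$ is equidistant with $v_{g/2+k-i}$ from both $v_0$ and $v_k$, which is precisely why the threshold $g/2-k$ appears. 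One caveat you must add: the factorization $d(s,x)=d(s,v_m)+d_C(v_m,v_i)+d(v_i,x)$ is false when $s$ lies in the \emph{same} component $T_{v_i}$ as $x$, since the meet of $s$ and $x$ in the tree need not be $v_i$. In $\B$ and $\C$ this is harmless only because the offending thread is $S$-free and attached directly to the cycle vertex, forcing the meet to be $v_i$; the forward direction requires checking that \emph{no} vertex of $S$, including same-component ones, resolves the exhibited pair, and your blanket claim that distances ``depend on $s$ only through $v_m$'' quietly skips this.

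The genuine gap is the converse, which is the substantive half of the theorem. You correctly dispose of the case where three $S$-active vertices form a geodesic triple via Lemma~\ref{lem:Sedlar1}\ref{lem:S1:branchgeodesic}, but for the remaining cases you only announce the plan --- enumerate all cross-component collision candidates and check that excluding $\A$, $\B$, $\C$ kills each one --- and then explicitly defer it, conceding that exhaustiveness is ``the principal obstacle.'' That exhaustiveness \emph{is} the theorem: one must run the full analysis over the positions of $x$ and $y$ on the two arcs, their thread depths, the parity of $g$, and separately the regimes $a(S)=2$ (where $\A$ and $\C$ are in play) and $a(S)\geq 3$ without a geodesic triple (where $\B$ is the only available obstruction and resolution must still be proved). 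None of this is carried out, so as written the proposal establishes only the ``only if'' direction together with a road map for the ``if'' direction; it is a plausible reconstruction of the strategy of \cite{Sedlar21}, but not a proof.
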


The following theorem follows directly from (or is a reformulation of) the results obtained in \cite{BasisForced}.

\begin{oldthm}\label{thm:basisforced}
Let $v \in V(G)$ be a basis forced vertex of a unicyclic graph $G$. Then either
\begin{enumerate}[label=\emph{(\roman*)}]
\item $v = v_i$ for some $v_i \in V(C)$ and $V(T_{v_i}) = \{v_i\}$ or
\item $v$ is a pendant attached to some $v_i \in V(C)$ and $V(T_{v_i}) = \{v_i, v\}$
\end{enumerate}
\end{oldthm}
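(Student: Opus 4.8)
The plan is to prove the contrapositive: if $v$ is neither a thread-free cycle vertex (case (i)) nor the lone pendant of a single-pendant cycle vertex (case (ii)), then $v$ is not basis forced, and I will witness this by producing a metric basis of $G$ that omits $v$. Throughout, the engine is the combination of Lemma~\ref{lem:Sedlar1} and Theorem~\ref{thm:abc}: a biactive branch-resolving set of the right cardinality is a metric basis exactly when it avoids the configurations $\A$, $\B$, $\C$, and, crucially, those three configurations only ever refer to $S$-free threads hanging \emph{directly} from the cycle. Starting from an arbitrary metric basis $R \ni v$ (which exists by the forcing hypothesis), I perform a cardinality-preserving swap $R[v \leftarrow u]$ and check that biactivity and branch-resolving survive while no new forbidden configuration appears; Theorem~\ref{thm:abc} then certifies that the swapped set is again a metric basis, now without $v$.

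First I would dispose of the \emph{interior} positions. If $v$ lies in a component $T_{v_j}$ but is neither a leaf nor the attachment vertex $v_j$, I replace it by a leaf $u$ of $T_{v_j}$ lying beyond $v$ (away from the cycle). Since $u$ sits in the same thread or branch as $v$, the swap keeps every branching vertex covered (so branch-resolving persists, by the accounting behind $L(G)$), keeps $v_j$ $S$-active (so biactivity and the canonical label $k$ are unchanged), and cannot create a new $S$-free cycle-thread, while the within-component pairs are handled automatically by Lemma~\ref{lem:Sedlar1}\ref{lem:S1:branchsame}. An analogous swap handles a leaf $v$ at the end of a thread of length $\geq 2$: I move the basis vertex \emph{inward} along its own thread, which leaves $a(S)$, $k$, and the free/non-free status of every thread untouched, so the configuration analysis is literally unchanged. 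This already eliminates every candidate except genuine pendants and cycle vertices themselves.

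Next I would treat a cycle vertex $v = v_j$ with $T_{v_j}\neq\{v_j\}$. Here I swap $v_j$ for a vertex $u \in T_{v_j}\setminus R$ inside one of its threads: a cycle vertex sitting in $R$ does no work for branch-resolving and, if it was the only element of $R$ in $T_{v_j}$, it actually leaves that thread $S$-free, so replacing it by a thread vertex preserves the $S$-activeness of $v_j$ while strictly \emph{reducing} the supply of $S$-free threads. Thus biactivity and branch-resolving survive and no configuration can be newly triggered, yielding a metric basis without $v_j$. The same bookkeeping disposes of a pendant hanging on a non-cycle branching vertex: I cover a sibling thread instead of $v$'s, and the freshly created $S$-free thread, being attached away from $C$, is invisible to $\A$, $\B$, $\C$.

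The step I expect to be the genuine obstacle is a pendant $v$ on a cycle vertex $v_j$ with $\ell(v_j)\geq 2$. Dropping $v$ forces its length-one thread to be $S$-free, and because index $0$ lies in the trigger range of $\B$, this threatens to switch on a forbidden configuration. The way around it is to abandon the configuration route and instead realize an alternative metric basis with three $S$-active vertices forming a geodesic triple: covering the $\ell(v_j)-1$ sibling threads keeps $v_j$ active, a suitable pair of further cycle vertices completes the triple, and Lemma~\ref{lem:Sedlar1}\ref{lem:S1:branchgeodesic} then certifies resolvability outright, with no configuration to check and with $v$ excluded; the delicate point requiring care is verifying that this can be done within the cardinality $\dim(G)$ afforded by Theorem~\ref{thm:unicdim}. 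It is precisely the single-pendant hypothesis of case (ii) (and the thread-freeness of case (i)) that removes this maneuvering room — no sibling thread to cover, no slack to spend on a third active vertex — so those vertices, and only those, can be genuinely forced. (Alternatively, and far more briefly, the statement can be read off directly from the characterisation of basis forced vertices in \cite{BasisForced}, of which it is a translation into the $T_{v_i}$ language of \cite{SedlarUnicyclic}.)
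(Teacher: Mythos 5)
You should first note what the paper actually does here: Theorem~\ref{thm:basisforced} is stated with \emph{no proof} -- the paper says it ``follows directly from (or is a reformulation of) the results obtained in \cite{BasisForced}''. So your closing parenthetical remark is exactly the paper's route, while your main argument -- a self-contained derivation via cardinality-preserving swaps certified by Lemma~\ref{lem:Sedlar1} and Theorem~\ref{thm:abc} -- is a genuinely different one. Your first rounds of swaps are sound: your key structural observation that the configurations $\A$, $\B$, $\C$ only see $S$-free threads attached to \emph{cycle} vertices is correct, and it does dispose of interior vertices of components, leaves of long threads, cycle vertices $v_j$ with $T_{v_j}\neq\{v_j\}$, and pendants hanging off non-cycle branching vertices (modulo small glossed technicalities, e.g.\ the availability of a replacement $u\notin R$, which can be settled by minimality of $R$).

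However, there is a genuine gap at the case you yourself flag as the obstacle -- a pendant $v$ at a cycle vertex $v_j$ with $V(T_{v_j})\neq\{v_j,v\}$ -- and your proposed escape does not work as sketched. Building a basis with $\ell(v_j)-1$ sibling-thread coverers plus ``a suitable pair of further cycle vertices'' completing a geodesic triple costs $L(G)+2$ vertices, whereas $\dim(G)=L(G)+1$ whenever $b(G)=1$ (Theorems~\ref{thm:unicdim} and~\ref{thm:bfmdim}) -- and $b(G)=1$ is precisely a regime where basis forced pendants exist (Figure~\ref{fig:branching}). So the ``delicate point'' you defer is not a routine verification; for the from-scratch construction it actually fails. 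The repair is to stay within one-for-one swaps and split into two sub-cases. If some thread at $v_j$ is $R$-free, swap $v$ for the end-vertex of that free thread: the status ``free thread at $v_j$'' already existed and was tolerated by the basis $R$, and since configuration $\B$ is blind to thread length while the length threshold in configuration $\C$ is monotone (and $v$'s thread has the minimal length $1$), no new configuration can arise. If instead every thread at $v_j$ meets $R$, then $R\setminus\{v\}$ is still branch-resolving, and you may swap $v$ for a single cycle vertex chosen so that three $S$-active vertices form a geodesic triple, after which Lemma~\ref{lem:Sedlar1}\ref{lem:S1:branchgeodesic} certifies resolvability; this is a one-for-one exchange, so the cardinality worry dissolves. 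With that case analysis supplied, your swap-based proof goes through and is arguably more informative than the paper's citation, since it exhibits explicit alternative bases witnessing non-forcedness.
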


The following result was obtained in \cite{BasisForced} as a corollary of results in \cite{PoissonUnicyclic} and \cite{SedlarUnicyclic}.

\begin{oldthm}\label{thm:basisforced2}
Let $G$ be a unicyclic graph. Then $G$ contains at most two basis forced vertices.
\end{oldthm}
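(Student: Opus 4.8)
The plan is to argue by contradiction: assume $G$ has three basis forced vertices $b_1,b_2,b_3$, and, starting from an arbitrary metric basis that contains all three, construct another metric basis of the same cardinality that omits one of them.

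First I would pin down the location of the forced vertices using Theorem~\ref{thm:basisforced}. Each $b_j$ either equals a cycle vertex $v_{i_j}$ with $V(T_{v_{i_j}})=\{v_{i_j}\}$, or is the unique pendant of a cycle vertex $v_{i_j}$ with $V(T_{v_{i_j}})=\{v_{i_j},b_j\}$. A short check shows the $v_{i_j}$ are pairwise distinct: two forced vertices cannot share a component $T_{v_i}$, since in either case that component contains at most the two listed vertices, and a cycle vertex is never of the pendant type. In particular each $v_{i_j}$ has $\ell(v_{i_j})\le 1$, so it is neither branching nor branch-active and contributes nothing to $L(G)$. Hence none of $b_1,b_2,b_3$ is demanded by the branch-resolving requirement — there one always has a free choice of which thread, and which vertex of a chosen thread, to take — so their only possible role in a metric basis is as \emph{cycle controllers}: to supply $S$-activity at their positions and thereby secure biactivity and the avoidance of the configurations $\A$, $\B$, $\C$ of Theorem~\ref{thm:abc}.

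Next, fix a metric basis $S$; by assumption $b_1,b_2,b_3\in S$, so $v_{i_1},v_{i_2},v_{i_3}$ are three $S$-active cycle vertices. The goal is to find a vertex $w$ and an index $j$ so that $S'=S[b_j\leftarrow w]$ is again a metric basis, contradicting that $b_j$ is forced. By Theorem~\ref{thm:abc} together with Lemma~\ref{lem:Sedlar1}\ref{lem:S1:resolvingbranch}, the conditions on $S'$ reduce to being branch-resolving (automatic, since the swap leaves the $L(G)$ thread-vertices untouched), biactive, and free of $\A$, $\B$, $\C$; moreover, by Lemma~\ref{lem:Sedlar1}\ref{lem:S1:branchgeodesic}, if the active vertices of $S'$ contain a geodesic triple then $S'$ is automatically resolving. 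The two forcing mechanisms that must be broken are exactly (i) $v_{i_j}$ being the only vertex able to activate position $i_j$ (when $V(T_{v_{i_j}})=\{v_{i_j}\}$), and (ii) $b_j$ being the only vertex able to occupy the single thread at $v_{i_j}$ that must be non-$S$-free to avoid $\B$ or $\C$. Since any cycle vertex can be activated simply by placing it in $S$, and since three active vertices over-determine biactivity, the strategy is to reroute one obligation — move the third active vertex to a neighbouring admissible position, or relocate a thread-hitter — so that the new active set still contains a geodesic triple and still avoids all three configurations.

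The main obstacle is proving that such a reroute is always available, that is, that at most two of the three positions can be genuinely rigid. This is where the combinatorics of $\A$, $\B$, $\C$ must be worked out case by case, according to $b(G)\in\{0,1,\ge 2\}$, the parity of the girth $g$, and the cyclic positions of $v_{i_1},v_{i_2},v_{i_3}$. When $b(G)\ge 2$, biactivity is already free and the formula of Theorem~\ref{thm:unicdim} leaves essentially one controllable slot, so three rigid forced vertices are excluded almost at once; the genuinely hard regime is $b(G)=0$ together with the ``$+1$'' term, where up to three active cycle vertices are really needed. There one must match the forbidden index ranges of $\B$ and the length condition of $\C$ against the three positions and show that the ``dangerous'' arc always contains an alternative admissible vertex — or that the length-$1$ pendant of a type-(ii) vertex can be dispensed with by reshaping the geodesic triple — thereby freeing at least one $b_j$. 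Carrying out this position-and-parity bookkeeping, while confirming that the swap reintroduces none of $\A$, $\B$, $\C$, is the technical heart of the argument.
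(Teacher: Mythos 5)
First, note what you are comparing against: the paper does not prove this statement at all. Theorem~\ref{thm:basisforced2} is imported verbatim from \cite{BasisForced}, where it was obtained as a corollary of results of \cite{PoissonUnicyclic} and \cite{SedlarUnicyclic}. So the only question is whether your proposal stands on its own as a proof, and it does not: it is a plan with the decisive step missing. You set up the framework correctly (Theorem~\ref{thm:basisforced} to localize the three hypothetical forced vertices, the observation that none of them contributes to $L(G)$, and the swap strategy $S'=S[b_j\leftarrow w]$ verified through Theorem~\ref{thm:abc}), but the core claim --- that at most two of the three positions can be ``rigid,'' i.e.\ that a configuration-free swap always exists --- is exactly the theorem in disguise, and your last paragraph explicitly defers it (``the technical heart of the argument'') to an unexecuted position-and-parity case analysis over $\A$, $\B$, $\C$. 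As written, the proposal would not be accepted as a proof.

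The gap is also avoidable, because the machinery you already invoke yields a short counting proof, and your assessment of where the difficulty lies is inverted. Since by Theorem~\ref{thm:basisforced} no forced vertex lies on a thread counted by $L(G)$, every metric basis contains a minimum branch-resolving part of size $L(G)$ disjoint from the forced vertices, so $\dim(G)\ge L(G)+f$ where $f$ is the number of forced vertices (this is Observation~\ref{obs:mdim} in the paper, proved from Theorems~\ref{thm:unicdim} and~\ref{thm:basisforced} alone). Theorem~\ref{thm:unicdim} gives $\dim(G)\le L(G)+\max\{2-b(G),0\}+1$, whence $f\le\max\{2-b(G),0\}+1$: this already gives $f\le 2$ unless $b(G)=0$. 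In the remaining case $f=3$ forces $b(G)=0$, hence $L(G)=0$ and $\dim(G)=3$; but then every set is branch-resolving, and by Lemma~\ref{lem:Sedlar1}\ref{lem:S1:branchgeodesic} \emph{any} three cycle vertices forming a geodesic triple constitute a metric basis. Such triples can be chosen to avoid any prescribed vertex (for the pendant-type forced vertices trivially, and for a cycle-type one because $g\ge 4$ leaves room to shift the triple), contradicting that all three vertices are forced. So the regime you singled out as ``genuinely hard'' --- $b(G)=0$ with the $+1$ term --- is in fact the easy one, precisely because branch-resolving is vacuous there and Lemma~\ref{lem:Sedlar1}\ref{lem:S1:branchgeodesic} applies directly; no bookkeeping against configurations $\A$, $\B$, $\C$ is needed anywhere in the argument. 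This is the same argument pattern the paper later uses in Theorem~\ref{thm:bfmdim} and Lemma~\ref{lem:twoforced}.
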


A unicyclic graph with two basis forced vertices is illustrated in Figure~\ref{fig:siili}, and two unicyclic graphs with one basis forced vertex are illustrated in Figures~\ref{fig:vajaasiili} and~\ref{fig:branching}. These graphs also demonstrate that the two options for the placement of a basis forced vertex established in Theorem~\ref{thm:basisforced} are indeed possible.

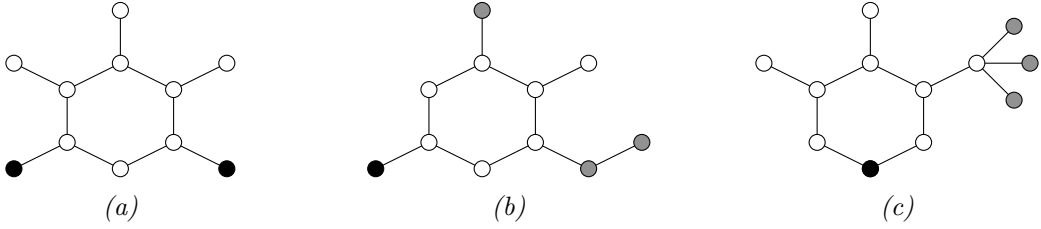
\begin{figure}
	\centering
	\begin{subfigure}[b]{.32\linewidth}
		\centering
		\begin{tikzpicture}[scale=.7]
			\draw (0,0) -- (1,.5) -- (1,1.5) -- (0,2) -- (-1,1.5) -- (-1,.5) -- (0,0);
			\draw  (1,.5) -- (2,0)
			(-1,.5) -- (-2,0)
			(1,1.5) -- (2,2)
			(-1,1.5) -- (-2,2)
			(0,2) -- (0,3);
			\draw \foreach \x in {(0,0),(1,.5),(-1,.5),(1,1.5),(-1,1.5),(0,2),(2,2),(-2,2),(0,3)} {
				\x node[circle, draw, fill=white,
				inner sep=0pt, minimum width=6pt] {}
			};
			\draw \foreach \x in {(2,0),(-2,0)} {
				\x node[circle, draw, fill=black,
				inner sep=0pt, minimum width=6pt] {}
			};
		\end{tikzpicture}
		\caption{ }\label{fig:siili}
	\end{subfigure}
	\begin{subfigure}[b]{.32\linewidth}
		\centering
		\begin{tikzpicture}[scale=.7]
			\draw (0,0) -- (1,.5) -- (1,1.5) -- (0,2) -- (-1,1.5) -- (-1,.5) -- (0,0);
			\draw  (1,.5) -- (2,0) -- (3,.5)
			(-1,.5) -- (-2,0)
			(1,1.5) -- (2,2)
			(0,2) -- (0,3);
			\draw \foreach \x in {(0,0),(1,.5),(-1,.5),(1,1.5),(-1,1.5),(0,2),(2,2)} {
				\x node[circle, draw, fill=white,
				inner sep=0pt, minimum width=6pt] {}
			};
			\draw \foreach \x in {(-2,0)} {
				\x node[circle, draw, fill=black,
				inner sep=0pt, minimum width=6pt] {}
			};
			\draw \foreach \x in {(2,0),(0,3),(3,.5)} {
				\x node[circle, draw, fill=gray!85,
				inner sep=0pt, minimum width=6pt] {}
			};
		\end{tikzpicture}
		\caption{ }\label{fig:vajaasiili}
	\end{subfigure}
	\begin{subfigure}[b]{.32\linewidth}
		\centering
		\begin{tikzpicture}[scale=.7]
			\draw (0,0) -- (1,.5) -- (1,1.5) -- (0,2) -- (-1,1.5) -- (-1,.5) -- (0,0);
			\draw (1,1.5) -- (2,2) -- (3,2)
			(2.7,1.3) -- (2,2) -- (2.7,2.7)
			(0,2) -- (0,3)
			(-1,1.5) -- (-2,2);
			\draw \foreach \x in {(0,0),(1,.5),(-1,.5),(1,1.5),(-1,1.5),(0,2),(2,2),(-2,2),(0,3)} {
				\x node[circle, draw, fill=white,
				inner sep=0pt, minimum width=6pt] {}
			};
			\draw (0,0) node[circle, draw, fill=black,
			inner sep=0pt, minimum width=6pt] {};
			\draw \foreach \x in {(3,2),(2.7,1.3),(2.7,2.7)} {
				\x node[circle, draw, fill=gray!85,
				inner sep=0pt, minimum width=6pt] {}
			};
		\end{tikzpicture}
		\caption{ }\label{fig:branching}
	\end{subfigure}
	\caption{Three examples of unicyclic graphs that contain basis forced vertices. The black vertices are basis forced vertices, the gray vertices are in some metric bases but not all, and the white vertices are not in any metric basis.}\label{fig:threetypes}
\end{figure}

\subsection{The Structure of Unicyclic Graphs With Basis Forced Vertices}

The following observation is clear by Theorems \ref{thm:unicdim} and \ref{thm:basisforced}.

\begin{obs}\label{obs:mdim}
Let $G$ be a unicyclic graph that contains $f$ basis forced vertices. If the set $R \subseteq V(G)$ is a minimum branch-resolving set of $G$, then $R$ contains no basis forced vertices. Consequently, $\dim (G) \geq L(G) + f$.
\end{obs}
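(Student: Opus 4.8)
The plan is to combine the structural restriction on basis forced vertices from Theorem~\ref{thm:basisforced} with the defining condition of a branch-resolving set. First I would record that the minimum possible cardinality of a branch-resolving set is exactly $L(G)$: for each vertex $v$ with $\ell(v)\geq 2$ one must include a vertex on at least $\ell(v)-1$ of the $\ell(v)$ threads hanging from $v$, and since distinct threads are vertex-disjoint these demands are independent and are met with equality by selecting a single vertex (for instance a leaf) on each of $\ell(v)-1$ chosen threads. Hence a \emph{minimum} branch-resolving set $R$ satisfies $|R|=L(G)$, and every element of $R$ lies on a thread attached to a vertex having at least two threads. (This $L(G)$ is precisely the base term appearing in Theorem~\ref{thm:unicdim}.)

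Next I would apply Theorem~\ref{thm:basisforced} to see that a basis forced vertex can never be such an element. A basis forced vertex is either of type (i), a cycle vertex $v_i$ with $V(T_{v_i})=\{v_i\}$, so that $\ell(v_i)=0$ and the vertex lies on no thread whatsoever; or of type (ii), a pendant $v$ attached to a cycle vertex $v_i$ with $V(T_{v_i})=\{v_i,v\}$, so that $\ell(v_i)=1$. In case (ii) the vertex does sit on a thread, but on the \emph{only} thread of $v_i$, and since $\ell(v_i)-1=0$ that thread is never required by the branch-resolving condition. In both cases a basis forced vertex contributes nothing toward satisfying the branch-resolving requirement.

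The first assertion then follows by minimality: were a minimum branch-resolving set $R$ to contain a basis forced vertex $b$, the previous paragraph shows $b$ is not needed for any of the threshold conditions, so $R\setminus\{b\}$ would still be branch-resolving yet of size $L(G)-1<L(G)$, contradicting the minimality of $R$. Thus $R$ contains no basis forced vertices.

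For the consequence I would take an arbitrary metric basis $B^*$ of $G$. By Lemma~\ref{lem:Sedlar1}(i) it is branch-resolving, and by definition it contains all $f$ basis forced vertices; call this set $B$. Deleting $B$ from $B^*$ destroys none of the branch-resolving requirements (again by the second paragraph), so $B^*\setminus B$ is still branch-resolving and hence has at least $L(G)$ vertices. Since $B$ and $B^*\setminus B$ are disjoint, $\dim(G)=|B^*|=|B^*\setminus B|+f\geq L(G)+f$. I expect the only delicate point to be the bookkeeping with threads, namely verifying that a basis forced vertex is never one of the $\ell(v)-1$ mandatory thread-vertices; this is exactly what the conditions $\ell(v_i)=0$ and $\ell(v_i)=1$ forced by Theorem~\ref{thm:basisforced} guarantee.
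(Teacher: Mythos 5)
Your proposal is correct and takes essentially the same route the paper intends: the paper offers no proof, asserting the observation is ``clear by Theorems~\ref{thm:unicdim} and~\ref{thm:basisforced},'' and your argument is precisely the natural filling-in of that claim. In particular, your key steps --- Theorem~\ref{thm:basisforced} forces $\ell(v_i)\in\{0,1\}$ at a basis forced vertex, so such a vertex covers no branch-resolving requirement; the pairwise disjointness of threads gives that every minimum branch-resolving set has cardinality exactly $L(G)$; and Lemma~\ref{lem:Sedlar1}\ref{lem:S1:resolvingbranch} guarantees every metric basis is branch-resolving, so removing the $f$ forced vertices from a metric basis leaves a branch-resolving set of size at least $L(G)$ --- are exactly the ingredients the authors take for granted.
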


Recall that $g$ is the length of the cycle $C$ in $G$, \emph{i.e.} $g$ is the girth of $G$.

\begin{thm}\label{thm:even}
Let $G$ be a unicyclic graph with at least one basis forced vertex. Then $G$ has even girth $g$.
\end{thm}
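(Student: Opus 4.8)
\section*{Proof proposal}

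The plan is to argue by contradiction: assume $g$ is odd and that $G$ has a basis forced vertex $v$, and then exhibit a metric basis of $G$ that avoids $v$, contradicting the definition. First I would locate $v$ by means of Theorem~\ref{thm:basisforced}: the vertex $v$ lies in a component $T_{v_i}$ of $G-E(C)$ that is either $\{v_i\}$ (case (i)) or $\{v_i,v\}$ with $v$ a pendant (case (ii)). In both cases $\ell(v_i)\le 1$, so $v_i$ is neither branching nor branch-active, and the branch-resolving requirement imposes no obligation inside $T_{v_i}$ (for case (ii), $\deg(v_i)=3$ but $\ell(v_i)-1=0$). By Lemma~\ref{lem:Sedlar1}\ref{lem:S1:branchsame} the pair $v_i,v$ is already resolved by any biactive branch-resolving set, so the only structural role played by $v$ in a metric basis $R\ni v$ is to make $v_i$ an $R$-active vertex. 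Moreover, by Observation~\ref{obs:mdim}, $v$ belongs to no minimum branch-resolving set, so $v$ is genuinely an \emph{extra} vertex added to satisfy the biactivity and geodesic-triple conditions of Lemma~\ref{lem:Sedlar1}\ref{lem:S1:branchgeodesic}.

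The leverage of odd girth comes through Theorem~\ref{thm:abc}. Since configurations $\A$ and $\C$ both require $g$ to be even, neither can occur when $g$ is odd. Consequently, for \emph{every} biactive branch-resolving set $S$ the set $S$ is a resolving set of $G$ if and only if $G$ does not contain configuration $\B$ with respect to $S$. This strips away the antipodal rigidity present for even cycles and reduces the whole resolvability test to a single condition, which constrains only the largest active index $k$ relative to $\lfloor g/2\rfloor$ together with the location of the $S$-free threads.

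Next I would fix a metric basis $R$ containing $v$ and canonically label $C$ with respect to $R$, so that $v_0$ is $R$-active and $k=\max\{i\mid v_i \text{ is } R\text{-active}\}$ is as small as possible. The aim is to build $R'$ from $R$ by deleting $v$ and inserting a single vertex that keeps $R'$ biactive, branch-resolving, and free of configuration $\B$; then $R'$ is a resolving set of the same cardinality avoiding $v$, the desired contradiction. The relocation exploits the odd parity: the cycle positions at which an added active vertex either forces $k\ge\lfloor g/2\rfloor$ or sweeps every $R$-free thread out of the forbidden index set $[k,\lfloor g/2\rfloor-1]\cup[\lceil g/2\rceil+k+1,g-1]\cup\{0\}$ of $\B$ form a nondegenerate range, and because no vertex of an odd cycle has a genuine antipode this range necessarily contains a position different from $v_i$. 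Re-seating the active slot occupied by $v$ onto such a position yields $R'$.

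The main obstacle is the construction and verification of $R'$, which must be organised as a case analysis according to the number $a(R)$ of $R$-active vertices (equivalently, which regime of Theorem~\ref{thm:unicdim} applies) and according to whether $v$ falls under case (i) or case (ii) of Theorem~\ref{thm:basisforced}. The delicate boundary cases are those where $k$ equals exactly $\lfloor g/2\rfloor-1$, or where the $R$-free threads sit at the extreme indices occurring in $\B$; there one must check that the odd-parity gap between $\lfloor g/2\rfloor$ and $\lceil g/2\rceil$ leaves enough room to reposition the active vertex without creating a new $S$-free thread in a forbidden spot. This is precisely the step that collapses for even $g$, where configuration $\A$ (with $k=g/2$) or $\C$ re-enters and can pin the extra vertex to a unique admissible location, thereby permitting a forced vertex to exist; hence the parity hypothesis is used in an essential way.
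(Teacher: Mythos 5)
You have correctly identified the paper's key leverage point: for odd $g$, configurations $\A$ and $\C$ are vacuous, so by Theorem~\ref{thm:abc} a biactive branch-resolving set is resolving if and only if it avoids configuration $\B$. However, what you submit is a plan rather than a proof, and the plan's central step is exactly where the gap sits. Your argument hinges on the claim that after deleting $v$ from a metric basis $R$ there is always an admissible reinsertion position --- that the positions which ``either force $k\ge\lfloor g/2\rfloor$ or sweep every $R$-free thread out of the forbidden index set'' form a ``nondegenerate range'' containing a position different from $v_i$. This is asserted, not proved; indeed you then explicitly defer ``the construction and verification of $R'$'' as ``the main obstacle,'' to be organised as a case analysis over $a(R)$ and over the boundary cases $k=\lfloor g/2\rfloor-1$ and extremal thread positions. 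Since that claim is essentially the theorem itself, the proposal as written does not establish the result.

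For comparison, the paper closes this step without any analysis of thread positions, and this is worth internalising. Instead of modifying an arbitrary metric basis containing $v$, it starts from a minimum branch-resolving set $R$ of cardinality $L(G)$ (which by Observation~\ref{obs:mdim} contains no basis forced vertices) and builds metric bases from scratch. When $b(G)\ge 1$ and the canonical $k<\lfloor g/2\rfloor$, it shows that \emph{both} $R\cup\{u_{\lfloor g/2\rfloor}\}$ and $R\cup\{u_{\lfloor g/2\rfloor+1}\}$ are metric bases: the odd parity of $g$ guarantees that under either choice the canonical relabelling yields $k=\lfloor g/2\rfloor$, so the hypothesis $k\le\lfloor g/2\rfloor-1$ of configuration $\B$ fails outright --- no examination of where the $S$-free threads lie is ever needed. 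Since these two bases differ in the added vertex and $R$ is forced-vertex-free, no vertex lies in every metric basis. The case $b(G)=0$ is handled separately by the two-element bases $\{u_i,u_j\}$ with $d(v_i,v_j)=\lfloor g/2\rfloor$, again chosen in multiple ways. If you want to rescue your exchange-based route, the fix is to reinsert the new vertex at a component whose canonical index becomes $\lfloor g/2\rfloor$ (possible precisely because $g$ is odd), which trivialises the $\B$-check; but as submitted, the delicate verification you flag is missing, and with it the proof.
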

\begin{proof}
Let $G$ be a unicyclic graph with odd girth $g$. According to Theorem~\ref{thm:abc}, any biactive branch-resolving set with respect to which $G$ does not contain configuration $\B$ is a resolving set of $G$.

For a vertex $v_i$ of the cycle $C$ that is not branch-active, let $u_i \in V(T_{v_i})$ be the endvertex of the thread attached to $v_i$ if such exists or $u_i=v_i$ if no such thread exists.

Suppose that $b(G)=0$. Let $v_i$ and $v_j$ be such that $d(v_i,v_j) = \lfloor \frac{g}{2} \rfloor$. Now the set $S = \{u_i,u_j\}$ is a metric basis of $G$. Indeed, it is clearly biactive and branch-resolving. Moreover, the graph $G$ does not contain configuration $\B$ with respect to the set $S$, because if $C$ is canonically labelled with respect to the set $S$, then $k = \lfloor \frac{g}{2} \rfloor$. Thus, the set $S$ is a resolving set of $G$ due to Theorem~\ref{thm:abc}. Clearly, the graph $G$ cannot have a smaller resolving set, and thus the set $S$ is a metric basis of $G$. Since $v_i$ and $v_j$ can be chosen in multiple ways, the graph $G$ does not contain basis forced vertices.

Suppose then that $b(G) \geq 1$. Let $R$ be a branch-resolving set of cardinality $L(G)$. Let $C$ be canonically labelled with respect to the set $R$. Notice that the $R$-active vertices are exactly the branch-active vertices on the cycle. If $k \geq \lfloor \frac{g}{2} \rfloor$, then the set $R$ is a metric basis of $G$ due to Lemma~\ref{lem:Sedlar1}\ref{lem:S1:branchgeodesic}, and the graph $G$ does not contain basis forced vertices due to Observation~\ref{obs:mdim}. Suppose that $k < \lfloor \frac{g}{2} \rfloor$. Suppose to the contrary that $G$ contains at least one basis forced vertex. Now $\dim (G) \geq L(G) + 1$ due to Observation~\ref{obs:mdim}. However, now both sets $R \cup \{u_{\lfloor \frac{g}{2} \rfloor}\}$ and $R \cup \{u_{\lfloor \frac{g}{2} \rfloor + 1}\}$ are metric bases of $G$ according to Theorem~\ref{thm:abc}. The vertices $u_{\lfloor \frac{g}{2} \rfloor}$ and $u_{\lfloor \frac{g}{2} \rfloor + 1}$ are clearly not basis forced vertices, and the set $R$ cannot contain basis forced vertices due to Observation~\ref{obs:mdim}. Consequently, the graph $G$ does not contain basis forced vertices (a contradiction).

In conclusion, if the graph $G$ has odd girth, then it does not contain any basis forced vertices. Thus, if the graph $G$ contains basis forced vertices, then the girth is even.
\end{proof}

\begin{lem}\label{lem:nobfs}
Let $G$ be a unicyclic graph with $g\geq 4$. If $\dim (G) \geq L(G) + 2$ and $b(G) \geq 1$, then the graph $G$ does not contain any basis forced vertices.
\end{lem}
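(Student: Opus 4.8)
The plan is to first use Theorem~\ref{thm:unicdim} to pin down the parameters, then read off the coarse structure of $G$, and finally, assuming a basis forced vertex existed, to build an explicit metric basis avoiding it.

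First I would note that the hypotheses force $b(G)=1$ and $\dim(G)=L(G)+2$. Indeed, if $b(G)\ge 2$ then $\max\{2-b(G),0\}=0$, so Theorem~\ref{thm:unicdim} gives $\dim(G)\le L(G)+1$, contradicting $\dim(G)\ge L(G)+2$; and if $b(G)=1$ the only value permitted by Theorem~\ref{thm:unicdim} that is at least $L(G)+2$ is $L(G)+2$ itself. Let $v_p$ be the unique branch-active vertex on $C$. The structural point I would establish next is that every vertex $v$ with $\ell(v)>1$ lies in $T_{v_p}$: such a $v$ has degree at least $3$ (at least $4$ when $v\in V(C)$), hence is a branching vertex, so the component of $G-E(C)$ containing it is branch-active and therefore equals $T_{v_p}$. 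Consequently every thread counted by $L(G)$ sits inside $T_{v_p}$, so I may choose a minimum branch-resolving set $R$ with $R\subseteq V(T_{v_p})$, $|R|=L(G)$, and with $v_p$ as its only $R$-active vertex. A leaf-most branching vertex of $T_{v_p}$ has at least two thread-children, so $L(G)\ge 1$; thus $R\ne\emptyset$ and $v_p$ is genuinely $R$-active.

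Now I would argue by contradiction: suppose $G$ has a basis forced vertex $v^\ast$. By Theorem~\ref{thm:basisforced}, either $v^\ast=v_j$ with $V(T_{v_j})=\{v_j\}$, or $v^\ast$ is a pendant attached to some $v_j$ with $V(T_{v_j})=\{v_j,v^\ast\}$. In both cases $T_{v_j}$ has no branching vertex, so $v_j$ is not branch-active and hence $j\ne p$. I would then choose two cycle vertices $v_a,v_b$ with $a,b\notin\{p,j\}$ and $a\ne b$ so that $v_p,v_a,v_b$ form a geodesic triple on $C$, take $u_a,u_b$ to be endvertices of threads attached to $v_a,v_b$ (or $v_a,v_b$ themselves if those trees are trivial), and put $S=R\cup\{u_a,u_b\}$. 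Because $a,b\ne p$ and $a\ne b$, the vertices $u_a$ and $u_b$ lie in components distinct from one another and from $T_{v_p}$, so $|S|=L(G)+2=\dim(G)$. Since $S\supseteq R$ it is branch-resolving, and it has exactly the three $S$-active vertices $v_p,v_a,v_b$, which form a geodesic triple; hence $S$ is a resolving set by Lemma~\ref{lem:Sedlar1}\ref{lem:S1:branchgeodesic}, and therefore a metric basis. As $p,a,b$ all differ from $j$, the set $S$ meets only $T_{v_p},T_{v_a},T_{v_b}$ and so avoids $V(T_{v_j})$, which contains $v^\ast$; this exhibits a metric basis not containing $v^\ast$, contradicting that $v^\ast$ is basis forced. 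The same argument applies to any basis forced vertex, since it only ever needs to avoid a single component $T_{v_j}$ at a time.

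The main obstacle I expect is the elementary but slightly fiddly combinatorial claim that through a given vertex $v_p$ of a cycle of length $g\ge 4$ there is a geodesic triple avoiding one other prescribed vertex $v_j$. After rotating so that $p=0$, I would use the family of triples $\{v_0,v_i,v_{i+\lfloor g/2\rfloor}\}$, which are geodesic for $i$ in a suitable range and in which each nonzero vertex appears in at most one triple; since this family contains at least two triples when $g\ge 5$, one of them avoids $v_j$, and the single case $g=4$ is settled by hand using $\{v_0,v_1,v_2\}$, $\{v_0,v_1,v_3\}$, $\{v_0,v_2,v_3\}$. Besides this, the only things to verify carefully are that the minimum branch-resolving set can indeed be taken inside $T_{v_p}$ (so that exactly one vertex is $R$-active and the two added vertices create precisely three active vertices) and that $L(G)\ge 1$; the rest is a direct appeal to Theorem~\ref{thm:unicdim}, Theorem~\ref{thm:basisforced}, and Lemma~\ref{lem:Sedlar1}\ref{lem:S1:branchgeodesic}.
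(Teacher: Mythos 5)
Your proof is correct and follows essentially the same route as the paper: both augment a minimum branch-resolving set (contained in the branch-active component) with a geodesic triple through the branch-active vertex, invoke Lemma~\ref{lem:Sedlar1}\ref{lem:S1:branchgeodesic} to get a metric basis of size $L(G)+2$, and use Theorem~\ref{thm:basisforced} to locate where a basis forced vertex could sit. The only differences are cosmetic refinements on your part --- you pin down $b(G)=1$ via Theorem~\ref{thm:unicdim}, pick a single geodesic triple avoiding the component $T_{v_j}$ (and verify the avoidance combinatorics for $g\geq 4$ explicitly, which the paper dismisses with ``it is easy to see''), whereas the paper varies the triple over multiple choices to exclude cycle vertices and appeals to Observation~\ref{obs:mdim} for the elements of the branch-resolving set.
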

\begin{proof}
Let $v_0$ be branch-active and let $S$ be a branch-resolving set of $G$ of cardinality $L(G)$. Due to Theorem~\ref{thm:basisforced}, the elements of $S$ are not basis forced vertices. Let $v_i$ and $v_j$ be vertices of the cycle $C$ such that they form a geodesic triple with $v_0$. The set $S \cup \{v_i,v_j\}$ is a metric basis of $G$ according to Lemma~\ref{lem:Sedlar1}\ref{lem:S1:branchgeodesic}. Since $g \geq 4$, we can choose the vertices $v_i$ and $v_j$ in multiple ways. Thus, it is easy to see that the vertices of the cycle are not basis forced vertices.
Due to Observation~\ref{obs:mdim}, the elements of $S$ are not basis forced vertices either. Therefore, the graph $G$ contains no basis forced vertices.
\end{proof}

Lemma~\ref{lem:nobfs} implies that if a unicyclic graph has basis forced vertices, then $\dim (G) \leq L(G) + 1$ or $b(G) = 0$. If $\dim (G) \leq L(G) + 1$, then due to Observation~\ref{obs:mdim}, we have $\dim (G) = L(G) + 1$ and the graph $G$ contains exactly one basis forced vertex. Thus, if $b(G) \geq 1$, the graph $G$ can contain at most one basis forced vertex (see Figure~\ref{fig:branching}). According to Theorem~\ref{thm:basisforced2}, a unicyclic graph can contain at most two basis forced vertices, and indeed when $b(G)=0$, the graph $G$ can contain either one or two basis forced vertices (see Figures~\ref{fig:vajaasiili} and~\ref{fig:siili}, respectively).

The following two lemmas show that we can divide unicyclic graphs with basis forced vertices into the three types represented by the three example graphs introduced in Figure~\ref{fig:threetypes}.

\begin{lem}\label{lem:twoforced}
Let $G$ be a unicyclic graph with $g \geq 4$. The graph $G$ contains two basis forced vertices if and only if $b(G) = 0$, $\dim (G) = 2$, and $G$ has a unique metric basis.
\end{lem}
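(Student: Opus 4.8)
The statement is an equivalence, and the reverse implication is essentially free, so the plan is to dispatch it quickly and concentrate on the forward direction.

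For the implication ($\Leftarrow$), suppose $b(G)=0$, $\dim(G)=2$, and $G$ has a unique metric basis $\{x,y\}$. Then this set is the only metric basis, so both $x$ and $y$ lie in every metric basis and are therefore basis forced; since $x\neq y$ (the basis has cardinality $2$), the graph has two basis forced vertices (and exactly two, by Theorem~\ref{thm:basisforced2}). This requires no real work.

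For ($\Rightarrow$), assume $G$ has two basis forced vertices. The first step is to deduce $b(G)=0$: as recorded in the discussion following Lemma~\ref{lem:nobfs} (a combination of Lemma~\ref{lem:nobfs} and Observation~\ref{obs:mdim}), a unicyclic graph with $b(G)\geq 1$ has at most one basis forced vertex, so having two forces $b(G)=0$. I then record the structural consequences of $b(G)=0$: no component $T_{v_i}$ contains a branching vertex, whence every cycle vertex carries at most one thread and every thread is a bare path; in particular $\ell(v)\leq 1$ for all $v$, so $L(G)=0$. By Theorem~\ref{thm:unicdim} this gives $\dim(G)\in\{2,3\}$, while Observation~\ref{obs:mdim} yields $\dim(G)\geq L(G)+2=2$.

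The crux is to exclude $\dim(G)=3$, and the idea is to show that a unicyclic $G$ with $b(G)=0$, $g\geq 4$, and $\dim(G)=3$ has \emph{no} basis forced vertex at all, contradicting the assumption. For each cycle vertex $v_i$, let $u_i$ be the endpoint of the unique thread at $v_i$, or $u_i=v_i$ if none exists. Whenever $v_a,v_b,v_c$ form a geodesic triple on $C$, the set $\{u_a,u_b,u_c\}$ is biactive and, since $L(G)=0$, branch-resolving, with its three active vertices $v_a,v_b,v_c$ forming a geodesic triple; hence it is a resolving set by Lemma~\ref{lem:Sedlar1}\ref{lem:S1:branchgeodesic}, and being of cardinality $3=\dim(G)$ it is a metric basis. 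Finally, for $g\geq 4$ and any prescribed index $i_0$ one can pick a geodesic triple avoiding $i_0$: rotating so that $i_0=0$, the indices $1,\lfloor g/2\rfloor,g-1$ split $C$ into three arcs each of length at most $g/2$. Since every vertex $w$ lies in a single component $T_{v_{i_0}}$, the metric basis built from a triple avoiding $i_0$ omits $w$, so $w$ is not forced. With $\dim(G)=3$ thereby excluded, we conclude $\dim(G)=2$; then every metric basis has two elements and contains both forced vertices $b_1,b_2$, so the unique metric basis is $\{b_1,b_2\}$, which gives uniqueness and finishes the argument.

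The main obstacle is precisely ruling out the $+1$ case $\dim(G)=3$ of Theorem~\ref{thm:unicdim}. The key observation making it tractable is that once $b(G)=0$ collapses all threads to simple paths and forces $L(G)=0$, the abundance of geodesic triples on a cycle of length at least $4$ produces, via Lemma~\ref{lem:Sedlar1}\ref{lem:S1:branchgeodesic}, enough distinct size-$3$ metric bases to avoid any prescribed vertex, so forced vertices simply cannot survive when $\dim(G)=3$.
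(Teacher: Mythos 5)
Your proof is correct and takes essentially the same approach as the paper: Observation~\ref{obs:mdim} together with Lemma~\ref{lem:nobfs} yields $b(G)=0$ and $L(G)=0$, and the case $\dim(G)=3$ is ruled out by producing size-$3$ metric bases from geodesic triples via Lemma~\ref{lem:Sedlar1}\ref{lem:S1:branchgeodesic}. The only difference is that you spell out the step the paper leaves as ``it is easy to find a metric basis'' avoiding a prescribed vertex, by exhibiting the explicit triple with indices $1$, $\lfloor g/2\rfloor$, $g-1$.
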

\begin{proof}
If the graph $G$ has a unique metric basis, then it contains exactly $\dim (G) = 2$ basis forced vertices.	

Assume that $G$ contains two basis forced vertices. According to Observation~\ref{obs:mdim}, we have $\dim (G) \geq L(G) + 2$. Now, we have $b(G) = 0$ due to Lemma~\ref{lem:nobfs}. Consequently, $L(G) = 0$. If $\dim (G) \geq 3$, then any set $S \subseteq V(G)$ such that $|S| = 3$ and the $S$-active vertices form a geodetic triple on the cycle is a metric basis of $G$ due to Lemma~\ref{lem:Sedlar1}\ref{lem:S1:branchgeodesic}. Thus, it is easy to find a metric basis of $G$ that does not contain at least one of the basis forced vertices, a contradiction. Therefore, we have $\dim (G) = 2$ and the only metric basis of $G$ consists of the two basis forced vertices.
\end{proof}

\begin{lem}\label{lem:oneforced}
Let $G$ be a unicyclic graph with $g \geq 4$. If $G$ contains exactly one basis forced vertex, then $b(G) \leq 1$.
\end{lem}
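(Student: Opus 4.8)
The plan is to prove the slightly stronger statement that $b(G) \geq 2$ forces $G$ to have \emph{no} basis forced vertices at all. Since Theorem~\ref{thm:basisforced2} together with the discussion following Lemma~\ref{lem:nobfs} already guarantees at most one basis forced vertex whenever $b(G) \geq 1$, this immediately yields $b(G) \leq 1$ whenever a basis forced vertex exists, and in particular when there is exactly one. So I would assume, for contradiction, that $b(G) \geq 2$ and that $v$ is a basis forced vertex of $G$.

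First I would pin down the metric dimension. By Observation~\ref{obs:mdim} we have $\dim(G) \geq L(G) + 1$, and since $b(G) \geq 2$ gives $\max\{2 - b(G), 0\} = 0$, Theorem~\ref{thm:unicdim} forces $\dim(G) \in \{L(G), L(G)+1\}$; together these give $\dim(G) = L(G) + 1$ (equivalently, one may invoke Lemma~\ref{lem:nobfs} to rule out $\dim(G) \geq L(G) + 2$). Next I fix a minimum branch-resolving set $R$ with $|R| = L(G)$. A short counting argument shows that such an $R$ consists only of thread vertices, namely one in each of the $\ell(w)-1$ required threads of each vertex $w$ of degree at least $3$, so $R$ contains no cycle vertex; moreover (as already noted in the proof of Theorem~\ref{thm:even}) its active vertices are exactly the branch-active ones, whence $a(R) = b(G) \geq 2$, and $v \notin R$ by Observation~\ref{obs:mdim}.

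The core of the argument mirrors Lemma~\ref{lem:nobfs}, but uses only one extra vertex instead of two: since we already have $b(G) \geq 2$ branch-active (hence $R$-active) cycle vertices, it suffices to activate one further cycle vertex so that three active vertices form a geodesic triple. Concretely, fix two branch-active cycle vertices $v_a, v_b$ and consider every cycle vertex $v_c$ for which $v_a, v_b, v_c$ form a geodesic triple on $C$. For any such $v_c$ the set $R \cup \{v_c\}$ is branch-resolving with three active vertices forming a geodesic triple, so it is a resolving set by Lemma~\ref{lem:Sedlar1}\ref{lem:S1:branchgeodesic}; since $v_c$ is a cycle vertex and $R$ contains none, $|R \cup \{v_c\}| = L(G) + 1 = \dim(G)$, so $R \cup \{v_c\}$ is in fact a metric basis. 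The geometric fact I would verify here is that the admissible positions of $v_c$, namely those making all three cyclic arcs have length $\leq g/2$, form an arc of $C$ containing at least two distinct vertices whenever $g \geq 4$; I would check the extreme cases $d(v_a,v_b) = 1$ and $d(v_a, v_b) = g/2$ explicitly (in the antipodal case both sides of the cycle become available, giving $g-2 \geq 2$ choices).

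Finally I would derive the contradiction by choosing $v_c$ so that the metric basis $R \cup \{v_c\}$ avoids $v$. By Theorem~\ref{thm:basisforced}, $v$ is either a cycle vertex $v_{i_0}$ with trivial component or a pendant hanging off such a $v_{i_0}$. In the pendant case $v$ is never a cycle vertex and $v \notin R$, so \emph{every} basis $R \cup \{v_c\}$ already avoids $v$; in the cycle-vertex case, since there are at least two admissible (and distinct) choices of $v_c$, at least one satisfies $v_c \neq v_{i_0}$, and the corresponding basis avoids $v$. Either way we produce a metric basis not containing $v$, contradicting that $v$ is basis forced, and we conclude $b(G) \leq 1$. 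The main obstacle is precisely the geometric counting step guaranteeing two admissible choices of $v_c$, so that the single forbidden position $v_{i_0}$ can be dodged; everything else is a direct application of the Sedlar--\v{S}krekovski machinery already imported in the excerpt.
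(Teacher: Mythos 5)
Your overall mechanism is the paper's: under $b(G)\geq 2$, complete the two branch-active cycle vertices to a geodesic triple with a third cycle vertex and invoke Lemma~\ref{lem:Sedlar1}\ref{lem:S1:branchgeodesic} to produce a metric basis avoiding the forced vertex $v$. But the step you yourself flag as the main obstacle fails as stated: the claim that the admissible completions $v_c$ (those making all three arcs of length at most $g/2$) number at least two for every $g\geq 4$ is false when $g$ is odd. Writing $a=d(v_a,v_b)$, for odd $g$ no candidate on the short arc works (the opposite arc has length $g-a\geq (g+1)/2 > g/2$), and on the long arc there are exactly $a$ admissible positions; so with adjacent branch-active vertices ($a=1$, e.g. $g=5$, $v_a=v_0$, $v_b=v_1$) the unique choice is $v_3$. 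In your cycle-vertex case you then cannot in general dodge $v_{i_0}$, and the proof as written breaks for odd girth. (Your even-girth counts, $a+1$ choices when $a<g/2$ and $g-2$ in the antipodal case, are correct.)

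The gap is easily repaired, and the repair is exactly what the paper does: since you are assuming a basis forced vertex exists, Theorem~\ref{thm:even} already gives that $g$ is even, after which your counting goes through; you simply never invoked it. With that one citation added, your argument is sound, though heavier than the paper's. The paper does not pin down $\dim(G)=L(G)+1$, nor does it analyze the structure of minimum branch-resolving sets: it takes an arbitrary metric basis $S$, observes via Theorem~\ref{thm:basisforced} and Lemma~\ref{lem:Sedlar1}\ref{lem:S1:resolvingbranch} that $S\setminus\{v\}$ is still branch-resolving, and then shows that $S[v \leftarrow v_k]$ is a metric basis for a suitable $v_k\neq v,v_i,v_j$. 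Your extra structure ($R$ consisting only of thread vertices, $a(R)=b(G)$, the dimension computation) is all correct but unnecessary once you swap within a given basis instead of building one from scratch.
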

\begin{proof}
Suppose to the contrary that $b(G) \geq 2$. Let $v_i$ and $v_j$ be branch-active. Let $S$ be a metric basis of $G$ and let $v$ be a basis forced vertex of $G$. (Notice that due to Theorem~\ref{thm:basisforced}, we have $v \neq v_i,v_j$.)

Since $g$ is even due to Theorem \ref{thm:even}, there exists a vertex $v_k$ such that $v_k \neq v,v_i,v_j$ and $v_k$, $v_i$ and $v_j$ form a geodesic triple on the cycle of $G$. Due to Theorem \ref{thm:basisforced} and Lemma~\ref{lem:Sedlar1}\ref{lem:S1:resolvingbranch}, the set $S \setminus \{v\}$ is a branch-resolving set of $G$. Thus, the set $S[v \leftarrow v_k]$ is a metric basis of $G$ according to Lemma~\ref{lem:Sedlar1}\ref{lem:S1:branchgeodesic}, a contradiction.
\end{proof}

Thus, there are three types of unicyclic graphs that have basis forced vertices:
\begin{itemize}
\item We have $b(G)=0$, the graph $G$ contains two basis forced vertices and has a unique metric basis (for example, the graph in Figure~\ref{fig:siili}).

\item We have $b(G)=0$ and the graph $G$ contains exactly one basis forced vertex (for example, the graph in Figure~\ref{fig:vajaasiili}).

\item We have $b(G)=1$ and the graph contains exactly one basis forced vertex (for example, the graph in Figure~\ref{fig:branching}).
\end{itemize}

In Section \ref{sec:cycleorpendant}, we investigate whether the basis forced vertices are on the cycle or as pendants (these are the only two possibilities according to Theorem~\ref{thm:basisforced}). The remainder of this section is devoted to finding more general properties of unicyclic graphs with basis forced vertices.

\begin{thm}\label{thm:bfmdim}
Let $G$ be a unicyclic graph with $g\geq 4$ and at least one basis forced vertex. Then $\dim (G) = L(G) + \max \{2-b(G),0\}$.
\end{thm}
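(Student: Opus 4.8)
The plan is to lean on Theorem~\ref{thm:unicdim}, which already pins $\dim(G)$ down to one of the two consecutive values $L(G)+\max\{2-b(G),0\}$ and $L(G)+\max\{2-b(G),0\}+1$, and to show that the presence of a basis forced vertex always forces the \emph{smaller} of the two. The reduction that makes this tractable is the trichotomy established just before the theorem: a unicyclic graph with a basis forced vertex satisfies $b(G)\in\{0,1\}$ (combine the discussion after Lemma~\ref{lem:nobfs} with Lemmas~\ref{lem:twoforced} and~\ref{lem:oneforced}). Hence I would split the argument into the two cases $b(G)=1$ and $b(G)=0$.

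For $b(G)=1$ there is essentially nothing new to prove. Here $\max\{2-b(G),0\}=1$, so the goal is $\dim(G)=L(G)+1$. Since $G$ has a basis forced vertex and $b(G)\neq 0$, Lemma~\ref{lem:nobfs} excludes $\dim(G)\geq L(G)+2$, giving $\dim(G)\leq L(G)+1$; Observation~\ref{obs:mdim} supplies the matching lower bound $\dim(G)\geq L(G)+1$. Combining these yields the claim directly.

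The real content is the case $b(G)=0$, where no cycle vertex carries more than one thread, so $L(G)=0$ and $\max\{2-b(G),0\}=2$; the goal becomes $\dim(G)=2$. By Theorem~\ref{thm:unicdim} we have $\dim(G)\in\{2,3\}$, so it suffices to exclude $\dim(G)=3$. Arguing by contradiction, suppose $\dim(G)=3$. Since $L(G)=0$, the branch-resolving condition is vacuously satisfied, so \emph{any} set $S$ of three cycle vertices whose (necessarily three) active vertices form a geodesic triple is biactive and branch-resolving, and hence resolving by Lemma~\ref{lem:Sedlar1}\ref{lem:S1:branchgeodesic}; as $|S|=3=\dim(G)$, such an $S$ is a metric basis. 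I would then invoke Theorem~\ref{thm:basisforced}: a basis forced vertex is either a cycle vertex $v_i$ with $V(T_{v_i})=\{v_i\}$ or a pendant attached to some $v_i$. In both situations it is enough to produce a geodesic triple of cycle vertices avoiding $v_i$, since the resulting metric basis then contains no basis forced vertex, contradicting the hypothesis. Therefore $\dim(G)=2=L(G)+\max\{2-b(G),0\}$.

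The main obstacle is the combinatorial fact underpinning this last case: for girth $g\geq 4$ (even, by Theorem~\ref{thm:even}) there is always a geodesic triple of cycle vertices avoiding any prescribed cycle vertex. I would verify this by recalling that three cycle vertices form a geodesic triple exactly when the three arcs between consecutive chosen vertices all have length at most $g/2$, and then exhibiting such a triple that skips a given vertex — for instance $\{v_1,v_2,v_3\}$ in the tight case $g=4$, with arc lengths $1,1,2$, and progressively more room as $g$ grows. Everything else is a matter of assembling the already-proven lemmas, so this elementary observation about the cycle is the only place where genuine care is needed.
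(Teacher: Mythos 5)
Your proposal is correct and follows essentially the same route as the paper: the dichotomy of Theorem~\ref{thm:unicdim}, the reduction to $b(G)\in\{0,1\}$ via Lemmas~\ref{lem:twoforced} and~\ref{lem:oneforced}, Lemma~\ref{lem:nobfs} for $b(G)=1$, and the geodesic-triple argument via Lemma~\ref{lem:Sedlar1}\ref{lem:S1:branchgeodesic} for $b(G)=0$. Your only addition is to make explicit the fact the paper leaves as ``the triple can be chosen in multiple ways,'' namely that for even $g\geq 4$ one can always pick a geodesic triple (all three arcs of length at most $g/2$) avoiding a prescribed cycle vertex, which is a correct and welcome elaboration.
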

\begin{proof}
Recall that according to Theorem \ref{thm:unicdim} we have either $\dim (G) = L(G) + \max \{2-b(G),0\}$ or $\dim (G) = L(G) + \max \{2-b(G),0\} + 1$.

Suppose to the contrary that $\dim (G) = L(G) + \max \{2-b(G),0\} + 1$. The graph $G$ has at most two basis forced vertices according to Theorem~\ref{thm:basisforced2}. Now either $b(G) = 0$ or $b(G) = 1$ due to Lemmas~\ref{lem:twoforced} and~\ref{lem:oneforced}.

Suppose that $b(G) = 0$. Then $L(G) = 0$ and $\dim (G) = 3$. Now, according to Lemma~\ref{lem:Sedlar1}\ref{lem:S1:branchgeodesic}, any set $S \subseteq V(G)$ such that $|S|=3$ and the $S$-active vertices form a geodesic triple is a metric basis of $G$. However, we can choose the elements of the geodesic triple in multiple ways, and thus the graph $G$ cannot have any basis forced vertices.

Suppose then that $b(G) = 1$. Now $\dim (G) = L(G) + 2$, and the graph $G$ does not have any basis forced vertices according to Lemma \ref{lem:nobfs}, a contradiction.
\end{proof}

Let us consider the metric dimensions of different types of unicyclic graphs that contain basis forced vertices. For this purpose, let $G$ again be a unicyclic graph containing basis forced vertices.
\begin{itemize}
	\item If $b(G)=0$, then $L(G)=0$ and $\dim (G) = 2$ due to Theorem~\ref{thm:bfmdim}. On the first hand, if the graph $G$ contains two basis forced vertices, then $G$ has a unique metric basis as we saw before; for an example, see Figure~\ref{fig:siili}. On the other hand, if the graph $G$ contains only one basis forced vertex, then the metric bases of $G$ consist of the basis forced vertex and one other vertex. Indeed, in our example graph in Figure~\ref{fig:vajaasiili}, the metric bases of $G$ consist of one gray vertex and the basis forced vertex that is illustrated in black.
	
	\item If $b(G)=1$, then $\dim(G)= L(G) + 1$ according to Theorem~\ref{thm:bfmdim}. Furthermore, the graph $G$ contains only one basis forced vertex, and all elements of a metric basis that contribute towards $L(G)$ are in one component $T_{v_i}$ since $b(G)=1$. Thus, the metric bases of $G$ consist of the basis forced vertex and the vertices of the component $T_{v_i}$ (see Figure~\ref{fig:branching}).
\end{itemize}

Recall that according to Lemma~\ref{lem:Sedlar1}\ref{lem:S1:resolvingbranch} we have $a(S) \geq 2$ for any metric basis $S$ of a unicyclic graph. Due to the discussion above, we have the following corollary.

\begin{cor}\label{cor:active2}
Let $G$ be a unicyclic graph with at least one basis forced vertex. If $S$ is a metric basis of $G$, then $a(S) = 2$.
\end{cor}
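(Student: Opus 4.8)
The plan is to combine the lower bound on the number of active vertices that every metric basis automatically satisfies with a matching upper bound extracted from the structural description developed above. By Lemma~\ref{lem:Sedlar1}\ref{lem:S1:resolvingbranch}, every metric basis $S$ of $G$ is biactive, so $a(S) \geq 2$ holds without any further work; the whole content of the corollary is therefore the reverse inequality $a(S) \leq 2$. Since $G$ has a basis forced vertex, Lemmas~\ref{lem:twoforced} and~\ref{lem:oneforced} (together with Theorem~\ref{thm:basisforced2}) restrict us to $b(G) \in \{0,1\}$, and Theorem~\ref{thm:bfmdim} pins down $\dim(G)$ in each case; these two facts are what I would feed into the counting argument.

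First I would dispose of the case $b(G) = 0$. Here Theorem~\ref{thm:bfmdim} gives $L(G) = 0$ and $\dim(G) = 2$, so any metric basis $S$ has exactly two elements. Because the components $T_{v_i}$ of $G - E(C)$ are pairwise disjoint, each vertex of $S$ lies in exactly one such component, whence the number of $S$-active cycle vertices is at most $|S| = 2$. Together with $a(S) \geq 2$ this already yields $a(S) = 2$.

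The case $b(G) = 1$ is where the real work lies. Now Theorem~\ref{thm:bfmdim} gives $\dim(G) = L(G) + 1$, so $|S| = L(G) + 1$. Let $v_0$ be the unique branch-active cycle vertex. The key observation I would establish is that every branching vertex of $G$ --- and hence every vertex contributing to the sum $L(G)$ --- lies in $T_{v_0}$: a vertex in a different component $T_{v_i}$ that contributed to $L(G)$ would force $v_i$ to be branch-active, contradicting $b(G) = 1$. Since $S$ is branch-resolving, for each branching vertex $v$ it meets at least $\ell(v) - 1$ of the threads at $v$, and as these threads are pairwise disjoint and all contained in $T_{v_0}$, this accounts for at least $L(G)$ distinct elements of $S$ lying inside $T_{v_0}$. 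Consequently at most $|S| - L(G) = 1$ element of $S$ lies outside $T_{v_0}$, so besides $v_0$ at most one further cycle vertex can be $S$-active; this gives $a(S) \leq 2$, and with the lower bound, $a(S) = 2$.

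I expect the main obstacle to be the bookkeeping in the $b(G)=1$ case: namely making precise that all of the $L(G)$ thread-elements forced by the branch-resolving property genuinely sit in the single component $T_{v_0}$ and are distinct, so that only a single spare element of $S$ remains to activate a second component (one should also note that a branching vertex in $T_{v_0}$ forces $L(G) \geq 1$, guaranteeing that $v_0$ itself is $S$-active). Everything else is an immediate consequence of Theorem~\ref{thm:bfmdim} and the biactivity guaranteed by Lemma~\ref{lem:Sedlar1}\ref{lem:S1:resolvingbranch}. Alternatively, one can read the conclusion directly off the explicit description of the metric bases given in the discussion preceding the corollary, where a metric basis is exhibited as the basis forced vertex together with the vertices of one component $T_{v_i}$, which manifestly activates exactly two cycle vertices.
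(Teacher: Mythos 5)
Your proposal is correct and follows essentially the same route as the paper: the paper also combines the lower bound $a(S)\geq 2$ from Lemma~\ref{lem:Sedlar1}\ref{lem:S1:resolvingbranch} with the case analysis $b(G)\in\{0,1\}$ and the dimension values from Theorem~\ref{thm:bfmdim}, noting that when $b(G)=0$ a basis has only two elements and when $b(G)=1$ the $L(G)$ branch-resolving elements all sit in the single branch-active component, leaving one spare element. Your write-up merely makes explicit the disjoint-thread counting and the observation $L(G)\geq 1$, which the paper leaves implicit in the discussion preceding the corollary.
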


According to Corollary~\ref{cor:active2}, if a unicyclic graph $G$ has basis forced vertices, then there are only two $S$-active vertices on the cycle for any metric basis $S$. Consequently, if a unicyclic graph $G$ contains basis forced vertices, then a canonical labelling of $C$ is always such that the vertices $v_0$ and $v_k$ are the only $S$-active vertices for any metric basis $S$. Moreover, we can choose in which component ($T_{v_0}$ or $T_{v_k}$) the basis forced vertex we are considering is located. In the majority of this paper, the component $T_{v_k}$ contains a basis forced vertex and $v_0$ is branch-active if $b(G) \neq 0$. The following lemma describes how the two $S$-active vertices are located with respect to one another, when $S$ is a metric basis of a unicyclic graph that contains basis forced vertices.

\begin{lem}\label{lem:klimits}
Let $G$ be a unicyclic graph, and let $S$ be a metric basis of $G$. If $G$ contains a basis forced vertex in $T_{v_k}$, then $2 \leq k < g/2$.
\end{lem}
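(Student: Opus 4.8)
The plan is to pin down $k$ from both sides using the structural tools already available. First I would record the ambient facts: since $G$ has a basis forced vertex, Theorem~\ref{thm:even} gives that $g$ is even, and Corollary~\ref{cor:active2} gives $a(S)=2$. Hence the cycle carries exactly two $S$-active vertices, which under the canonical labelling are $v_0$ and $v_k$, and $k$ equals the cycle distance between them; in particular $1\le k\le g/2$. It then remains to exclude $k=g/2$ and $k=1$.

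For the upper bound the argument is immediate: if $k=g/2$, then the triple ($a(S)=2$, $g$ even, $k=g/2$) is exactly configuration $\A$, so by Theorem~\ref{thm:abc} the set $S$ would fail to be resolving, contradicting that $S$ is a metric basis. Thus $k<g/2$.

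The bulk of the work is the lower bound $k\ge2$, which I would prove by contradiction assuming $k=1$. Here the forced vertex $v$ lies in $T_{v_1}$, so by Theorem~\ref{thm:basisforced} the component $T_{v_1}$ is trivial (either $\{v_1\}$ or a single pendant), and in particular $v_1$ is not branch-active. The key observation is what configuration $\B$ forbids for the resolving set $S$ when $k=1$: its index set contains $0$ together with the block $[2,g/2-1]\cup[g/2+2,g-1]$. Avoiding $\B$ therefore forces (i) $v_0$ to carry no $S$-free thread, and (ii) every $v_i$ with $i\in[2,g/2-1]\cup[g/2+2,g-1]$ to carry no thread at all (such a thread would be $S$-free, as only $v_0,v_1$ are active). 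With this in hand I would relocate the far active vertex by setting $S'=(S\setminus V(T_{v_1}))\cup\{u_{g/2+1}\}$, where $u_{g/2+1}$ is the endpoint of the at most one thread hanging from $v_{g/2+1}$ (or $v_{g/2+1}$ itself); since $v_{g/2+1}$ is not branch-active, $u_{g/2+1}$ is well defined, and dropping $V(T_{v_1})$ keeps $v_0$ active and preserves the branch-resolving property. The set $S'$ is biactive with active vertices $v_0$ and $v_{g/2+1}$, so $k'=g/2-1$, and I would check it avoids all three configurations: $\A$ fails since $k'\ne g/2$; the $\B$-indices collapse to $\{0,g/2-1\}$, i.e.\ to the two active vertices, whose threads are $S'$-hit; and the $\C$-window $[0,g/2-1]$ meets, among cycle vertices, only $v_0$ (no free thread, by (i)) and $v_{g/2+1}$ (hit by $u_{g/2+1}$), every other vertex of that arc being thread-free by (ii). Theorem~\ref{thm:abc} then makes $S'$ resolving, and since $|S'|\le\dim(G)$ it is a metric basis avoiding $v$, contradicting that $v$ is basis forced. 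Hence $k\ge2$.

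I expect the configuration bookkeeping in the lower bound to be the main obstacle. The delicate point is that the $\C$-window for $S'$ has width $g/2-k'=1$, so any free thread sitting on the short arc from $v_0$ to $v_{g/2+1}$ would reinstate $\C$; this is exactly why one must first extract from $\B$ (via its $\{0\}$-term and its middle block) that $v_0$ carries no free thread and that the interior arc vertices carry no thread. A secondary point to handle carefully is the well-definedness of $u_{g/2+1}$ and the preservation of the branch-resolving property; both rely on $v_{g/2+1}$ not being branch-active, which holds because $a(S)=2$ forces every branch-active cycle vertex to be $S$-active and hence to equal $v_0$ or $v_1$.
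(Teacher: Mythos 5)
Your proposal is correct and follows essentially the same route as the paper's proof: the upper bound via configuration $\A$, and for $k=1$ the same exchange argument replacing the forced vertex by $v_{g/2+1}$ (or the end-vertex of its thread), relabelling so that $k'=g/2-1$, and verifying that configurations $\A$, $\B$, $\C$ are all avoided, contradicting that $v$ is basis forced. Your extra bookkeeping (deriving the thread restrictions from $\B$, well-definedness of $u_{g/2+1}$, preservation of the branch-resolving property) only makes explicit what the paper leaves implicit.
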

\begin{proof}
Since $G$ contains at least one basis forced vertex, $a(S)=2$ due to Corollary~\ref{cor:active2}. Thus, we have $0 < k \leq g/2$ (due to the definition of canonical labelling). If $k = g/2$, then the graph $G$ contains configuration $\A$ with respect to $S$, and the set $S$ is not a resolving set of $G$ according to Theorem \ref{thm:abc}. Thus, $k < g/2$.

Suppose that $k=1$. According to Lemma~\ref{lem:twoforced} and Lemma~\ref{lem:oneforced}, we have $b(G) \leq 1$. If $C$ contains a branch-active vertex, then that vertex is $v_0$ due to Theorem~\ref{thm:basisforced}. Thus, for any $v_i$, $i \neq 0$, either $\deg (v_i) = 2$ or there is exactly one thread attached to $v_i$.

Since $S$ is a metric basis of $G$, then $G$ does not contain configuration $\B$ with respect to $S$ due to Theorem \ref{thm:abc}. Thus, there are no $S$-free threads at vertices $v_i$ where $i \in [0,g/2-1] \cup [g/2+2,g-1]$. Consequently, we have $\deg(v_i)=2$ for all $i \in [2,g/2-1] \cup [g/2+2,g-1]$. Let $u$ be $v_{g/2+1}$ or the end-vertex of the thread attached to $v_{g/2+1}$ if such a thread exists.
We claim that now the set $R=S[v \leftarrow u]$ is a metric basis of $G$. Let us relabel $C$ so that $u_0=v_0$ and $u_{g/2-1}=v_{g/2+1}$. The labelling $u_i$ is canonical with respect to $R$ with $k=g/2-1$. The graph $G$ does not contain configuration $\A$ with respect to the set $R$. There are no $R$-free threads at vertices $u_i$ where $i \in [0,g/2-1]$. Thus, the graph $G$ does not contain configuration $\B$ or $\C$ with respect to the set $R$. Thus, the set $R$ is a metric basis of $G$ according to Theorem \ref{thm:abc}, a contradiction.
\end{proof}

According to Lemmas~\ref{lem:twoforced} and~\ref{lem:oneforced}, we have $b(G) \leq 1$ for every unicyclic graph $G$ that contains basis forced vertices. Thus, there is at most one branch-active vertex on the cycle $C$. This branch-active vertex is always $S$-active for any metric basis $S$, since the set $S$ is branch-resolving due to Lemma~\ref{lem:Sedlar1}\ref{lem:S1:resolvingbranch}. Therefore, every vertex on the cycle that is not branch-active is either of degree~2 or has exactly one thread attached to it. The following two lemmas consider how long the $S$-free threads can be, and without which $S$-free threads the graph $G$ cannot have basis forced vertices.

\begin{lem}\label{lem:bfproperties}
Let $G$ be a unicyclic graph with at least one basis forced vertex. Let $S$ be a metric basis of $G$, and let $i \in [1,k-1]$ where $k$ is the index of the canonical labelling. The following properties hold.
\begin{itemize}
\item[\emph{(i)}] Either $\deg (v_i) = 2$ or there is exactly one thread at $v_i$.
\item[\emph{(ii)}] A thread at $v_i$ is of length at most $g/2-k-1$.
\item[\emph{(iii)}] There exists a thread of length $g/2-k-1$ at some $v_i$ or $k=g/2-2$ and there is no basis forced vertex on the cycle.
\item[\emph{(iv)}] For each $j \in [k+1,g/2-1] \cup [g/2+k+1,g-1]$, we have $\deg(v_j) = 2$.
\end{itemize}
\end{lem}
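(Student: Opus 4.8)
The plan is to establish (i), (ii) and (iv) directly from the three forbidden configurations of Theorem~\ref{thm:abc} together with facts already in hand, and to reserve the real work for (iii). Throughout I use that, by Corollary~\ref{cor:active2}, $a(S)=2$, so in the canonical labelling the only $S$-active vertices of $C$ are $v_0$ and $v_k$, and that $g$ is even by Theorem~\ref{thm:even}. For (i), a vertex $v_i$ with $i\in[1,k-1]$ is not $S$-active; if it had $\ell(v_i)\ge 2$ it would have degree at least $4$, hence be a branching and therefore branch-active vertex, forcing it to be $S$-active since $S$ is branch-resolving (Lemma~\ref{lem:Sedlar1}\ref{lem:S1:resolvingbranch}) --- a contradiction. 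So $\deg(v_i)=2$ or exactly one thread is attached.

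For (ii), a thread at such a $v_i$ is $S$-free and $i\in[0,k]$; were its length at least $g/2-k$, the graph would contain configuration $\C$ with respect to $S$, contradicting that $S$ is a resolving set (Theorem~\ref{thm:abc}), so the length is at most $g/2-k-1$. For (iv), every $v_j$ with $j\in[k+1,g/2-1]\cup[g/2+k+1,g-1]$ is again non-$S$-active, so any thread on it is $S$-free; since the index $j$ lies in the index set of configuration $\B$, a thread would produce $\B$, again contradicting Theorem~\ref{thm:abc}. Hence $\deg(v_j)=2$.

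The core is (iii). I would arrange the labelling so that the basis forced vertex $b$ lies in $T_{v_k}$; by Theorem~\ref{thm:basisforced} either $b=v_k$ with $T_{v_k}=\{v_k\}$, or $b$ is a pendant with $T_{v_k}=\{v_k,b\}$, and by Lemma~\ref{lem:klimits} we have $2\le k<g/2$. Assuming no thread of length exactly $g/2-k-1$ occurs on any $v_i$, $i\in[1,k-1]$, the idea is to \emph{shift the active component off $T_{v_k}$} to produce a metric basis omitting $b$, contradicting that $b$ is basis forced. When $k\le g/2-2$, property (iv) gives $\deg(v_{k+1})=2$, so I set $R=S[b\leftarrow v_{k+1}]$, which is biactive and branch-resolving with canonical index $k+1$. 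I would then check $R$ against all three configurations: $\A$ fails since $k+1\le g/2-1$; $\B$ fails by (iv), since the relevant indices carry no $S$-free (hence no $R$-free) threads and $v_0$ carries none because $S$ avoids $\B$; and for $\C$ the only candidate $R$-free threads of length $\ge g/2-k-1$ inside $[0,k+1]$ are excluded by (ii) and the assumption, except for the pendant at $v_k$, which has length $1$ and triggers $\C$ precisely when $g/2-k-1\le 1$, i.e.\ $k=g/2-2$ with $b$ that pendant. Thus either $R$ is a metric basis omitting $b$ (impossible), or $k=g/2-2$ and $b$ is a pendant; in the latter case the same unobstructed shift applied to a hypothetical on-cycle basis forced vertex shows there is none on the cycle, giving exactly the second alternative of (iii).

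It remains to dispose of the degenerate case $k=g/2-1$, where the bound in (ii) reads $0$ and the shift target $v_{k+1}=v_{g/2}$ is blocked by configuration $\A$. Here I would show the case cannot occur when $G$ has a basis forced vertex: exploiting that $\C$ constrains only the short-arc indices $[0,k]$, I would build a metric basis whose two active components are $T_{v_0}$ and a component on the opposite (long) arc, chosen so that after canonical relabelling $T_{v_k}$ falls at index $g-2$, which lies outside the index sets of $\A$, $\B$ and $\C$; this set omits $b$, a contradiction. I expect the genuine difficulty to be exactly here: when $k=g/2-1$ the far-arc vertices $v_{g/2},\dots,v_{g-1}$ are \emph{not} constrained by (iv), so their possible threads must be absorbed into the new active component --- for instance by activating the outermost far-arc thread, or $v_{g/2+1}$ if there is none --- to prevent configuration $\C$ from reappearing, and verifying that such a choice always succeeds is the delicate step.
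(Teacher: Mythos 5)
Parts (i), (ii), and (iv) of your proposal coincide with the paper's proof: (i) via branch-activity forcing $S$-activity together with $a(S)=2$, (ii) via configuration $\C$, (iv) via configuration $\B$. Your treatment of (iii) in the range $2\le k\le g/2-2$ is also the paper's argument: the swap $R=S[v\leftarrow v_{k+1}]$, ruling out $\A$ and $\B$ for $R$, concluding that $\C$ must hold for $R$, and then locating the responsible thread either at some $v_i$ with $i\in[1,k-1]$ (first alternative) or at $v_k$, where Theorem~\ref{thm:basisforced} caps its length at $1$ and forces $k=g/2-2$ with the forced vertex a pendant (second alternative). Up to that point the two proofs are essentially identical.

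The genuine gap is your final paragraph on $k=g/2-1$. Your claim that this case ``cannot occur when $G$ has a basis forced vertex'' is false: the graph of Figure~\ref{fig:siili} has $g=6$, its unique metric basis consists of two pendants attached to cycle vertices at distance $2$, so $k=2=g/2-1$, and it has two basis forced vertices (the same holds for Figure~\ref{fig:vajaasiili}; the paper explicitly notes that both graphs fall under case~(ii) of Lemma~\ref{lem:nobftocycle}, which is precisely the case $k=g/2-1$, $b(G)=0$). Your proposed construction --- shifting the active component to the long arc so that the old $T_{v_k}$ lands at index $g-2$ --- is exactly what fails in these graphs: the far-arc vertices $v_{g/2},\dots,v_{g-1}$ carry threads that are not constrained by (iv), and every attempted replacement re-creates configuration $\B$ or $\C$ (in Figure~\ref{fig:siili}, for instance, swapping in the pendant at the antipodal vertex leaves an $S$-free pendant one step away on the cycle, triggering $\C$). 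That this obstruction is unavoidable is precisely why those pendants are basis forced, so the step you flagged as ``delicate'' cannot be completed. The paper instead disposes of $k=g/2-1$ in one line: there the required thread length is $g/2-k-1=0$, so the first alternative of (iii) holds trivially, and no elimination of the case is needed. Replacing your last paragraph by this observation repairs the proof; as written, the proposal proves a false strengthening in that case.
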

\begin{proof}
Let $v$ be a basis forced vertex of $G$. Let $C$ be canonically labelled so that $T_{v_k}$ contains $v$. Due to Theorem \ref{thm:even} and Lemma \ref{lem:klimits}, $g$ is even and $2 \leq k < g/2$.

(i) Otherwise $v_i$ is branch-active. Due to Corollary \ref{cor:active2}, we have $a(S)=2$, and the claim follows since a branch-active vertex must be $S$-active if $S$ is a metric basis.

(ii) According to Theorem \ref{thm:abc} the graph $G$ does not contain configuration $\C$ with respect to the set $S$, and the claim follows.

(iii) If $k=g/2-1$, then $g/2-k-1 = 0$, and the claim is trivial.

Suppose then that $2\leq k \leq g/2-2$. Since $v$ is a basis forced vertex, the set $R=S[v \leftarrow v_{k+1}]$ is not a metric basis of $G$. According to Theorem \ref{thm:abc}, the graph $G$ contains configuration $\A$, $\B$ or~$\C$ with respect to the set $R$. However, $G$ does not contain configuration
\begin{itemize}
\item[$\A$:] Since $k \leq g/2-2$, we have $k+1 \leq g/2-1$, and the graph $G$ does not contain configuration $\A$ with respect to the set $R$.

\item[$\B$:] The graph $G$ does not contain configuration $\B$ with respect to the set $S$. Thus, there does not exist a thread at any $v_i$ where $i \in [k+1,g/2-1] \cup [g/2+k+2,g-1]$. Consequently, the graph $G$ does not contain configuration $\B$ with respect to the set $R$.
\end{itemize}
Thus, $G$ contains configuration $\C$ with respect to the set $R$. In other words, there exists a thread of length at least $g/2-(k+1)$ at some $v_i$ where $i\in [1,k]$. If such a thread does not exist at any $v_i$ where $i \in [1,k-1]$, then there is a thread of length $g/2-(k+1)$ at $v_k$. Since $T_{v_k}$ contains the basis forced vertex $v$, we have $g/2-(k+1) \leq 1$ due to Theorem \ref{thm:basisforced}. Consequently, we have $k = g/2-2$ and $v$ is a pendant attached to $v_k$. 

(iv) Suppose to the contrary that $v_j \in C$ is a vertex such that $\deg(v_j) > 2$ and $j \in [k+1,g/2-1] \cup [g/2+k+1,g-1]$. By the previous observation (above Lemma~\ref{lem:bfproperties}), this implies that there is exactly one thread attached to $v_i$. Hence, the graph $G$ contains configuration~$\B$ with respect to $S$ and a contradiction follows with the fact that $S$ is a metric basis of $G$.
\end{proof}

In the following lemma, we introduce a new parameter that is very useful in characterising unicyclic graphs with basis forced vertices. Note that the vertex $v_{g/2 + j}$ is the vertex antipodal to $v_j$ on the cycle $C$.

\begin{lem}\label{lem:mthread}
Let $G$ be a unicyclic graph with at least one basis forced vertex $v$. Let $S$ be a metric basis of $G$ and let $C$ be (canonically) labelled so that $T_{v_k}$ contains the basis forced vertex $v$. Let $m = \min \{ j \geq 1 \ | \ \deg(v_j) \geq 3 \text{ or } \deg(v_{g/2+j}) \geq 3 \}$. Then, it follows $m < k$ and there exists a thread of length at least $m$ at some $v_i$ where $i \in [g/2+m+1,g/2+k]$.
\end{lem}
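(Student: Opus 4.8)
The plan is to run the engine of Theorem~\ref{thm:abc} on a relocated copy of $S$, in the spirit of Lemmas~\ref{lem:klimits} and~\ref{lem:bfproperties}: I relocate the basis forced vertex $v$ to a far-side cycle vertex $v_{g/2+p}$, relabel canonically, and read off which of $\A,\B,\C$ can survive. Before starting I would record three facts. First, $m$ is well defined with $1\le m\le g/2$: since $G$ is not a pure cycle it has a cycle vertex of degree $\ge 3$, and every such vertex is $v_j$ or $v_{g/2+j}$ for some $j\in[1,g/2]$. Second, by the minimality in the definition of $m$, all of $v_1,\dots,v_{m-1}$ and $v_{g/2+1},\dots,v_{g/2+m-1}$ have degree $2$, and by Lemma~\ref{lem:bfproperties}(iv) so do $v_{g/2+k+1},\dots,v_{g-1}$. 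Third, $G$ has no $S$-free thread at $v_0$, because $0$ lies in the index set of configuration~$\B$ for $S$ and $S$ is resolving. I also record (from Corollary~\ref{cor:active2} and the dimension discussion) that $v_0,v_k$ are the only $S$-active vertices and $S\cap V(T_{v_k})=\{v\}$, so $v_k$ carries no branching vertex.

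Next I set up the generic move. For $p\in\{1,\dots,k\}$ let $R=S[v\leftarrow u]$, where $u$ is a vertex on the (unique, if any) thread at $v_{g/2+p}$, and $u=v_{g/2+p}$ otherwise. Removing $v$ from the branching-free component $T_{v_k}$ and adding $u$ keeps $R$ biactive and branch-resolving, with $R$-active cycle vertices exactly $v_0$ and $v_{g/2+p}$. Relabelling canonically about $v_0$ in the reverse orientation gives new index $k'=g/2-p$, so $\A$ (which needs $k'=g/2$) is excluded. The one genuinely technical step is to transport the index ranges of $\B$ and $\C$ back to the original labelling: configuration~$\B$ then sees $R$-free threads only at $v_0$, at $v_1,\dots,v_{p-1}$ and at $v_{g/2+1},\dots,v_{g/2+p}$, while configuration~$\C$ sees $R$-free threads of length $\ge p$ only at $v_0$ and at $v_{g/2+p},\dots,v_{g-1}$.

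To get $m<k$, I argue by contradiction: if $m\ge k$, take $p=k$. Then $v_1,\dots,v_{k-1}$ and $v_{g/2+1},\dots,v_{g/2+k-1}$ have degree $2$ (fact two), $v_{g/2+k}$ carries the chosen $u$ so its thread is not $R$-free, $v_{g/2+k+1},\dots,v_{g-1}$ have degree $2$ (fact two), and $v_0$ has no $R$-free thread (fact three). Hence none of $\A,\B,\C$ occurs, so Theorem~\ref{thm:abc} makes $R$ a metric basis avoiding $v$ — contradicting that $v$ is basis forced. Therefore $m<k$.

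For the thread I take $p=m$, which is now admissible. Facts two and three again annihilate all of configuration~$\B$ and the $v_0$-part of $\C$, the vertex $u$ annihilates the $v_{g/2+m}$-part of $\C$, and Lemma~\ref{lem:bfproperties}(iv) removes $v_{g/2+k+1},\dots,v_{g-1}$. Since $v$ is basis forced, $R$ is not a metric basis, so by Theorem~\ref{thm:abc} some configuration must occur; the only one left is $\C$, which forces an $S$-free thread of length $\ge m$ at some $v_i$ with $i\in[g/2+m+1,g/2+k]$ — exactly the assertion. I expect the main obstacle to be precisely the bookkeeping of the $\B$- and $\C$-ranges under the reversed canonical relabelling, together with making the elimination of configuration~$\B$ (especially its $v_0$ entry, where fact three is indispensable) airtight, so that Theorem~\ref{thm:abc} is genuinely left with $\C$ as the unique explanation.
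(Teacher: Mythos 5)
Your proof is correct and follows essentially the same route as the paper: both parts replace the basis forced vertex via $S[v \leftarrow u]$ with $u$ in $T_{v_{g/2+k}}$ (to force $m<k$) and then $T_{v_{g/2+m}}$ (to extract the thread), and both conclude by checking configurations $\A$, $\B$, $\C$ of Theorem~\ref{thm:abc} under the reversed canonical relabelling with $k'=g/2-p$, contradicting that $v$ is basis forced. Your unified parameter $p$ and the explicit translation of the $\B$- and $\C$-index ranges back to the original labelling (which I verified to be exact, including the inclusion of $v_{g/2+p}$ itself in the $\B$-range) are just a cleaner packaging of the two substitutions the paper performs separately.
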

\begin{proof}
According to Theorem~\ref{thm:abc}, the graph $G$ does not contain configuration $\B$ with respect to the set $S$. Thus, we have $\deg(v_i) = 2$ for all $i \in [k+1,g/2-1] \cup [g/2+k+1,g-1]$.

Let us first show that $m < k$. Suppose to the contrary that $m \geq k$. Now, we have $\deg (v_i) = 2$ for all $i \in [1,k-1] \cup [g/2+1,g/2+k-1]$ due to the definition of $m$. The only vertices $v_i$ for which we may have $\deg (v_i) \geq 3$ are $v_0$, $v_k$, $v_{g/2}$ and $v_{g/2+k}$. Let $u$ be $v_{g/2+k}$ or the end-vertex of the thread attached to $v_{g/2+k}$ if such exists (if $b(G)=1$, then $v_0$ is branch-active, and thus the vertex $u$ is well-defined). The graph $G$ clearly does not contain configuration $\A$ with respect to the set $S[v \leftarrow u]$.
Neither does it contain configuration $\B$, since $\deg (v_i) = 2$ for all $i \in [1,k-1] \cup [g/2+k,g/2+k-1]$ and there are no threads without elements of $S[v \leftarrow u]$ attached to $v_0$ or $v_{g/2+k}$.
We also have $\deg (v_i) = 2$ for all $i \in [g/2+k+1,g-1]$, and thus the graph $G$ does not contain configuration $\C$ with respect to the set $S[v \leftarrow u]$.
Therefore, the set $S[v \leftarrow u]$ is a metric basis of $G$ according to Theorem~\ref{thm:abc}, a contradiction.

Suppose then that there does not exist a thread of length at least $m$ at some $v_i$ where $i \in [g/2+m+1,g/2+k]$. Let $u$ be $v_{g/2+m}$ or the end-vertex of the thread attached to $v_{g/2+m}$ if such exists. The set $S[v \leftarrow u]$ is a metric basis according to Theorem \ref{thm:abc}:
\begin{itemize}
\item[$\A$:] Since $1 \leq m < k \leq g/2-1$, the graph $G$ does not contain configuration $\A$ with respect to the set $S[v \leftarrow u]$.

\item[$\B$:] Due to the definition of $m$, we have $\deg (v_i) = 2$ for all $i \in [1,m-1] \cup [g/2+1,g/2+m-1]$. Moreover, if there exists a thread attached to $v_{g/2+m}$, then it contains an element of $S[v \leftarrow u]$. Since the set $S$ is a metric basis of $G$, there is no $S$-free thread at $v_0$. Consequently, there is no $S[v \leftarrow u]$-free thread at $v_0$ either. Thus, the graph $G$ does not contain configuration $\B$ with respect to the set $S[v \leftarrow u]$.

\item[$\C$:] Any thread attached to a vertex $v_i$ where $i \in [g/2+m+1,g/2+k]$ is of length at most $m-1 = g/2-(g/2-m)-1$ (if we label $C$ again the new $k$ would be $g-(g/2+m)=g/2-m$). Thus, the graph $G$ does not contain configuration $\C$ with respect to the set $S[v \leftarrow u]$.
\end{itemize}
Consequently, there exists a metric basis of $G$ that does not contain the basis forced vertex $v$, a contradiction.
\end{proof}

\subsection{Basis Forced Vertex on the Cycle or as a Pendant}\label{sec:cycleorpendant}

Recall that, according to Theorem~\ref{thm:basisforced}, a basis forced vertex of a unicyclic graph is either on the cycle or is a pendant that is attached to the cycle. In this section, we show that there is only a slight structural difference between unicyclic graphs that have a basis forced vertex on the cycle compared to those that have a basis forced vertex as a pendant.

The following lemma states that if $G$ is a unicyclic graph with a basis forced vertex $v_i \in V(C)$, then we can construct a graph with a pendant as a basis forced vertex simply by attaching a pendant to $v_i$. The attached pendant is a basis forced vertex of the new graph.

\begin{lem}\label{lem:pendant}
Let $G$ be a unicyclic graph and let $H$ be the graph we obtain from $G$ by attaching one pendant $u$ to a vertex $v_i$, where $i \in \{0, \ldots , g-1\}$. If the vertex $v_i$ is a basis forced vertex of $G$, then the vertex $u$ is a basis forced vertex of $H$.
\end{lem}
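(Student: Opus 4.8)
The plan is to reduce the statement to the general pendant criterion of Theorem~\ref{thm:gen:pendantforced}. Since $v_i \in V(C)$ is a basis forced vertex, Theorem~\ref{thm:basisforced} forces $V(T_{v_i}) = \{v_i\}$, so $v_i$ has degree $2$ in $G$ and $N_G(v_i) = \{v_{i-1}, v_{i+1}\}$ (its two cycle neighbours). Attaching the pendant $u$ to $v_i$ leaves all distances among the vertices of $G$ unchanged and gives $d_H(r,u) = d_G(r,v_i) + 1$ for every $r \in V(G)$, while $d_H(r,w) = d_G(r,w)$ for $w \in V(G)$. Hence, by Theorem~\ref{thm:gen:pendantforced}, it suffices to show that for each metric basis $R$ of $G$ there is a cycle neighbour $w$ of $v_i$ with $d_G(r,w) = d_G(r,v_i) + 1$ for all $r \in R$.

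To produce such a $w$, I would first pin down the structure of $R$. As $G$ has a basis forced vertex, $g$ is even by Theorem~\ref{thm:even}, and Corollary~\ref{cor:active2} gives $a(R) = 2$. Since $v_i$ lies on the cycle and belongs to $R$ (it is basis forced), it is one of the two $R$-active vertices; let $v_j$ denote the other. Because $V(T_{v_i}) = \{v_i\}$, every element of $R \setminus \{v_i\}$ lies in the component $T_{v_j}$, and so every shortest path from such an element to a cycle vertex $x$ passes through $v_j$, giving $d_G(r,x) = d_G(r,v_j) + d_C(v_j,x)$ where $d_C$ denotes cycle distance.

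Next I would fix the correct neighbour. The key point is that $v_j$ is not antipodal to $v_i$: if $d_C(v_i,v_j) = g/2$ then, in the canonical labelling with $v_j = v_0$, the set $R$ realises configuration $\A$, contradicting that $R$ is resolving (Theorem~\ref{thm:abc}); equivalently this is the bound $k < g/2$ of Lemma~\ref{lem:klimits}. Writing $p = d_C(v_i,v_j) < g/2$ and choosing $w$ to be the neighbour of $v_i$ on the arc \emph{away} from $v_j$, one has $d_C(v_j,w) = p+1$ because $p+1 \le g/2 \le g-p-1$. Then for $r \in R \setminus \{v_i\}$ the formula above yields $d_G(r,w) = d_G(r,v_i)+1$, and for $r = v_i$ the equality $d_G(v_i,w) = 1 = d_G(v_i,v_i)+1$ is immediate. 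This establishes the hypothesis of Theorem~\ref{thm:gen:pendantforced} and finishes the proof.

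The main obstacle is the cycle-distance bookkeeping, specifically justifying that exactly one of the two neighbours of $v_i$ is one step farther from every $r \in R$. This hinges entirely on ruling out the antipodal configuration $d_C(v_i,v_j)=g/2$; once $p < g/2$ is known, the choice of $w$ as the neighbour away from $v_j$ works uniformly, and the remainder is a routine computation. I expect no difficulty beyond carefully separating the two cases $r = v_i$ and $r \in T_{v_j}$.
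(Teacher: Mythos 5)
Your proof is correct and takes essentially the same route as the paper's: both reduce the claim to Theorem~\ref{thm:gen:pendantforced} using Theorem~\ref{thm:basisforced} (so that $V(T_{v_i})=\{v_i\}$), invoke Theorem~\ref{thm:even}, Corollary~\ref{cor:active2} and Lemma~\ref{lem:klimits} to rule out the antipodal placement of the second $R$-active vertex, and then take as witness the cycle neighbour of $v_i$ on the arc away from that vertex (the paper's $v_{g-1}$ in its canonical labelling with $v_0=v_i$). Your explicit arc-length bookkeeping merely fills in the one-line distance equality $d_H(s,u)=d_G(s,v_0)+1=d_H(s,v_{g-1})$ that the paper asserts without detail.
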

\begin{proof}
Let $S$ be a metric basis of $G$. Let $C$ be canonically labelled so that $v_0$ is a basis forced vertex of $G$ (recall that this is possible due to Corollary~\ref{cor:active2}). The graph $G$ has even girth $g$ due to Theorem \ref{thm:even}. According to Corollary \ref{cor:active2} and Lemma \ref{lem:klimits}, we have $a(S) = 2$ and $k < g/2$. According to Theorem \ref{thm:basisforced}, we have $V(T_{v_0}) = \{v_0\}$, and thus $d_H(s,u) = d_G(s,v_0) + 1 = d_H(s,v_{g-1})$ for all $s \in S$. The vertex $u$ is now a basis forced vertex of $H$ due to Theorem \ref{thm:gen:pendantforced}.
\end{proof}

Consider again the graph in Figure~\ref{fig:branching}. The black vertex is a basis forced vertex. As it is on the cycle, we can attach a pendant to it, and the pendant becomes a basis forced vertex of the new graph. The metric bases behave exactly as in the original graph, that is, a metric basis consists of the added pendant and two of the gray vertices.

Constructing a graph with a basis forced vertex on the cycle from a graph that has one as a pendant is not so simple. Indeed, the following lemma gives us two cases where we cannot remove the pendant and have the adjacent cycle vertex become a basis forced vertex.

\begin{lem}\label{lem:nobftocycle}
Let $G$ be a unicyclic graph with a basis forced vertex $v$ that is a pendant. Let $S$ be a metric basis of $G$, and let $C$ be canonically labelled so that $v \in V(T_{v_k})$. If
\begin{enumerate}[label={\em(\roman*)}]
\item $k=g/2-2$ and there are no threads at vertices $v_i$ where $i \in [1,k-1]$, or
\item $k=g/2-1$ and $b(G)=0$,
\end{enumerate}
then the vertex $v_k$ is not a basis forced vertex of the graph $G-v$.
\end{lem}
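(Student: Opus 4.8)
The goal in each case is to produce a metric basis of $G' = G - v$ avoiding $v_k$, which shows $v_k$ is not basis forced in $G'$. I use that $g$ is even (Theorem~\ref{thm:even}), that $a(S) = 2$ with $v_0, v_k$ the only $S$-active cycle vertices (Corollary~\ref{cor:active2}), that $2 \le k < g/2$ (Lemma~\ref{lem:klimits}), and that $V(T_{v_k}) = \{v_k, v\}$ (Theorem~\ref{thm:basisforced}), so that $S \cap V(T_{v_k}) = \{v\}$ and $S \setminus \{v\} \subseteq V(T_{v_0})$. Removing $v$ leaves every thread of $G$ unchanged except that $v_k$ becomes a degree-$2$ cycle vertex, and it changes neither $L$ nor $b$, so $\dim(G') \ge \dim(G)$ by Theorems~\ref{thm:unicdim} and~\ref{thm:bfmdim}. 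Since $S$ avoids configuration $\B$ in $G$ and $0$ lies in the index set of $\B$, the set $S$ has no $S$-free thread at $v_0$. By Lemma~\ref{lem:bfproperties}(iv) the vertices indexed by $[k+1, g/2-1] \cup [g/2+k+1, g-1]$ have degree $2$. The strategy is to replace the active representative $v$ by a cycle vertex so that, in the canonical labelling of the new set, every surviving thread of $G'$ sits at an active vertex or at a cycle index $\ge g/2$, hence outside the index sets of $\B$ and $\C$; Theorem~\ref{thm:abc} then certifies the resulting set is resolving.

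For case (i), with $k = g/2-2$, I put $S' = S[v \leftarrow v_{g/2-1}]$. The vertex $v_{g/2-1} = v_{k+1}$ has degree $2$ by Lemma~\ref{lem:bfproperties}(iv), so it differs from $v_k$ and carries no thread, and $S'$ is a biactive branch-resolving set active at $v_0$ and $v_{g/2-1}$. The original labelling is canonical for $S'$ with $k' = g/2-1$, so $\B$ has index set $\{0, g/2-1\}$ (its second range being empty) and $\C$ has index set $[0, g/2-1]$. In $G'$ the only cycle vertices that can carry threads are $v_0$ and $v_{g/2}, \dots, v_{g-2}$: the vertices $v_1, \dots, v_{g/2-3}$ are threadless by hypothesis and $v_{g/2-2} = v_k$ is threadless in $G'$, while $v_{g/2-1}$ and $v_{g-1}$ have degree $2$. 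The thread at $v_0$ is not $S'$-free since $S' \cap V(T_{v_0}) = S \cap V(T_{v_0})$ already covered it, and $v_{g/2}, \dots, v_{g-2}$ carry indices $\ge g/2$, outside both index sets. Thus $G'$ contains none of $\A, \B, \C$ with respect to $S'$, so $S'$ resolves $G'$; as $|S'| = \dim(G) \le \dim(G')$, it is a metric basis avoiding $v_k$.

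For case (ii), with $k = g/2-1$ and $b(G)=0$, the target $v_{k+1} = v_{g/2}$ is antipodal to $v_0$ and would create configuration $\A$, so I use $b(G)=0$ to move both representatives: set $S' = \{v_1, u\}$, where $u$ is $v_{g/2}$ or the end-vertex of the thread at $v_{g/2}$ (so that this thread is not $S'$-free). Here $g \ge 6$, and by Lemma~\ref{lem:bfproperties}(ii) (with $g/2-k-1 = 0$) there are no threads at $v_1, \dots, v_{g/2-2}$, so $v_1$ differs from $v_k$, and $S'$ is biactive and (since $b(G)=0$) branch-resolving. Labelling from $w_0 = v_1$ in the original direction is canonical with $k' = g/2-1$; then $v_{g/2} = w_{g/2-1}$ is active, while every far thread $v_{g/2+j}$ with $j \ge 1$ and the thread at $v_0$ receive canonical indices $g/2+j-1 \ge g/2$ and $g-1$, all outside the index sets $\{0, g/2-1\}$ of $\B$ and $[0, g/2-1]$ of $\C$. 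The cycle vertices at indices $[0, g/2-1]$, namely $v_1, \dots, v_{g/2}$, are all either active or threadless (with the possible thread at $v_{g/2}$ covered by $u$), so neither $\B$ nor $\C$ arises, and $\A$ fails as $k' \ne g/2$. Hence $S'$ resolves $G'$; it has size $2$ and $\dim(G') \ge \dim(G) = 2$, so it is a metric basis avoiding $v_k$.

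The crux is the configuration bookkeeping, specifically keeping the long far threads guaranteed by Lemma~\ref{lem:mthread} out of the forbidden index sets: these threads are exactly what pin $v$ in $G$, and a careless replacement would re-expose one inside the range of $\B$ or $\C$. The hypotheses are what neutralise them — the value of $k$ empties the second range of $\B$ and sends all far threads to indices $\ge g/2$, while the absence of near threads keeps the range $[0, g/2-1]$ of $\C$ clear — and the split between (i) and (ii) is forced by whether $v_{k+1}$ is antipodal to $v_0$, which dictates whether $v_0$ stays active or must be vacated. The subtle point is that retaining $S' \cap V(T_{v_0}) = S \cap V(T_{v_0})$, together with $S$ avoiding $\B$, is what prevents a free thread from reappearing at $v_0$ in case (i), whereas in case (ii) the hypothesis $b(G)=0$ is precisely what permits vacating $v_0$.
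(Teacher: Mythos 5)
Your proof is correct and follows essentially the same route as the paper's: in case (i) you use the identical replacement $S[v \leftarrow v_{g/2-1}]$, and in case (ii) the identical set $\{v_1,u\}$ with $u$ on or at $v_{g/2}$, verifying in each case via Theorem~\ref{thm:abc} that none of the configurations $\A$, $\B$, $\C$ arises. Your only addition is making explicit the minimality argument (removing $v$ changes neither $L$ nor $b$, so the dimension cannot drop), which the paper leaves implicit.
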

\begin{proof}
(i) Let $R=S[v \leftarrow v_{g/2-1}]$. The graph $G-v$ clearly does not contain configuration $\A$ with respect to the set $R$. Neither does it contain configuration $\B$, because the graph $G$ does not contain configuration $\B$ with respect to the set $S$ and thus $\deg (v_{g/2-1})=2$. Since there are no threads at vertices $v_i$ where $i \in [1,g/2-2]$ (in the graph $G-v$), the graph $G-v$ does not contain configuration $\C$ with respect to the set $R$. Thus, the set $R$ is a metric basis of $G-v$ due to Theorem~\ref{thm:abc}, and $v_k$ is not a basis forced vertex of $G-v$.

(ii) Since $S$ is a metric basis of $G$, the graph $G$ does not contain configuration $\A$, $\B$, or $\C$ with respect to the set $S$. When we remove the pendant $v$ from the graph $G$ and replace $v$ in the set $S$ with its neighbour $v_k$, the graph $G-v$ clearly does not contain configuration $\A$, $\B$, or $\C$ with respect to the set $S[v \leftarrow v_k]$. Thus, the set $S[v \leftarrow v_k]$ is a metric basis of $G-v$ according to Theorem~\ref{thm:abc}.

Since $b(G)=0$, we have $\dim (G) = 2$ due to Theorem~\ref{thm:bfmdim}. Let $R=\{u,v_1\}$, where $u$ is $v_{g/2}$ or the end-vertex of the thread attached to $v_{g/2}$ if such exists. The set $R$ is a metric basis of $G-v$ due to Theorem~\ref{thm:abc}:
\begin{itemize}
\item[$\A$:] The graph $G-v$ clearly does not contain configuration $\A$ with respect to the set $R$.

\item[$\B$:] Due to Lemma~\ref{lem:bfproperties}(ii), we have $\deg_G(v_i)=2$ for all $i \in [1,g/2-2]$. Thus, there are no $R$-free threads at $v_1$. There are no $R$-free threads at $v_{g/2}$ either, since $b(G)=0$ and if there does exist a thread at $v_{g/2}$, then it contains the vertex $u$. Consequently, the graph $G-v$ does not contain configuration $\B$ with respect to the set $R$.

\item[$\C$:] As we saw before, $\deg_{G} (v_i) = 2$ for all $i \in [1,g/2-2]$. Due to Theorem \ref{thm:basisforced}, $\deg_{G-v} (v_{g/2-1}) = 2$. Thus, the graph $G-v$ does not contain configuration $\C$ with respect to the set $R$.
\end{itemize}
Thus, there exists a metric basis of $G-v$ that does not contain $v_k$. Consequently, $v_k$ is not a basis forced vertex of $G-v$.
\end{proof}

The graphs in Figures~\ref{fig:siili} and~\ref{fig:vajaasiili} both fall under case~(ii) of Lemma~\ref{lem:nobftocycle}. Thus, if we remove one of the basis forced vertices (illustrated in black) from one of these graphs, the adjacent cycle vertices are not basis forced vertices of the resulting graphs.

In addition to the two cases presented in Lemma~\ref{lem:nobftocycle}, we have one more requirement in order to have a basis forced vertex on the cycle.

\begin{lem}\label{lem:cyclebfthread}
Let $G$ be a unicyclic graph with a basis forced vertex $v$. If $v \in V(C)$, then there is a thread at the vertex antipodal to $v$. (If $v=v_i$, then there is a thread at $v_{g/2+i}$.)
\end{lem}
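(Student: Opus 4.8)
The plan is to prove the contrapositive: if there is no thread at the antipodal vertex $v_{g/2+i}$, then $v=v_i$ cannot be a basis forced vertex. By Corollary~\ref{cor:active2} we may canonically label $C$ so that $v=v_0$ is the basis forced vertex sitting in $T_{v_0}$, so the antipodal vertex is $v_{g/2}$. By Theorem~\ref{thm:even} and Lemma~\ref{lem:klimits} we have even girth $g$ and $2\le k<g/2$ for any metric basis $S$, where $v_0$ and $v_k$ are the only two $S$-active vertices. The goal is to exhibit a metric basis that avoids $v_0$, thereby contradicting that $v_0$ is forced.

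**Constructing a replacement basis.** First I would fix a metric basis $S$ and assume, toward a contradiction, that $\deg(v_{g/2})=2$, i.e.\ there is no thread at $v_{g/2}$. The natural candidate for a new basis is $R = S[v_0 \leftarrow v_{g/2}]$; that is, replace the forced vertex $v_0$ by the antipodal cycle vertex $v_{g/2}$. Since $v_0$ was a degree-one-type vertex on the cycle (by Theorem~\ref{thm:basisforced}, $V(T_{v_0})=\{v_0\}$), moving the $S$-active vertex from $v_0$ to $v_{g/2}$ keeps the set biactive and branch-resolving: the branch-active vertex (if $b(G)=1$) is $v_k$ or lies in $T_{v_k}$ and is untouched, while $v_0$ contributed nothing branch-wise. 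So $R$ is still a biactive branch-resolving set, and by Theorem~\ref{thm:abc} it suffices to check that $G$ contains none of the configurations $\A$, $\B$, $\C$ with respect to $R$.

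**Checking the configurations for $R$.** Relabel $C$ canonically with respect to $R$: the two $R$-active vertices are now $v_k$ and $v_{g/2}$, which are at cyclic distance $g/2-k$ apart, so after relabelling the new index is $k'=g/2-k$, and since $2\le k<g/2$ we get $1\le k'<g/2$. For configuration $\A$: it requires $k'=g/2$, which fails. For $\B$: I would use the properties from Lemma~\ref{lem:bfproperties} — namely $\deg(v_i)=2$ for $i\in[k+1,g/2-1]\cup[g/2+k+1,g-1]$ and the thread-length bounds for $i\in[1,k-1]$ — together with the new assumption $\deg(v_{g/2})=2$, to rule out any $R$-free thread in the forbidden index range around the new active vertices; the threads that existed at $v_1,\dots,v_{k-1}$ now lie on the ``far side'' of the new labelling and must be checked to be short enough or outside the range. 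For $\C$: the relevant $R$-free threads must have length at least $g/2-k'=k$ attached near the active vertices; here I would invoke the thread-length bound (ii) of Lemma~\ref{lem:bfproperties}, which caps threads at $v_i$ ($1\le i\le k-1$) by $g/2-k-1<k$, and the degree-2 conditions to exclude configuration $\C$ as well.

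**Main obstacle.** The delicate part will be configuration $\B$ and $\C$ after relabelling: the threads at $v_1,\dots,v_{k-1}$, harmless for $S$, could in principle reappear in a forbidden position for $R$, and the bookkeeping of which index interval they land in (and whether their lengths stay below the new thresholds) is where the argument must be watertight. I expect the assumption $\deg(v_{g/2})=2$ to be exactly what closes configuration $\B$ at the new active vertex $v_{g/2}$, while Lemma~\ref{lem:bfproperties}(ii)–(iv) supplies the degree-$2$ and length control needed for the remaining indices. Once all three configurations are excluded, Theorem~\ref{thm:abc} certifies that $R$ is a resolving set of the same cardinality as $S$, hence a metric basis avoiding $v_0$, contradicting that $v_0$ is basis forced; therefore a thread at $v_{g/2}$ must exist.
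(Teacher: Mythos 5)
Your high-level strategy (remove the forced vertex from a metric basis $S$, insert a substitute, and certify the new set via Theorem~\ref{thm:abc}) is exactly the paper's, but your choice of substitute is wrong, and the proof fails at precisely the step you flag as delicate. In your labelling (forced vertex $v_0$ on the cycle, second active vertex $v_k$, $2\le k<g/2$), the set $R=S[v_0\leftarrow v_{g/2}]$ has active pair $v_k,v_{g/2}$ with canonical index $k'=g/2-k$, and translating the forbidden index range of configuration $\B$ back to the original labels gives $\{k\}\cup[g/2,\,g/2+k-1]\cup[1,\,k-1]$. This zone covers exactly the two arcs where threads are \emph{guaranteed} to exist because $v_0$ is a basis forced vertex on the cycle: by Lemma~\ref{lem:bfproperties}(iii) (the alternative ``no basis forced vertex on the cycle'' is excluded here) there is a thread of length $g/2-k-1\ge 1$ at some $v_i$ with $i\in[1,k-1]$ whenever $k\le g/2-2$; and by Lemma~\ref{lem:mthread} there is a thread at some vertex of the far arc $[g/2,\,g/2+k-m-1]$, which under your assumption $\deg(v_{g/2})=2$ lies in $[g/2+1,\,g/2+k-1]$ (unless $m=k-1$, in which case Lemma~\ref{lem:mthread} alone already contradicts the assumption). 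These threads contain no vertex of $R$, so $G$ contains configuration $\B$ with respect to $R$, and by Theorem~\ref{thm:abc} the set $R$ is \emph{not} a resolving set; no contradiction is obtained, so the argument collapses ($\A$ and $\C$ are indeed fine for your $R$, as you argued --- the failure is solely $\B$). A concrete instance: take the graph of Figure~\ref{fig:branching} with the pendant at the vertex antipodal to the black vertex deleted, and let $S$ consist of the black vertex and two leaves of the branching vertex; after your swap, the remaining cycle pendant is an $R$-free thread in the $\B$-zone, and indeed that pendant and the cycle neighbour of its support vertex get identical distance vectors with respect to $R$.

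The paper avoids this by moving the active vertex one step along the \emph{short} arc rather than jumping to the antipode: in its labelling (forced vertex $v_k$), it sets $R=S[v_k\leftarrow u]$ with $u=v_{k-1}$ or the end-vertex of the thread at $v_{k-1}$. This decreases the canonical index by one, so the $\B$-forbidden zone grows by only three indices: $k-1$ (whose thread, if any, contains $u$ and is not $R$-free), $k$ (threadless, since $V(T_{v_k})=\{v_k\}$ by Theorem~\ref{thm:basisforced}), and the antipode $g/2+k$ --- which is exactly, and only, where the no-antipodal-thread hypothesis is used. Configuration $\C$ is then harmless because the length cap $g/2-k-1$ of Lemma~\ref{lem:bfproperties}(ii) stays below the new threshold $g/2-k+1$. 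So the repair of your proof is to replace $v_0$ by $v_1$ (or the end-vertex of the thread at $v_1$) in your labelling, not by $v_{g/2}$.
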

\begin{proof}
Let $S$ be a metric basis of $G$, and let $v_k$ be a basis forced vertex. Now $g$ is even and $2 \leq k < g/2$ due to Theorem \ref{thm:even} and Lemma~\ref{lem:klimits}.

Suppose to the contrary that there is no thread at $v_{g/2+k}$ (\emph{i.e.} $\deg (v_{g/2+k}) = 2$). Let $u$ be $v_{k-1}$ or the end-vertex of the thread at $v_{k-1}$ if such a thread exists. Let $R=S[v_k \leftarrow u]$. Due to Theorem \ref{thm:abc}, the graph $G$ does not contain configuration $\B$ with respect to the set $S$. Thus, there are no threads at vertices $v_i$ where $i \in [k+1,g/2-1] \cup [g/2+k+1,g-1]$. Since there is no thread at $v_{g/2+k}$ by assumption and no thread at $v_k$ by Theorem \ref{thm:basisforced2}, there are no $R$-free threads at vertices $v_i$ where $i \in [k-1,g/2-1] \cup [g/2+k,g-1] \cup \{0\}$. Thus, the graph $G$ does not contain configuration $\B$ with respect to the set $R$. It is clear that the graph $G$ does not contain configuration $\A$ or $\C$ with respect to the set $R$ either. Thus, the set $R$ is a metric basis of $G$ according to Theorem \ref{thm:abc}, a contradiction.
\end{proof}

Excluding the two cases in Lemma~\ref{lem:nobftocycle}, the condition in Lemma~\ref{lem:cyclebfthread} is sufficient for the case where $b(G)=1$. Indeed, the following lemma states that when $b(G)=1$ and $G$ contains a pendant $v$ that is a basis forced vertex, we can remove the pendant $v$ and the adjacent cycle vertex becomes a basis forced vertex of the resulting graph as long as the thread described by Lemma~\ref{lem:cyclebfthread} is present and we are not considering one of the two cases of Lemma~\ref{lem:nobftocycle} (cf. Lemma~\ref{lem:bfproperties}(iii)).

\begin{lem}\label{lem:pendanttocycle}
Let $G$ be a unicyclic graph with $b(G)=1$ and one basis forced vertex $v$ that is a pendant. Let $S$ be a metric basis of $G$, and let $C$ be canonically labelled so that $v_0$ is branch-active and $v \in V(T_{v_k})$. If there exists a thread of length $g/2 - k - 1$ at some $v_i$ where $i \in [1,k-1]$, then the vertex $v_k$ is a basis forced vertex of $G-v$ if and only if there exists a thread attached to the vertex $v_{g/2+k}$ (\emph{i.e.} the vertex antipodal to $v_k$).
\end{lem}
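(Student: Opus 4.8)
The statement is an equivalence, and I would prove the two implications separately. The \emph{necessity} direction is immediate: if $v_k$ is a basis forced vertex of $G-v$, then $G-v$ is again unicyclic with the same cycle $C$ and $v_k\in V(C)$, so Lemma~\ref{lem:cyclebfthread} applies verbatim to $G-v$ and yields a thread at the vertex antipodal to $v_k$, which is exactly $v_{g/2+k}$. (By Theorem~\ref{thm:basisforced} the removal of the pendant is consistent with $V(T_{v_k})=\{v_k\}$ in $G-v$.)

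For \emph{sufficiency}, assume there is a thread at $v_{g/2+k}$ and show every metric basis of $G-v$ contains $v_k$. First I would record the ambient structure: deleting the pendant $v$ leaves $b(G-v)=1$ with $v_0$ still the unique branch-active vertex, $L(G-v)=L(G)$, and $V(T_{v_k})=\{v_k\}$ in $G-v$. Since any resolving set is biactive and branch-resolving (Lemma~\ref{lem:Sedlar1}\ref{lem:S1:resolvingbranch}), a metric basis needs the $L(G)$ branch-resolving elements inside $T_{v_0}$ together with at least one more active vertex, so $\dim(G-v)\ge L(G)+1$; conversely $R''\cup\{v_k\}$, where $R''=S\setminus\{v\}$ is the branch-resolving part of the given $S$, passes the test of Theorem~\ref{thm:abc} — no $\A$ because $k<g/2$, no $\B$ because $S$ already avoids $\B$ at $v_0$ and the positions $[k+1,g/2-1]\cup[g/2+k+1,g-1]$ carry no threads, and no $\C$ because the near threads have length at most $g/2-k-1$. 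Hence $\dim(G-v)=L(G)+1$, every metric basis $S'$ of $G-v$ has the form $R''\cup\{x\}$ with exactly two active vertices $v_0$ and some $v_p$, and $v_k\in S'$ is equivalent to $p=k$.

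Now suppose for contradiction that $p\ne k$. The decisive leverage is that $v$ is basis forced in $G$: if $S'$ (which omits $v$) resolved $G$, it would be a metric basis of $G$ avoiding $v$. So $S'$ fails to resolve $G$, and as it resolves all of $G-v$, the only possible unresolved pair is $(v,w)$ for some $w$, i.e. $d(s,w)=d(s,v_k)+1$ for all $s\in S'$. Evaluating this at an $s\in S'\cap T_{v_0}$ forces $d(v_0,w)=k+1$, and evaluating it at $x\in T_{v_p}$ forces $d(v_p,w)=d(v_p,v_k)+1$; using the thread-position restrictions of Lemma~\ref{lem:bfproperties} these constraints pin $w$ to the cycle neighbour $v_{k+1}$, and the second one then places $v_p$ on the arc $[0,k]\cup[g/2+k+1,g-1]$, i.e. $p\in[1,k-1]\cup[g/2+k+1,g-1]$. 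Finally I would produce a forbidden configuration in each case via Theorem~\ref{thm:abc}: if $p\in[1,k-1]$, then in the canonical labelling (index $k'=p$) the hypothesised thread at $v_{g/2+k}$ is $S'$-free and lies in the $\B$-range $[g/2+p+1,g-1]$; if $p\in[g/2+k+1,g-1]$, then after reflecting to the canonical labelling ($k'=g-p$) the near thread of length $g/2-k-1$ becomes $S'$-free and lands in the $\B$-range $[3g/2-p+1,g-1]$. In both cases $S'$ contains configuration $\B$, contradicting that it is a resolving set; hence $p=k$, so $v_k\in S'$.

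The main obstacle is the middle step of the sufficiency argument, namely localizing the unresolved partner $w$ to the neighbour $v_{k+1}$: one must exclude $w=v_{k-1}$ (which already fails against any $s\in T_{v_0}$ since $d(v_0,v_{k-1})=k-1$), the far cycle vertex $v_{g-k-1}$, and thread vertices $w$. This localization is precisely what eliminates the awkward antipodal-side placements $p\in[g/2+1,g/2+k]$ that no single thread hypothesis can rule out directly, and I expect to lean on Lemma~\ref{lem:bfproperties} together with the thread produced by Lemma~\ref{lem:mthread} to control which vertices carry threads and how long they are. Once $w$ is pinned to $v_{k+1}$, the configuration bookkeeping in the final step is routine.
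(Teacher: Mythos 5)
Your necessity direction (applying Lemma~\ref{lem:cyclebfthread} to $G-v$) and your structural setup for sufficiency (every metric basis of $G-v$ has the form $R''\cup\{x\}$ with $x\in T_{v_p}$, $p\neq 0$, and $R''\cup\{v_k\}$ passes the test of Theorem~\ref{thm:abc}) agree with the paper. The genuine gap is the localization step, which is not merely unfinished (you yourself flag it as the ``main obstacle'') but actually false: the unresolved partner $w$ of $v$ need not be $v_{k+1}$. Concretely, take $g=12$, $k=3$, let $T_{v_0}$ consist of $v_0$ joined to a vertex $y$ carrying two pendant leaves $a,b$, let $v$ be the pendant at $v_3$, and attach a thread of length $2$ at $v_2$ and a thread $v_9w_1w_2$ at $v_9$; one checks via Theorem~\ref{Thm_pendant_char} that $v$ is basis forced in $G$ and all hypotheses of the lemma hold (with $m=2$). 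For $S'=\{a,v_5\}$, i.e.\ $p=5\in[k+1,g/2-1]$, the pair $(v,v_8)=(v,v_{g-k-1})$ is unresolved in $G$: $d(a,v)=d(a,v_8)=6$ and $d(v_5,v)=d(v_5,v_8)=3$. For $S'=\{a,v_6\}$, i.e.\ $p=g/2$, the thread vertex $w_1$ works: $d(a,v)=d(a,w_1)=6$ and $d(v_6,v)=d(v_6,w_1)=4$. So your constraints $d(s,w)=d(s,v_k)+1$ admit solutions $w\neq v_{k+1}$ precisely for the middle placements $p\in[k+1,g/2]\cup[g/2+1,g/2+k]$ --- exactly the cases your detour through $G$-resolvability was designed to eliminate --- and your subsequent restriction of $p$ to $[1,k-1]\cup[g/2+k+1,g-1]$ does not follow. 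Your final configuration-$\B$ bookkeeping covers only those two outer ranges, so the middle placements are left unhandled.

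The paper never passes through resolvability in $G$ at all: it shows directly that $S[v\leftarrow u]$ with $u\in T_{v_i}$, $i\neq k$, fails Theorem~\ref{thm:abc} \emph{in $G-v$}, exhibiting configuration $\B$ for $i\in[1,k-1]$ (via the antipodal thread at $v_{g/2+k}$), configuration $\C$ for $i\in[k+1,g/2-1]$ (via the hypothesized thread of length $g/2-k-1$), configuration $\A$ for $i=g/2$, configuration $\C$ for $i\in[g/2+1,g/2+m]$ (via the thread supplied by Lemma~\ref{lem:mthread}), and configuration $\B$ for $i\in[g/2+m+1,g-1]$ (via the definition of $m$). Note that your own $\B$-arguments for the outer ranges already follow this pattern and require no localization of $w$; repairing your proof amounts to deleting the $w$-localization entirely and replacing it with these three configuration checks for the middle placements.
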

\begin{proof}
Recall that by Lemma~\ref{lem:klimits} we have $k \leq g/2-1$. If $v_k$ is a basis forced vertex of $G-v$, then there exists a thread attached to the vertex $v_{g/2+k}$ due to Lemma~\ref{lem:cyclebfthread}.

Suppose then that there exists a thread attached to the vertex $v_{g/2+k}$. The set $S[v \leftarrow v_k]$ is clearly a metric basis of $G-v$. Since $b(G-v)=1$, all metric bases of $G-v$ are of the form $S[v \leftarrow u]$, where $S$ is a metric basis of $G$ and $u \in V(T_{v_i})$ for some $i\neq 0$. Denote $m = \min \{ j \geq 1 \ | \ \deg_G(v_j) \geq 3 \text{ or } \deg_G(v_{g/2+j}) \geq 3 \}$. The set $S[v \leftarrow u]$ is not a metric basis if $i \neq k$:
\begin{itemize}
\item If $i \in [1,k-1]$, then the graph $G-v$ contains configuration $\B$ with respect to the set $S[v \leftarrow u]$ due to the thread attached to $v_{g/2+k}$.
\item If $i \in [k+1,g/2-1]$, then the graph $G-v$ contains configuration $\C$ with respect to the set $S[v \leftarrow u]$.
\item If $i = g/2$, then the graph $G-v$ contains configuration $\A$ with respect to the set $S[v \leftarrow u]$.
\item If $i \in [g/2+1,g/2+m]$, then the graph $G-v$ contains configuration $\C$ with respect to the set $S[v \leftarrow u]$, because due to Lemma~\ref{lem:mthread} there exists a thread at some $v_j$, where $j \in [g/2+m+1,g-1]$.
\item If $i \in [g/2+m+1,g-1]$, then the graph $G-v$ contains configuration $\B$ with respect to the set $S[v \leftarrow u]$ due to the definition of $m$.
\end{itemize}
Consequently, $u=v_k$ and $v_k$ is a basis forced vertex of $G-v$.
\end{proof}

\subsection{Characterisation of Unicyclic Graphs With $b(G)=1$ and One Basis Forced Vertex}

In this section we fully characterize the unicyclic graphs and their basis forced vertices when $b(G)=1$. Now we know one spot of the graph where we have elements of a metric basis (in order to have a branch-resolving set). There is only one additional element (due to the metric dimension). The following lemma regarding the vertex that is branch-active.

\begin{lem} \label{Lemma_branch_active_thread}
Let $G$ be a unicyclic graph with $b(G)=1$, and $v$ be branch-active. If $G$ contains a basis forced vertex, then there is no thread attached to $v$.
\end{lem}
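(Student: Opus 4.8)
The plan is to argue by contradiction: assuming that a thread is attached to the branch-active vertex, I would violate the size constraint $\dim(G)=L(G)+1$. First I would fix a metric basis $S$ and canonically label $C$ so that $v=v_0$ is the (unique) branch-active vertex while the basis forced vertex lies in $T_{v_k}$; this labelling is legitimate by Corollary~\ref{cor:active2}, which moreover gives $a(S)=2$, so that $v_0$ and $v_k$ are the only $S$-active vertices. By Theorem~\ref{thm:even} the girth $g$ is even, by Lemma~\ref{lem:klimits} we have $k<g/2$ and hence $k\le\lfloor g/2\rfloor-1$, and by Theorem~\ref{thm:bfmdim} we have $\dim(G)=L(G)+1$, so $|S|=L(G)+1$.

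Now suppose, for contradiction, that there is a thread attached to $v_0$. The crucial point is that the index $0$ belongs to the range of configuration $\B$. Since $S$ is a metric basis, Theorem~\ref{thm:abc} forbids configuration $\B$ with respect to $S$; as $k\le\lfloor g/2\rfloor-1$, this rules out any $S$-free thread at $v_0$. Consequently $S$ must contain a vertex from every one of the $\ell(v_0)\ge 1$ threads attached to $v_0$, i.e.\ from $\ell(v_0)$ of them, not merely from $\ell(v_0)-1$.

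To reach the contradiction I would count the vertices of $S$ lying in $T_{v_0}$. Because $b(G)=1$, every branching vertex of $G$ lies in $T_{v_0}$, so $L(G)=\sum_{w\in V(T_{v_0}),\,\ell(w)>1}(\ell(w)-1)$. Branch-resolvingness (Lemma~\ref{lem:Sedlar1}\ref{lem:S1:resolvingbranch}, applicable since $S$ is resolving) already forces $S$ to meet $\ell(w)-1$ threads at each such $w$; these threads are pairwise disjoint, so they account for $L(G)$ distinct vertices of $S\cap T_{v_0}$. The extra requirement from configuration $\B$, that all $\ell(v_0)$ threads at $v_0$ be met, then forces at least one further vertex of $S$ inside a thread of $v_0$, disjoint from all previously counted vertices. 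Hence $|S\cap T_{v_0}|\ge L(G)+1$. On the other hand, the basis forced vertex lies in $T_{v_k}$, so $|S\cap T_{v_k}|\ge 1$, and since $a(S)=2$ we get $|S\cap T_{v_0}|=|S|-|S\cap T_{v_k}|\le L(G)$, a contradiction.

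The genuinely delicate step is this final count: I must treat uniformly the two cases $\ell(v_0)=1$ (where $v_0$ contributes nothing to $L(G)$, yet configuration $\B$ still forces covering its single thread) and $\ell(v_0)\ge 2$ (where $v_0$ contributes $\ell(v_0)-1$ to $L(G)$ but configuration $\B$ demands one more covered thread). The fact that a thread is a branching-free pendant path—so a vertex placed in a thread of $v_0$ can never serve as a branch-resolving vertex for a deeper branching vertex—is what guarantees that the ``extra'' vertex is genuinely additional, yielding the strict gain of $+1$ in every case.
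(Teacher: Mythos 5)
Your proof is correct and takes essentially the same approach as the paper's: both rest on $\dim(G)=L(G)+1$ (Theorem~\ref{thm:bfmdim}), the identity $L(G)=\sum_{w\in V(T_{v_0}),\,\ell(w)>1}(\ell(w)-1)$ with the basis forced vertex accounting for the one element of $S$ outside $T_{v_0}$, and the fact that an $S$-free thread at $v_0$ creates configuration $\B$, forbidden by Theorem~\ref{thm:abc}. The paper simply runs the same pigeonhole in the opposite direction (concluding that some thread at $v_0$ must be $S$-free and exhibiting configuration $\B$), whereas you derive the cardinality contradiction $|S\cap V(T_{v_0})|\ge L(G)+1$; your explicit handling of the cases $\ell(v_0)=1$ and $\ell(v_0)\ge 2$ merely spells out details the paper leaves implicit.
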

\begin{proof}
Let $v_0$ be branch-active, $u$ basis forced, and $S$ a metric basis of $G$. Since $b(G)=1$, we have $\dim (G) = L(G) + 1$ due to Theorem \ref{thm:bfmdim}. Furthermore,
\[ |S \cap V(T_{v_0})| = L(G) = \sum_{v\in V(T_{v_0}), \ell(v) > 1} (\ell(v) - 1). \]
Thus, if there exist a thread or threads at $v_0$, then one of them is $S$-free, and $G$ contains configuration $\B$ (a contradiction).
\end{proof}

Recall that, by Theorem~\ref{thm:basisforced}, a basis forced vertex of a unicyclic graph is either a cycle vertex of degree two or a sole pendant attached to a cycle vertex. In the following theorem, we are now ready to characterise the basis forced vertices in pendants.

\begin{thm} \label{Thm_pendant_char}
	Let $G$ be a unicyclic graph with $b(G)=1$ and let $C$ be its cycle labelled in such a way that $v_0$ is branch-active. Assume further that $v$ is a pendant attached to $v_j \in C$ and $V(T_{v_j}) = \{v_j,v\}$. Now $v$ is a basis forced vertex of $G$ if and only if
	\begin{enumerate}[label={\em(\arabic*)}]
		\item the girth $g$ of $G$ is even,
		\item no thread is attached to $v_0$,
		\item $j \in [2,g/2-1]$,
		\item $\deg (v_i) = 2$ for all $i \in [j+1,g/2-1] \cup [g/2+j+1,g-1]$,
		\item every thread attached to some $v_i$ where $i \in [1,j-1]$ is of length at most $g/2-j-1$,
		\item for $m = \min \{ l \geq 1 \ | \ \deg(v_l) \geq 3 \text{ or } \deg(v_{g/2+l}) \geq 3 \}$ we have $m < j$ and there exists a thread of length at least $m$ at some $v_i$ where $i \in [g/2+m+1,g/2+j]$, and
		\item $j = g/2-2$ or there exists a thread of length $g/2-j-1$ at some $v_i$ where $i \in [1,j-1]$.  	
	\end{enumerate}
\end{thm}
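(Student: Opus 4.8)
The plan is to prove the characterization by showing that conditions (1)--(7) are exactly the constraints forced on the graph by the various lemmas already established, and conversely that these conditions suffice to make $v$ a basis forced vertex. For the forward direction ($\Rightarrow$), I would collect the necessary conditions one at a time, since almost every item corresponds directly to a prior result. Specifically, assuming $v$ is a basis forced vertex: condition (1) is immediate from Theorem~\ref{thm:even}; condition (2) follows from Lemma~\ref{Lemma_branch_active_thread}; the bound $j \in [2, g/2-1]$ in (3) follows from Lemma~\ref{lem:klimits} (noting that after canonical labelling the component $T_{v_j}$ containing $v$ plays the role of $T_{v_k}$, so $k = j$); condition (4) is Lemma~\ref{lem:bfproperties}(iv); condition (5) is Lemma~\ref{lem:bfproperties}(ii); condition (6) is exactly Lemma~\ref{lem:mthread}; and condition (7) is Lemma~\ref{lem:bfproperties}(iii), where the ``$k = g/2-2$ and no basis forced vertex on the cycle'' alternative reduces to $j = g/2-2$ because here $v$ is assumed to be a pendant. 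So the forward direction is essentially an assembly of cited results, with care taken that the canonical labelling is compatible with the stated labelling (i.e. $v_0$ branch-active and $k=j$), which Corollary~\ref{cor:active2} guarantees is possible.

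For the converse direction ($\Leftarrow$), I would assume (1)--(7) hold and prove that $v$ lies in every metric basis. The natural strategy is to use Theorem~\ref{thm:abc}: first exhibit a concrete metric basis of $G$ containing $v$ to pin down $\dim(G)$ and confirm $a(S)=2$, then show that any attempt to build a metric basis $S'$ with $v \notin S'$ forces one of the configurations $\A$, $\B$, $\C$. Since $b(G)=1$, Theorem~\ref{thm:bfmdim} gives $\dim(G) = L(G)+1$, and any metric basis consists of the $L(G)$ branch-resolving elements inside $T_{v_0}$ together with exactly one further element $u$ lying in some $T_{v_i}$ with $i \neq 0$. I would take a metric basis $S$ whose extra element is $v$ (verifying via Theorem~\ref{thm:abc} that $S$ is indeed resolving, using (4), (5), (6), (7) to rule out each configuration), and then argue that replacing $v$ by any $u \in V(T_{v_i})$ with $i \neq j$ creates one of the forbidden configurations.

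The heart of the argument is this case analysis over the location $i$ of the replacement vertex $u$, and it mirrors the structure of the proof of Lemma~\ref{lem:pendanttocycle} very closely. I would split the circle into arcs: for $i \in [1,j-1]$, condition (7) supplies a thread of length $g/2-j-1$ that, once the new canonical index drops, produces configuration $\C$ (or, after relabelling, configuration $\B$); for $i \in [j+1, g/2-1]$ the thread at the antipodal position together with (4) forces configuration $\C$; for $i = g/2$ the evenness of $g$ forces configuration $\A$; for $i \in [g/2+1, g/2+m]$ condition (6) guarantees a long thread in $[g/2+m+1, g-1]$ giving configuration $\C$; and for $i \in [g/2+m+1, g-1]$ the definition of $m$ yields configuration $\B$. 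The main obstacle I anticipate is bookkeeping the canonical relabelling correctly in each arc: moving the single free element changes which cycle vertex is $v_0$ and hence recomputes the index $k$ and the antipodal positions, so the arc boundaries and the ``length at least $m$'' thresholds must be re-derived carefully for each case to confirm that the requisite configuration genuinely appears. Once that verification is complete, the only surviving placement is $i = j$, and since Theorem~\ref{thm:basisforced} forces $V(T_{v_j}) = \{v_j, v\}$, the extra element must be $v$ itself, so $v$ belongs to every metric basis.
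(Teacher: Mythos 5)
Your overall architecture coincides with the paper's proof: the forward direction is assembled from exactly the same citations (Theorem~\ref{thm:even}, Lemma~\ref{Lemma_branch_active_thread}, Lemma~\ref{lem:klimits}, Lemma~\ref{lem:bfproperties}(iv), Lemma~\ref{lem:bfproperties}(ii), Lemma~\ref{lem:mthread}, Lemma~\ref{lem:bfproperties}(iii)), and your converse follows the same scheme as the paper --- exhibit $S = R \cup \{v\}$ as a metric basis via Theorem~\ref{thm:abc}, note that every metric basis has the form $R \cup \{u\}$ with $u \in V(T_{v_i})$, $i \neq 0$, and eliminate all $i \neq j$ through the same five arcs.

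However, three concrete points in your converse would fail or are missing as written. First, you invoke Theorem~\ref{thm:bfmdim} to obtain $\dim(G) = L(G)+1$, but that theorem assumes $G$ already has a basis forced vertex --- precisely what the converse is trying to establish, so the citation is circular; the paper instead gets the lower bound from Theorem~\ref{thm:unicdim} and the upper bound from the exhibited resolving set $S$. Second, your mechanism for the arc $i \in [1,j-1]$ is wrong: the thread from condition~(7) has length $g/2-j-1$, while configuration $\C$ with the new canonical index $k = i < j$ would require length at least $g/2-i > g/2-j-1$, so that thread is too short; the correct witness (used in the paper) is the pendant $v$ at $v_j$ itself, which is an $S'$-free thread with $j \in [k+1, g/2-1]$ and produces configuration $\B$ directly. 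Relatedly, your justification for $i \in [j+1, g/2-1]$ (``the thread at the antipodal position together with (4)'') names the wrong witness as well: condition~(4) only asserts certain degrees equal two and cannot supply a thread; the paper uses condition~(7)'s thread of length $g/2-j-1 \geq g/2-i$ at some $v_l$ with $l \in [1,j-1]$ (or, when $j = g/2-2$ and hence $i = g/2-1$, the pendant $v$ itself) to force configuration $\C$. Third, after concluding $i = j$ you are not done: within $V(T_{v_j}) = \{v_j, v\}$ the extra element could still be $v_j$, and this case must be excluded --- the paper observes that taking $u = v_j$ leaves $v$ as an $S'$-free thread at $v_j$, creating configuration $\B$. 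All three defects are repairable within your plan, but as stated the case analysis contains two misattributed mechanisms and one unhandled terminal case.
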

\begin{proof}
	($\Rightarrow$) Assume first that $v$ is a basis forced vertex of $G$. The conditions~(1)--(7) can be shown to be satisfied based on the previously presented results. Indeed, the conditions~(1)--(7) hold by Theorem~\ref{thm:even}, Lemma~\ref{Lemma_branch_active_thread}, Lemma~\ref{lem:klimits}, Lemma~\ref{lem:bfproperties}(iv), Lemma~\ref{lem:bfproperties}(ii), Lemma~\ref{lem:mthread} and Lemma~\ref{lem:bfproperties}(iii), respectively.
	
	($\Leftarrow$) Assume then that the conditions~(1)--(7) hold. Let $R \ (\subseteq V(T_{v_0}))$ be a minimum branch-resolving set of $G$. In what follows, we first show that $S=R \cup \{v\}$ is a resolving set of $G$ due to Theorem \ref{thm:abc}:
	\begin{itemize}
		\item[$\A$:] The graph $G$ does not contain configuration $\A$ with respect to the set $S$ since by~(3) we have $j \in [2,g/2-1]$.
		\item[$\B$:] By~(2), there is no thread attached to $v_0$ and, thus, there is no $S$-free thread at $v_0$. By~(4) and the fact that $V(T_{v_j}) = \{v_j,v\}$, there are no $S$-free threads at $v_j$ or $v_i$ where $i \in [j+1,g/2-1] \cup [g/2+j+1,g-1]$ either. Thus, the graph $G$ does not contain configuration $\B$ with respect to the set $S$.
		\item[$\C$:] By the previous case, there are no $S$-free threads at $v_0$ or $v_j$. Furthermore, by~(5), all the threads attached to some $v_i$ where $i \in [1,j-1]$ are of length at most $g/2-j-1$. Thus the graph $G$ does not contain configuration $\C$ with respect to the set $S$.
	\end{itemize}
	According to Theorem~\ref{thm:unicdim}, we have $\dim (G) = L(G) + 1$. Thus, the set $S$ is a metric basis of $G$.
	
	Observe that all metric bases of $G$ are of the form $R \cup \{u\}$, where $R$ is a minimum branch-resolving set and $u \in V(T_i)$ for some $i \neq 0$. In what follows, we show that if $S'=R \cup \{u\}$ is a metric basis of $G$, then $i=j$:
	\begin{itemize}
		\item $i \in [1,j-1]$: The pendant $v$ at the vertex $v_j$ causes $G$ to contain configuration $\B$ with respect to the set $S'$.
		\item $i \in [j+1,g/2-1]$: Observe first that if $j=g/2-2$, then $i = g/2-1$ and the pendant $v$ causes the graph $G$ to contain configuration $\C$ with respect to the set $S'$. Otherwise, due to the condition~(7), there exists a thread of length $g/2 -j-1 \geq g/2 - i$ at some $v_l$ where $l \in [1,j-1]$. Thus, the graph $G$ contains configuration $\C$ with respect to the set $S'$.
		\item $i = g/2$: The graph $G$ contains configuration $\A$ with respect to the set $S'$.
		\item $i \in [g/2+1,g/2+m]$: Observe first that the distance $d(v_0,v_i) = g-i$. By the condition~(6), there exists a thread of length at least $m \geq g/2-(g-i)$ for some $v_l$ with $l \in [g/2+m+1,g/2+j]$. Hence, $G$ contains configuration~$\C$ with respect to the set $S'$.
		\item $i \in [g/2+m+1,g-1]$: Due to condition 6, there exists an $S'$-free thread at $v_m$ or $v_{g/2+m}$ since $b(G)=1$ and $v_0$ is branch-active. Thus, the graph $G$ contains configuration $\B$ with respect to the set $S'$. 
	\end{itemize}
	Thus, in conclusion, $i=j$. Recall that $V(T_{v_j}) = \{v_j,v\}$. If $u = v_j$, then the vertex $v$ forms an $S'$-free thread and the graph $G$ contains configuration $\B$ with respect to the set $S'$. Therefore, $u=v$ and $v$ is a basis forced vertex of $G$.
\end{proof}

Based on Theorem~\ref{Thm_pendant_char}, we also obtain a characterisation for the basis forced vertices in the cycle as given in the following theorem.
\begin{thm}
	Let $G$ be a unicyclic graph with $b(G)=1$ and $C$ be its cycle labelled in such a way that $v_0$ is branch-active. Assume further that $v_j \in C$ is such that $\deg(v_j) = 2$. Now $v_j$ is a basis forced vertex of $G$ if and only if the conditions~\emph{(1)}--\emph{(6)} of Theorem~\ref{Thm_pendant_char} are met and the following two additional conditions are satisfied:
	\begin{itemize}
		\item[\emph{(7')}] there exists a thread of length $g/2-j-1$ at some $v_i$ where $i \in [1,j-1]$ and
		\item[\emph{(8)}] there exists a thread attached to the vertex $v_{g/2+j}$ (antipodal to $v_j$) .
	\end{itemize}
\begin{proof}
	($\Rightarrow$) Assume first that $v_j$ is a basis forced vertex of $G$. The conditions~(1)--(6) hold as in the case of Theorem~\ref{Thm_pendant_char}. Furthermore, the condition~(7') is satisfied due to Lemma~\ref{lem:bfproperties}(iii) and the condition~(8) follows by Lemma~\ref{lem:cyclebfthread}.
	
	($\Leftarrow$) Assume then that the conditions~(1)--(6), (7') and (8) hold. Suppose to the contrary that $v_j$ is not a basis forced vertex of $G$. Then form a new graph $G'$ from $G$ by adding a pendant $v$ to $v_j$. Now the conditions~(1)--(7) of Theorem~\ref{Thm_pendant_char} are satisfied and $v$ is a basis forced vertex of $G'$. However, by Lemma~\ref{lem:pendanttocycle}, it follows that $v_j$ is basis forced vertex of $G'-v = G$ (a contradiction).
\end{proof}
\end{thm}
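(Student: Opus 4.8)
The plan is to reduce this cycle-vertex characterisation to the pendant characterisation of Theorem~\ref{Thm_pendant_char} by means of the transfer Lemma~\ref{lem:pendanttocycle}, which is precisely designed to bridge a basis forced pendant and a basis forced cycle vertex when $b(G)=1$. Both implications exploit the fact that conditions~(1)--(6) describe structural features of the underlying ``skeleton'' of $G$ that are insensitive to whether the distinguished vertex sits on the cycle or hangs off it as a pendant; only the handling of~(7)/(7') and of~(8) genuinely depends on the placement.

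For the forward direction, I would assume $v_j$ (with $\deg v_j = 2$) is a basis forced vertex. First I would observe that conditions~(1)--(6) follow from exactly the same lemmas invoked in the forward direction of Theorem~\ref{Thm_pendant_char} (namely Theorem~\ref{thm:even}, Lemma~\ref{Lemma_branch_active_thread}, Lemma~\ref{lem:klimits}, Lemma~\ref{lem:bfproperties}(iv), Lemma~\ref{lem:bfproperties}(ii), and Lemma~\ref{lem:mthread}), since none of those arguments used that the basis forced vertex was a pendant rather than a cycle vertex. Condition~(8) is then immediate from Lemma~\ref{lem:cyclebfthread}, which guarantees a thread at the antipodal vertex $v_{g/2+j}$ whenever the basis forced vertex lies on the cycle. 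The one point needing care is~(7'): Lemma~\ref{lem:bfproperties}(iii) offers a disjunction whose second alternative is ``$k=g/2-2$ and there is no basis forced vertex on the cycle.'' Since here the basis forced vertex $v_j$ \emph{is} on the cycle, that alternative is excluded, forcing the genuine existence of a thread of length $g/2-j-1$ at some $v_i$ with $i \in [1,j-1]$, which is exactly~(7').

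For the backward direction, I would assume~(1)--(6), (7'), (8) and suppose for contradiction that $v_j$ is not basis forced. I would attach a new pendant $v$ to $v_j$, forming a graph $G'$ with $V(T_{v_j}) = \{v_j, v\}$. Since $v_j$ now has degree three, it is still not a branching vertex (a cycle vertex must have degree at least four to branch), so $T_{v_j}$ contains no branching vertex and $b(G')=1$ is preserved. I would then verify that conditions~(1)--(7) of Theorem~\ref{Thm_pendant_char} hold for $G'$: conditions~(1)--(6) are unchanged because they only constrain the surrounding structure, and condition~(7) of that theorem is the disjunction whose second disjunct is precisely~(7'), so it holds. Theorem~\ref{Thm_pendant_char} then makes $v$ a basis forced pendant of $G'$. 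Finally I would apply Lemma~\ref{lem:pendanttocycle}: its hypothesis that a thread of length $g/2-j-1$ sits at some $v_i$, $i \in [1,j-1]$, is exactly~(7'), and its equivalence asserts that $v_j$ is basis forced in $G'-v$ if and only if there is a thread at $v_{g/2+j}$, which is~(8). Hence $v_j$ is a basis forced vertex of $G'-v = G$, contradicting the assumption.

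The main obstacle I anticipate is bookkeeping rather than conceptual: I must check that grafting the pendant preserves both $b(G)=1$ and all of conditions~(1)--(6) verbatim, and that the hypotheses of Lemma~\ref{lem:pendanttocycle} are met on the nose. The single most delicate step is the forward-direction argument for~(7'), where the cycle-placement of $v_j$ is exactly what rules out the ``$j=g/2-2$'' escape clause in Lemma~\ref{lem:bfproperties}(iii) and thereby upgrades the weaker condition~(7) of the pendant theorem to the strictly stronger~(7') required here.
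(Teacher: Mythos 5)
Your proposal is correct and follows essentially the same route as the paper: conditions (1)--(6) via the same lemmas, (7') from Lemma~\ref{lem:bfproperties}(iii) and (8) from Lemma~\ref{lem:cyclebfthread} in the forward direction, and in the backward direction the pendant-attachment construction combined with Theorem~\ref{Thm_pendant_char} and Lemma~\ref{lem:pendanttocycle}. Your explicit handling of the disjunction in Lemma~\ref{lem:bfproperties}(iii) (the cycle placement of $v_j$ ruling out the escape clause) and the bookkeeping that grafting the pendant preserves $b=1$ and conditions (1)--(6) merely spell out details the paper leaves implicit.
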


\section{Strong Basis Forced Vertices in Unicyclic Graphs}

In order to use the tool of strong resolving graphs, our first comments are addressed to describe how the strong resolving graph of a unicyclic graph $G$ looks like. To this end, we need to divide the study into two cases, depending on the parity of the unique cycle of $G$. We shall use a similar terminology and notation as already commented, where $G$ is a unicyclic graph with the cycle $C = v_0v_1\cdots v_{g-1}v_0$. We first give some observations and basic results.

\begin{observation}
	Let $G$ be a unicyclic graph with the cycle $C = v_0v_1\cdots v_{g-1}v_0$. Then,
	\begin{itemize}
		\item Any two vertices of degree one in $G$ are MMD. Thus, $N_1(G)$ induces a clique in $G_{SR}$.
		\item Any two diametral vertices of $C$ of degree two in $G$ are MMD, and they induce an edge in $G_{SR}$.
		\item If $v_i\in C$ has degree two, then $v_i$ is MMD with all the vertices $u\in \left(N_1(T_{v_{i+\lfloor g/2\rfloor}})\cup N_1(T_{v_{i+\lceil g/2\rceil}})\right)\setminus C$.
		\item Every cut vertex of $G$ is not MMD with any other vertex of $G$. Thus, it is isolated in $G_{SR}$.
	\end{itemize}
\end{observation}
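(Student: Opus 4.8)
The plan is to derive all four bullets from a single structural fact about geodesics in a unicyclic graph, together with the definition of \emph{maximally distant}. So I would first record the \emph{geodesic lemma}: in a unicyclic $G$, for two cycle vertices $v_a,v_b\in V(C)$ one has $d_G(v_a,v_b)=\min\{|a-b|,\,g-|a-b|\}$ (their cycle distance), and for a vertex $x$ lying in a pendant tree $T_{v_k}$ with $x\notin V(C)$ and any cycle vertex $v_m$ one has $d_G(x,v_m)=d_G(x,v_k)+d_G(v_k,v_m)$, the last summand being the cycle distance. Both equalities hold because $G$ has a unique cycle, so every geodesic leaving $T_{v_k}$ reaches the cycle through the cut vertex $v_k$ and then runs along $C$, and no detour off the cycle can be shorter. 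In particular the diameter of $C$ equals $\lfloor g/2\rfloor$. This lemma is the workhorse for every bullet, so I would establish it first.

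For the first bullet I would use the elementary observation that a leaf is maximally distant from every other vertex: if $u$ is a leaf with unique neighbour $u'$, then any geodesic from $v\neq u$ to $u$ ends with the edge $u'u$, so $d_G(v,u')=d_G(v,u)-1\le d_G(v,u)$; as $N_G(u)=\{u'\}$, this is exactly the maximal-distance condition. Applying it in both directions to two distinct leaves $u,v$ shows they are MMD, so $N_1(G)$ induces a clique in $G_{SR}$. This same leaf observation already supplies half of the third bullet, since the vertices $u$ there (being degree-one vertices of $T_{v_{i+\lfloor g/2\rfloor}}$ or $T_{v_{i+\lceil g/2\rceil}}$ not on $C$) are leaves of $G$ and hence maximally distant from $v_i$.

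For the second and third bullets I would analyse a degree-two cycle vertex $v_i$, whose only neighbours are $v_{i-1},v_{i+1}$. By the geodesic lemma, $v_i$ is maximally distant from a target $y$ iff $d_G(y,v_{i-1})\le d_G(y,v_i)$ and $d_G(y,v_{i+1})\le d_G(y,v_i)$, and these comparisons reduce to cycle distances from the projection of $y$ onto $C$ (namely $y$ itself if $y\in V(C)$, or the attachment point $v_k$ if $y\in T_{v_k}$). In both remaining bullets the relevant projection sits at distance exactly $\lfloor g/2\rfloor$ from $v_i$: for bullet~2 this is the hypothesis that $v_i,v_j$ are diametral on $C$, and for bullet~3 it is the choice $k=i+\lfloor g/2\rfloor$ or $k=i+\lceil g/2\rceil$, each giving cycle distance $\lfloor g/2\rfloor$ to $v_i$. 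Since $\lfloor g/2\rfloor$ is the cycle diameter, $v_{i\pm1}$ are automatically at cycle distance $\le\lfloor g/2\rfloor$ from that projection, so $v_i$ is maximally distant from the target; combined with the target being maximally distant from $v_i$ (a degree-two antipode in bullet~2, a leaf in bullet~3), the pair is MMD and adjacent in $G_{SR}$. Note this needs no even/odd casework once framed through the diameter.

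Finally, for the fourth bullet I would show a cut vertex $c$ is never maximally distant from any $v\neq c$: picking a component $K$ of $G-c$ not containing $v$, connectivity yields a neighbour $w\in K$ of $c$, and since $c$ separates $v$ from $w$ every $v$--$w$ path uses $c$, so $d_G(v,w)=d_G(v,c)+1>d_G(v,c)$. Hence $c$ violates the maximal-distance condition toward every vertex, lies in no MMD pair, and is isolated in $G_{SR}$. The main obstacle is not any individual bullet but pinning down the geodesic lemma rigorously and correctly identifying the cycle projection; once the projection is seen to sit at the diametral distance $\lfloor g/2\rfloor$ from $v_i$, the maximal-distance inequalities and all four statements follow uniformly.
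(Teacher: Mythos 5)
Your proof is correct and complete. The paper states this observation without proof (treating it as immediate), so there is no written argument to compare against; the reasoning you supply --- reducing all distances to cycle distances via the unique-cycle geodesic lemma, noting that a leaf is maximally distant from every other vertex, and disqualifying cut vertices by exhibiting a strictly farther neighbour behind them --- is exactly the intended justification, and your unified treatment of the second and third bullets via the cycle diameter $\lfloor g/2\rfloor$ neatly avoids the even/odd casework.
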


\begin{proposition}\label{pro:constructions}
	Let $G$ be a unicyclic graph with the cycle $C = v_0v_1\cdots v_{g-1}v_0$. Then $G_{SR}$ can be obtained as follows.
	\begin{itemize}
		\item $g$ even: We begin with $g/2$ edges formed by the vertices $v_iv_{i+g/2}$ for every $i\in\{0,\dots, g/2-1\}$. Next, for every $v_j\in C$ of degree at least three in $G$, we substitute $v_j$ by a clique of cardinality $|N_1(T_{v_j})\setminus \{v_j\}|$, and add all possible edges between $v_{j+g/2}$ (or a clique corresponding to it if already added) and the vertices of the added clique. Finally, we add all possible edges between any two vertices belonging to any two different cliques of the added ones in the step above, and the cut vertices of $G$ are added as isolated vertices of $G_{SR}$. Notice that such graph may not be connected, even if we do not consider the isolated vertices.
		\item $g$ odd: We begin with a cycle $C'=v_0v_{(g+1)/2}v_1v_{(g+1)/2+1}v_2v_{(g+1)/2+2}\cdots v_{(g-1)/2}v_0$. Next, for every $v_j\in C$ of degree at least three in $G$, we substitute $v_j$ by a clique of cardinality $|N_1(T_{v_j})\setminus \{v_j\}|$, and add all possible edges between $v_{j+(g-1)/2},v_{j+(g+1)/2}$ (or the cliques corresponding to them if already added) and the vertices of the added clique. Finally, we add all possible edges between any two vertices belonging to any two different cliques of the added ones in the step above, and the cut vertices of $G$ are added as isolated vertices of $G_{SR}$.
	\end{itemize}
\end{proposition}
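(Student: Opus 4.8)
The plan is to first pin down exactly which vertices are non-isolated in $G_{SR}$, then determine all MMD pairs among them, and finally check that the two recipes (for $g$ even and $g$ odd) reproduce precisely this edge set. Throughout I would lean on the following distance-decomposition, which is the workhorse of the argument: since $G$ is unicyclic, deleting $E(C)$ leaves a forest whose tree $T_{v_j}$ meets $C$ only at $v_j$; hence for any $u \in V(T_{v_j})$ and any cycle vertex $v_i$ with $i \neq j$, every shortest $u$–$v_i$ path leaves the tree through $v_j$, so that $d_G(u,v_i) = d_G(u,v_j) + d_C(v_j,v_i)$, where $d_C$ denotes shortest-path distance along $C$. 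In particular distances between two cycle vertices are measured along $C$.

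First I would classify the boundary. A leaf has its unique neighbour strictly closer to every other vertex, so it is maximally distant from everything; a degree-two vertex $v_i \in C$ is maximally distant from any cycle vertex at maximal cycle-distance from it. Conversely every cut vertex fails to be maximally distant from any vertex, since it always has a neighbour in a component separated away from the chosen vertex. In a unicyclic graph the cut vertices are exactly the vertices that are neither leaves nor degree-two cycle vertices (a cycle vertex of degree $\ge 3$ carries a hanging subtree, and every non-leaf vertex of a tree $T_{v_j}$ separates the part beyond it). Thus $\partial(G) = N_1(G) \cup \{\, v_i \in V(C) : \deg(v_i)=2 \,\}$ and all other vertices are isolated in $G_{SR}$; this already accounts for the isolated cut vertices in both constructions and explains why a branching cycle vertex $v_j$ enters $G_{SR}$ only through the clique on its leaves $N_1(T_{v_j})\setminus\{v_j\}$.

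Next I would determine the MMD pairs among boundary vertices in three cases. Two leaves are always MMD (each is maximally distant from the other), so $N_1(G)$ induces a clique; this is the source both of the cliques replacing branching vertices and of the final ``all edges between vertices of different cliques'' step. For a leaf $u \in V(T_{v_j})$ and a degree-two vertex $v_i$, the leaf is automatically maximally distant from $v_i$, while by the decomposition identity $v_i$ is maximally distant from $u$ exactly when $d_C(v_j,v_{i\pm1}) \le d_C(v_j,v_i)$, i.e.\ when $v_i$ is a local maximum of cycle-distance from $v_j$; these local maxima are the single antipode $v_{j+g/2}$ when $g$ is even and the two antipodes $v_{j+(g-1)/2},v_{j+(g+1)/2}$ when $g$ is odd (here it is essential that maximal distance only requires $\le$, so that both odd antipodes qualify although they are adjacent and equidistant from $v_j$). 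Finally, two degree-two cycle vertices are MMD, by the same local-maximum analysis applied in both directions, exactly when they are antipodal on $C$. This reproduces the third bullet of the preceding Observation and is the complete edge list of $G_{SR}$.

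It then remains to read off this list, which is where the two cases diverge. For $g$ even the construction lays down the $g/2$ antipodal edges $v_iv_{i+g/2}$ and substitutes each branching $v_j$ by the clique on $N_1(T_{v_j})\setminus\{v_j\}$, joining it to $v_{j+g/2}$: the scaffold edges between two degree-two vertices survive as the antipodal degree-two MMD pairs, each edge with a branching endpoint is replaced by the corresponding leaf–antipode edges, and adding all edges between distinct cliques realises the remaining leaf–leaf adjacencies. For $g$ odd the two-antipode relation threads the degree-two cycle vertices into the single cycle $C'=v_0v_{(g+1)/2}v_1v_{(g+1)/2+1}\cdots v_{(g-1)/2}v_0$ (consecutive vertices being at cycle-distance $(g-1)/2$), and each branching $v_j$ is joined to both antipodes $v_{j+(g-1)/2}$ and $v_{j+(g+1)/2}$, with the leaf–leaf clique edges added as before. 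I expect the main obstacle to be \emph{completeness} rather than any single computation: one must check that no MMD pair is overlooked (for instance a degree-two cycle vertex paired with a non-antipodal thread leaf, or two non-antipodal cycle vertices) and that the recipe adds no spurious edges, in particular that the harmless redundancy arising when an antipode is itself a branching vertex (so ``joining to the antipode'' and ``joining distinct cliques'' coincide) creates no discrepancy. Careful use of the decomposition identity together with the uniqueness versus duplicity of cycle antipodes according to the parity of $g$ settles both inclusions.
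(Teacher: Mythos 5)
Your proposal is correct and takes essentially the same route as the paper, which states Proposition~\ref{pro:constructions} without an explicit proof, deriving it from the preceding Observation whose four bullets are precisely your boundary classification and your three types of MMD pairs (leaf--leaf, leaf--antipodal degree-two cycle vertex, antipodal degree-two pair), established via the same distance decomposition through the attachment vertex $v_j$. Your write-up additionally makes explicit what the paper leaves implicit, namely the completeness check that $\partial(G)=N_1(G)\cup\{v_i\in V(C):\deg(v_i)=2\}$ (cut vertices being exactly the remaining vertices, hence isolated in $G_{SR}$) and that no further MMD pairs exist, together with the verification that the odd scaffold $C'$ (stepping by $(g+1)/2$, which is invertible modulo $g$) threads all cycle vertices into a single cycle realizing the two-antipode relation.
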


Based on these two results above, we can easily deduce the following reduction.

\begin{proposition}\label{prop:equal-SRG}
	Let $G$ be a unicyclic graph with the cycle $C = v_0v_1\cdots v_{g-1}v_0$ where $S\subseteq V(C)$ is the set of vertices of $C$ of degree larger than two in $G$. Let $G'$ be a unicyclic graph with a cycle $C' = v'_0v'_1\cdots v'_{g-1}v'_0$, where the set $S'\subseteq V(C')$ of vertices of $C'$ of degree larger than two in $G$ satisfies that for every $v'_i\in S'$,
	\begin{itemize}
		\item $v'_i\in S'$ if and only if $v_i\in S$,
		\item $v'_i$ has $|N_1(T_{v_i})\setminus\{v_i\}|$ adjacent pendant vertices, and
		\item the only cut vertex of $G'$ in $T_{v'_i}$ is $v'_i$.
	\end{itemize}
	Then, the strong resolving graphs of $G$ and of $G'$ differ only on $|V(G)|-|V(G')|$ isolated vertices.
\end{proposition}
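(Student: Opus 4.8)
The plan is to reduce the statement to the explicit recipe of Proposition~\ref{pro:constructions} and to observe that this recipe only ever reads off three pieces of data about the input graph, all of which the hypotheses force to coincide for $G$ and $G'$. Those invariants are: the girth $g$ (whose parity decides which of the two cases of Proposition~\ref{pro:constructions} applies); the set of indices $i$ such that the $i$-th cycle vertex has degree larger than two; and, for each such $i$, the cardinality $|N_1(T_{v_i})\setminus\{v_i\}|$ of the clique that replaces the corresponding cycle vertex.

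First I would note that $G$ and $G'$ have cycles of the same length $g$ by hypothesis, so the parity of $g$ agrees and the same case of Proposition~\ref{pro:constructions} governs both. By the first bullet of the statement, $v'_i$ has degree larger than two in $G'$ exactly when $v_i$ has degree larger than two in $G$, so the set of branching indices is identical. Next I would verify that the clique cardinalities match, that is $|N_1(T_{v'_i})\setminus\{v'_i\}| = |N_1(T_{v_i})\setminus\{v_i\}|$ for every such $i$: the third bullet forces $v'_i$ to be the only cut vertex of $T_{v'_i}$, so $T_{v'_i}$ is a star centred at $v'_i$, and the second bullet fixes its number of pendant leaves to be exactly $|N_1(T_{v_i})\setminus\{v_i\}|$. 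Hence the leaf counts coincide.

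With these invariants matched, I would run the construction of Proposition~\ref{pro:constructions} in parallel for the two graphs. Each step --- laying down the initial diametral edges (or the auxiliary cycle in the odd case), substituting each cycle vertex of degree at least three by a clique of the prescribed size, and adding the prescribed edges from each clique to its antipodal position(s) and between distinct cliques --- depends solely on the invariants just shown to agree. Consequently the subgraphs induced on the non-isolated vertices of $G_{SR}$ and $G'_{SR}$ are isomorphic via the natural index-preserving identification, and in particular have the same number of vertices. Since $V(G_{SR})=V(G)$ and $V(G'_{SR})=V(G')$, and since the only non-isolated vertices produced by the recipe are the degree-two cycle vertices together with the branch-leaves forming the cliques, every vertex outside the matched part is isolated on both sides; their counts therefore differ by exactly $|V(G)|-|V(G')|$, which is the assertion.

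The step I expect to be the crux is the clique-cardinality matching: one must be careful that collapsing each branch $T_{v_i}$ to a star preserves precisely the leaf number $|N_1(T_{v_i})\setminus\{v_i\}|$, since this is the only feature of the internal shape of $T_{v_i}$ that feeds into $G_{SR}$ --- neither the number of threads at $v_i$, nor the lengths of those threads, nor the total size of $T_{v_i}$ plays any role once the degree of $v_i$ is known to exceed two.
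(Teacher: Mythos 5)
Your proposal is correct and matches the paper's (implicit) argument exactly: the paper states this proposition as an immediate consequence of Proposition~\ref{pro:constructions}, since the recipe for $G_{SR}$ depends only on the cycle length, the set of branching cycle vertices, and the leaf counts $|N_1(T_{v_i})\setminus\{v_i\}|$ --- precisely the three invariants you match --- with all remaining vertices (cut vertices) isolated. Your fleshing out of the clique-cardinality check and the final count of $|V(G)|-|V(G')|$ isolated vertices is exactly what the paper leaves to the reader.
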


An example of a unicyclic graph $G$, its related graph $G'$ (as described in the proposition above), and their strong resolving graph (without the isolated vertices) are given in Figure \ref{fig:G-G'-GSR}.

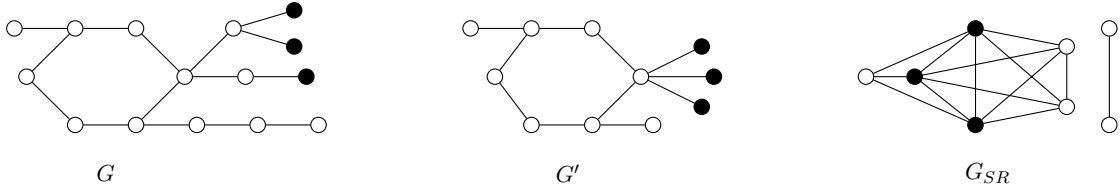
\begin{figure}[ht]
	\centering
	\begin{tikzpicture}[scale=.8, transform shape]
		
		\node [draw, shape=circle, scale=.7] (a0) at  (-0.8,0.8) {};
		\node [draw, shape=circle, scale=.7] (a1) at  (0,1.6) {};
		\node [draw, shape=circle, scale=.7] (a2) at  (1,1.6) {};
		\node [draw, shape=circle, scale=.7] (a3) at  (1.8,0.8) {};
		\node [draw, shape=circle, scale=.7] (a4) at  (1,0) {};
		\node [draw, shape=circle, scale=.7] (a5) at  (0,0) {};
		
		\node [draw, shape=circle, scale=.7] (x1) at  (-1,1.6) {};
		\node [draw, shape=circle, scale=.7] (x3) at  (2.8,0.8) {};
		\node [draw, shape=circle, scale=.7, fill=black] (y3) at  (3.8,0.8) {};
		
		\node [draw, shape=circle, scale=.7] (t3) at  (2.6,1.6) {};
		\node [draw, shape=circle, scale=.7, fill=black] (w3) at  (3.6,1.3) {};
		\node [draw, shape=circle, scale=.7, fill=black] (z3) at  (3.6,1.9) {};
		
		\node [draw, shape=circle, scale=.7] (x4) at  (2,0) {};
		\node [draw, shape=circle, scale=.7] (y4) at  (3,0) {};
		\node [draw, shape=circle, scale=.7] (z4) at  (4,0) {};
		\node at (0.5,-0.8) {$G$};
		
		\draw(a0)--(a1)--(a2)--(a3)--(a4)--(a5)--(a0);
		\draw(a1)--(x1);
		\draw(a3)--(x3)--(y3);
		\draw(a3)--(t3)--(w3);
		\draw(t3)--(z3);
		\draw(a4)--(x4)--(y4)--(z4);

		\node [draw, shape=circle, scale=.7] (ba0) at  (6.9,0.8) {};
		\node [draw, shape=circle, scale=.7] (ba1) at  (7.5,1.6) {};
		\node [draw, shape=circle, scale=.7] (ba2) at  (8.5,1.6) {};
		\node [draw, shape=circle, scale=.7] (ba3) at  (9.3,0.8) {};
		\node [draw, shape=circle, scale=.7] (ba4) at  (8.5,0) {};
		\node [draw, shape=circle, scale=.7] (ba5) at  (7.5,0) {};
		
		\node [draw, shape=circle, scale=.7] (bx1) at  (6.5,1.6) {};
		\node [draw, shape=circle, scale=.7, fill=black] (bx3) at  (10.5,0.8) {};
		
		\node [draw, shape=circle, scale=.7, fill=black] (bt3) at  (10.3,1.3) {};
		
		\node [draw, shape=circle, scale=.7, fill=black] (bt4) at  (10.3,0.3) {};
		
		\node [draw, shape=circle, scale=.7] (bx4) at  (9.5,0) {};
		\node at (8.1,-0.8) {$G'$};
		
		\draw(ba0)--(ba1)--(ba2)--(ba3)--(ba4)--(ba5)--(ba0);
		\draw(ba1)--(bx1);
		\draw(ba3)--(bx3);
		\draw(ba3)--(bt3);
		\draw(ba3)--(bt4);
		\draw(ba4)--(bx4);

		\node [draw, shape=circle, scale=.7] (r4) at  (13,0.8) {};
		\node [draw, shape=circle, scale=.7, fill=black] (r) at  (13.8,0.8) {};
		\node [draw, shape=circle, scale=.7, fill=black] (r0) at  (14.8,1.6) {};
		\node [draw, shape=circle, scale=.7, fill=black] (r1) at  (14.8,0) {};
		\node [draw, shape=circle, scale=.7] (r2) at  (16.3,1.3) {};
		\node [draw, shape=circle, scale=.7] (r3) at  (16.3,0.3) {};
		
		\node [draw, shape=circle, scale=.7] (r5) at  (17,1.6) {};
		\node [draw, shape=circle, scale=.7] (r6) at  (17,0) {};
		\node at (15,-0.8) {$G_{SR}$};
		
		\draw(r0)--(r)--(r1);
		\draw(r2)--(r)--(r3);
		\draw(r1)--(r2)--(r0)--(r1)--(r3)--(r0);
		\draw(r2)--(r3);
		\draw(r5)--(r6);
		\draw(r4)--(r);
		\draw(r1)--(r4)--(r0);
		
	\end{tikzpicture}
	\caption{A unicyclic graph $G$, its related graph $G'$ and $G_{SR}$ {\em (}without isolated ones{\em )}.}\label{fig:G-G'-GSR}
\end{figure}

Once given some properties of the strong resolving graph of a unicyclic graph, we are then able to present our results on strong basis forced vertices of such graphs. For instance, it is easy to observe that the bolded vertices are strong basis forced vertices of the graphs $G$ and $G'$ represented in Figure \ref{fig:G-G'-GSR}, since any vertex cover of minimum cardinality in $G_{SR}$ must contain such vertices, or equivalently, these bolded vertices form the set $VC(G_{SR})$. This means that, in contrast with the classical version of metric dimension, a unicyclic graph can contain more than two strong basis forced vertices. Indeed, as we next show, a unicyclic graph can contain as many strong basis forced vertices as we would require.

Let $n\ge 2$ be an integer. We construct a unicyclic graph $G_n$ as follows. We begin with a cycle $C_{2n+2}$. Then, to obtain $G_n$, we add a pendant vertex to exactly $n+2$ consecutive vertices of the cycle $C_{2n+2}$. Now, the strong resolving graph $(G_n)_{SR}$ is isomorphic to a complete graph $K_{n+2}$ with one pendant vertex added to $n$ of its vertices, together with $n+2$ isolated vertices. It can be noted that every vertex cover set of minimum cardinality in $(G_n)_{SR}$ contains the $n$ vertices of $K_{n+2}$ having a pendant vertex as a neighbor, and exactly one vertex of the remaining two vertices of $K_{n+2}$ not having an adjacent pendant vertex. Therefore, these $n$ vertices mentioned before form precisely the set $VC(G_{SR})$, and so, they are strong basis forced vertices of $G$.

On the other hand, based on Proposition \ref{prop:equal-SRG}, we observe that strong basis forced vertices of a unicyclic graph $G$ with cycle $C = v_0v_1\cdots v_{g-1}v_0$ are independent of the structure of the components $T_{v_i}$ for every $v_i$ of $C$. In this sense, from now on we shall only consider unicyclic graphs $G$ having a structure as the graphs $G'$ described in Proposition \ref{prop:equal-SRG}. In the following lemma, we present a couple of useful results.
\begin{lemma}\label{lem:basic-prop}
	Let $G$ be a unicyclic graph with the cycle $C = v_0v_1\cdots v_{g-1}v_0$.
	\begin{itemize}
		\item[{\em (i)}] If every vertex of $C$ has degree at least three, then $G$ has no strong basis forced vertices.
		\item[{\em (ii)}] If there is at most one vertex $v_i$ of $C$ of degree at least three, then $G$ has no strong basis forced vertices.
	\end{itemize}\begin{proof}
	(i) It is clear from the constructions given in Proposition \ref{pro:constructions} that the strong resolving graph $G_{SR}$ has a component isomorphic to a complete graph $K_{n_1(G)}$ and $|V(G)|-n_1(G)$ isolated vertices. Hence, we note that $corona(G_{SR})=V(G_{SR})$, and so $VC(G_{SR})=\emptyset$, which leads to our conclusion.
	
	(ii) If every vertex of $C$ has degree two, then $G$ is a cycle, which clearly has no strong basis forced vertices. Assume now that $G$ has one vertex, say $v_i$, of $C$ of degree at least three. If $g$ is even, then by the construction given in Proposition \ref{pro:constructions}, the graph $G_{SR}$ has $(g-2)/2$ components isomorphic to $K_2$, one trivial component $K_1$, and one component isomorphic to a clique. Hence, we observe that $corona(G_{SR})=V(G_{SR})$, and so $VC(G_{SR})=\emptyset$.

Now, if $g$ is odd, then again by the construction given in Proposition \ref{pro:constructions}, we have that $G_{SR}$ has one trivial component $K_1$, and one component isomorphic to a cycle in which one of its vertices, say $x$, has been substituted by a clique, with all vertices of this clique being adjacent to the two neighbors of $x$ in such cycle. Thus, we again deduce that $VC(G_{SR})=\emptyset$, which altogether allow to conclude that $G$ has no strong basis forced vertices.
\end{proof}\end{lemma}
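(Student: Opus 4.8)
The plan is to work entirely through the strong resolving graph. Recall from the discussion preceding the lemma that the strong basis forced vertices of $G$ are precisely the set $VC(G_{SR}) = V(G_{SR}) \setminus corona(G_{SR})$. Hence it suffices to prove in every case that $corona(G_{SR}) = V(G_{SR})$, i.e.\ that \emph{every} vertex of $G_{SR}$ lies in some maximum independent set; then $VC(G_{SR}) = \emptyset$ and $G$ has no strong basis forced vertices. Since $corona$ of a disjoint union is the union of the $corona$s of the components, I would first use Proposition~\ref{pro:constructions} to write $G_{SR}$ down explicitly and then verify the condition component by component. The elementary building blocks are harmless: an isolated vertex lies in every maximum independent set, and in a clique $K_m$ (in particular in a $K_2$) each vertex by itself is a maximum independent set, so every vertex of such a component is in $corona$.

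For part~(i), I would observe that when every cycle vertex has degree at least three, every $v_j$ is replaced by a clique in the construction of Proposition~\ref{pro:constructions}, and the final step ``add all edges between any two different cliques'' fuses all of these pendant-cliques into a single clique on $n_1(G)$ vertices; meanwhile every cycle vertex is a cut vertex and hence isolated. Thus $G_{SR}$ is $K_{n_1(G)}$ together with isolated vertices, and by the building blocks above $corona(G_{SR}) = V(G_{SR})$. Part~(ii) splits by how many cycle vertices have degree at least three. If none do, $G$ is a cycle and has no strong basis forced vertices, as already noted for cycles. If exactly one vertex $v_i$ does, and $g$ is even, Proposition~\ref{pro:constructions} gives $G_{SR}$ as the disjoint union of one clique (the pendants at $v_i$ together with the antipodal vertex $v_{i+g/2}$ joined to all of them), $(g-2)/2$ copies of $K_2$, and the single isolated vertex $v_i$; each component is of the harmless type, so $VC(G_{SR}) = \emptyset$ again.

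The main obstacle is the remaining subcase: exactly one degree-$\ge 3$ vertex and $g$ odd. Here Proposition~\ref{pro:constructions} does not give a union of cliques but a single nontrivial component that is an odd cycle $C_g$ with one vertex $x$ blown up into a clique $Q$, every vertex of $Q$ being adjacent to the two cycle-neighbours of $x$; this genuinely needs an independent-set argument. I would first show the maximum size of an independent set here equals $(g-1)/2$, the value for $C_g$ itself, by splitting on whether an independent set meets $Q$: at most one vertex of $Q$ may be taken, and taking one forbids both neighbours of $x$ and leaves a path, while taking none leaves a longer path; because $g$ is odd the two resulting paths have even order and both options tie at $(g-1)/2$. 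This parity bookkeeping is the point I would check most carefully.

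I would then produce, for each vertex, a maximum independent set through it by transferring from $C_g$: for $w \in Q$ take a maximum independent set of $C_g$ containing $x$ and replace $x$ by $w$; for a cycle vertex $u$ take a maximum independent set $I$ of $C_g$ containing $u$ (such exists since $C_g$ is vertex-transitive), using $I$ directly if $x \notin I$ and otherwise, noting that then $u$ is not a neighbour of $x$, replacing $x$ by any vertex of $Q$. In each instance the resulting set is still independent in the blown-up component (the only new adjacencies involve $Q$ and the two neighbours of $x$, which are never simultaneously present) and has the maximum size $(g-1)/2$. Thus every vertex of $G_{SR}$ lies in a maximum independent set, so $corona(G_{SR}) = V(G_{SR})$ and $VC(G_{SR}) = \emptyset$, which completes the lemma.
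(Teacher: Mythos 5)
Your proposal is correct and follows essentially the same route as the paper: both use Proposition~\ref{pro:constructions} to write $G_{SR}$ explicitly, analyse it component by component, and conclude $corona(G_{SR})=V(G_{SR})$, hence $VC(G_{SR})=\emptyset$. The only difference is one of detail: where the paper merely asserts $VC(G_{SR})=\emptyset$ for the odd-girth blown-up cycle, you correctly verify it, computing the independence number $(g-1)/2$ via the parity split on whether the clique $Q$ is met and exhibiting a maximum independent set through every vertex.
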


In what follows, we divide the study into two sections based on the parity of the cycle. 

\subsection{Unicyclic graphs with an even cycle}

For the rest of the section, we assume that girth $g$ of the unicyclic graph $G$ is even. In the following lemma, we first present some basic properties on the strong basis forced vertices of such graphs.
\begin{lemma}\label{lem:basic-prop_even}
	Let $G$ be a unicyclic graph such that its girth $g$ is even and the cycle $C = v_0v_1\cdots v_{g-1}v_0$.
	\begin{itemize}
		\item[{\em (i)}] If the diametral vertices $v_i$ and $v_{i+g/2}$ are of degree two, then $v_i$ and $v_{i+g/2}$ are not strong basis forced vertices.
		\item[{\em (ii)}] If for all the pairs of diametral vertices $v_i$ and $v_{i+g/2}$ at least one of them has degree two in $G$, then $G$ has no strong basis forced vertices.
	\end{itemize}
\end{lemma}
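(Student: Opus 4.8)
The plan is to argue entirely inside the strong resolving graph $G_{SR}$, using the reduction recorded before Lemma~\ref{lem:basic-prop}: the strong basis forced vertices of $G$ are exactly the members of $VC(G_{SR})$, i.e. the vertices lying in \emph{every} minimum vertex cover of $G_{SR}$, equivalently the vertices lying in \emph{no} maximum independent set of $G_{SR}$ (so that $corona(G_{SR})=V(G_{SR})$ means there are none). Thus both parts reduce to producing, for the vertices in question, a maximum independent set of $G_{SR}$ that contains them. By Proposition~\ref{prop:equal-SRG} I may assume $G$ has the reduced form, so the non-isolated vertices of $G_{SR}$ are precisely the pendants of $G$ (which form one clique $K=N_1(G)$) together with the degree-two cycle vertices, while every cycle vertex of degree at least three is a cut vertex and hence isolated in $G_{SR}$. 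All adjacencies needed below will be read off from the even-girth construction in Proposition~\ref{pro:constructions}. For brevity call a cycle vertex \emph{heavy} if it has degree at least three and \emph{light} if it has degree two.

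For part (i), when $v_i$ and $v_{i+g/2}$ are both light, they are mutually antipodal light vertices, so by the construction the only edge touching either of them is the matching edge $v_iv_{i+g/2}$: neither is blown up into a clique, and neither antipode carries a clique to which it could attach. Hence $\{v_i,v_{i+g/2}\}$ is a $K_2$ component of $G_{SR}$. In an isolated edge each endpoint lies in some maximum independent set (take that endpoint together with a maximum independent set of the remaining components), so neither $v_i$ nor $v_{i+g/2}$ belongs to $VC(G_{SR})$; that is, neither is a strong basis forced vertex. Note this part needs no global hypothesis.

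For part (ii), the hypothesis rules out heavy--heavy antipodal pairs, so every heavy vertex $v_j$ (replaced by a sub-clique $Q_j\subseteq K$) has a light antipode $a_j:=v_{j+g/2}$, and by the construction $a_j$ is adjacent to all of $Q_j$ and to nothing else. I would then describe the non-isolated part of $G_{SR}$ explicitly: it splits into $K_2$ components (the light--light antipodal pairs) and one component $B$ formed by the clique $K=\bigcup_j Q_j$ together with the attached vertices $\{a_j\}$, one hanging on each sub-clique $Q_j$, these attached vertices being pairwise non-adjacent. The key computation is that $B$ has independence number equal to $r$, the number of heavy vertices: any independent set uses at most one vertex of the clique $K$, and choosing some $x\in Q_s$ excludes only $a_s$ among the attached vertices, so every optimal choice has size $r$. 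It follows that every vertex of $B$ lies in a maximum independent set, namely $\{a_1,\dots,a_r\}$ for an attached vertex and $\{x\}\cup\{a_t : t\ne s\}$ for $x\in Q_s$. Combining $B$ with the $K_2$ components (either endpoint) and the isolated vertices (always), every vertex of $G_{SR}$ lies in some maximum independent set, whence $corona(G_{SR})=V(G_{SR})$ and $VC(G_{SR})=\emptyset$, so $G$ has no strong basis forced vertices.

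The main obstacle is making the structural description of $G_{SR}$ precise and exhaustive from Proposition~\ref{pro:constructions}. Two points deserve care: first, that a light vertex whose antipode is heavy attaches to exactly its own sub-clique and to no other attached vertex -- this is exactly where the heavy--heavy exclusion is used, since two attached vertices $a_j,a_{j'}$ are adjacent precisely when they are antipodal, i.e. when $v_j$ and $v_{j'}$ are antipodal, which would force a forbidden heavy--heavy pair; and second, that there are no extraneous edges (e.g. between an attached vertex and a sub-clique other than its own, or between attached vertices and the $K_2$ components). Once these adjacencies are pinned down, the independence-number argument for $B$ is routine.
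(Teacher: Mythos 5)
Your proof is correct and takes essentially the same approach as the paper's: both reduce the claim to showing $VC(G_{SR})=\emptyset$ on the relevant vertices, prove (i) via the isolated $K_2$ component, and prove (ii) by decomposing $G_{SR}$ into light--light $K_2$ components, isolated vertices, and the clique of leaves with attached light vertices whose neighborhoods partition the clique, so that every vertex lies in some maximum independent set. Your explicit computation $\alpha(B)=r$ with the swap argument is just a per-component version of the paper's observation that the independence number equals $g/2$ with $corona(G_{SR})=V(G_{SR})$, and your remark pinning down where the heavy--heavy exclusion is used is a detail the paper leaves implicit.
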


\begin{proof}
%
	
	(i) If $v_i$ and $v_{i+g/2}$ have degree two, then clearly $v_i$ and $v_{i+g/2}$ induce a component in $G_{SR}$ isomorphic to $K_2$. Thus, neither of them belongs to $VC(G_{SR})$, and so they are not strong basis forced vertices.
	
	(ii) If $v_i$ and $v_{i+g/2}$ are both of degree two, then by (i), they induce a component in $G_{SR}$ isomorphic to $K_2$ and are not strong basis forced vertices. Assume now that exactly one of the vertices $v_i$ or $v_{i+g/2}$ is of degree two in $G$. From the constructions given in Proposition \ref{pro:constructions}, the strong resolving graph $G_{SR}$ is isomorphic to a graph having a vertex partition formed by three sets $X,Y,Z$, where $X$ induces a clique (all the leaves in $G$), $Y\cup Z$ induces an independent set ($Y$ is the set of vertices of $C$ of degree two and $Z$ is the set of cut vertices of $G$), the neighborhoods of vertices of $Y$ in $X$ form a vertex partition of $X$, and the vertices of $Z$ are isolated. Thus, one can readily observe that the independence number of such graph equals $g/2$ and that $VC(G_{SR})=\emptyset$ since $corona(G_{SR})=V(G_{SR})$. Therefore, there is no strong basis forced vertex as well.
\end{proof}

From now on, in order to give the number of strong basis forced vertices of unicyclic graphs we shall describe some notations and definitions. Based on the results listed above, it is enough to consider unicyclic graphs with structure as the graph $G'$ described in Proposition \ref{prop:equal-SRG}. Consider $G$ is a unicyclic graph with cycle $C = v_0v_1\cdots v_{g-1}v_0$ with $g$ even. For a vertex $v_i$ of $C$, by $Q_{v_i}$ we denote the set of vertices of degree one in $G$ adjacent to $v_{i+g/2}$ (if $v_{i+g/2}$ has degree two, then $Q_{v_i}$ is an empty set). By $D_2(C)$ we denote the set of vertices from the pairs $v_j,v_{j+g/2}$ such that both vertices have degree two, and let $d_2(C)=|D_2(C)|/2$. Also, by $D_{>2}(C)$ we denote the set of vertices from the pairs $v_j,v_{j+g/2}$ such that both vertices have degree larger than two, and let $d_{>2}(C)=|D_{>2}(C)|/2$. Notice that if $D_{>2}(C)=\emptyset$ or $D_{>2}(C)=C$, then $G$ has no strong basis forced vertices, according to Lemma~\ref{lem:basic-prop}(i) and Lemma~\ref{lem:basic-prop_even}(ii). Thus, from now on we assume that $D_{>2}(C)\ne\emptyset$ and that $D_{>2}(C)\ne C$ for any graph $G$. With these notations, we note the following fact.

\begin{lemma}\label{lem:indep-GSR-even}
Let $G$ be a unicyclic graph such that its girth $g$ is even, the cycle $C = v_0v_1\cdots v_{g-1}v_0$ and $D_{>2}(C)\ne\emptyset$. Then a maximum independent set of $G_{SR}$ consist of \emph{(i)} the vertices $v_i \in C$ with $\deg_G(v_i) \geq 3$, denoted by $X$, \emph{(ii)} one vertex of each pair $v_j,v_{j+g/2}\in D_2(C)$, (iii) the set of vertices $v_i$ of degree two in $G$ such that $Q_{v_i}\ne \emptyset$, and (iv) one vertex of the set $\bigcup_{v_i\in D_{>2}(C)}Q_{v_i}$. Thus, in total, the cardinality of a maximum independent set of $G_{SR}$ is at least $|X|+g/2-d_{>2}(C)+1$.
\end{lemma}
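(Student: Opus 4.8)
The plan is to produce the set described in items (i)--(iv), verify that it is independent in $G_{SR}$ by a short case analysis, and then count its elements to obtain the claimed bound on the independence number. Since the conclusion only asserts that the maximum independent set has cardinality \emph{at least} $|X|+g/2-d_{>2}(C)+1$, it suffices to exhibit one independent set of exactly this size.

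First I would recall the shape of $G_{SR}$ from Proposition~\ref{pro:constructions}, using the reduced form of Proposition~\ref{prop:equal-SRG}: the leaves of $G$ induce a clique; each cycle vertex of degree at least three is a cut vertex and hence \emph{isolated} in $G_{SR}$; and each degree-two cycle vertex $v_i$ is adjacent either to its antipode $v_{i+g/2}$ (precisely when that vertex also has degree two, i.e.\ the pair lies in $D_2(C)$) or to every leaf of $Q_{v_i}$ (when $v_{i+g/2}$ has degree at least three, so that $Q_{v_i}\neq\emptyset$). The key consequence is that two distinct degree-two cycle vertices can be adjacent only if they are antipodal, and a leaf $p$ attached to $v_{i+g/2}$ has as its only degree-two cycle-vertex neighbour the antipode $v_i$.

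Then I would take $I$ to be the union of the four groups and check independence group by group. The vertices of $X$ are isolated, so contribute no edges. In group (ii) we take one vertex per $D_2(C)$-pair, so no two are antipodal; in group (iii) every chosen vertex is degree two with a degree-$\geq 3$ antipode, so again no two are antipodal and each meets only leaves; hence (ii) and (iii) are internally independent and, being pairwise non-antipodal degree-two cycle vertices, independent of each other. For group (iv), the hypothesis $D_{>2}(C)\neq\emptyset$ guarantees some $Q_{v_i}\neq\emptyset$, so the single chosen leaf $p$ exists; its unique degree-two cycle-vertex neighbour is the antipode of the vertex carrying $p$, which has degree at least three and therefore belongs to neither (ii) nor (iii). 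As $I$ contains only this one leaf, the leaf-clique causes no further conflict, and $I$ is independent. Finally I would count: partitioning the $g/2$ antipodal pairs into $D_2(C)$-pairs, $D_{>2}(C)$-pairs, and mixed pairs, group (iii) is exactly the set of degree-two vertices of the mixed pairs, of which there are $g/2-d_2(C)-d_{>2}(C)$; adding $|X|$, the $d_2(C)$ vertices of (ii), and the single vertex of (iv) gives $|I|=|X|+d_2(C)+(g/2-d_2(C)-d_{>2}(C))+1=|X|+g/2-d_{>2}(C)+1$.

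The main obstacle is the independence verification, and within it the single genuinely delicate point is excluding edges between the leaf of group (iv) and the degree-two cycle vertices of groups (ii) and (iii); this rests entirely on the observation that a leaf is MMD only with the degree-two vertex antipodal to its attachment point, together with the fact that this attachment point is chosen (via $D_{>2}(C)$) to have degree at least three, so its antipode never appears among the selected degree-two vertices. The pair-type bookkeeping behind the cardinality count is then routine.
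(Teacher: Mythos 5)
The paper states this lemma without any proof at all (it is introduced with ``we note the following fact''), so there is no paper argument to compare against; your proposal supplies the missing verification, and the independent set you build is exactly the set described in items (i)--(iv) of the statement. Your structural reading of $G_{SR}$ from Propositions~\ref{pro:constructions} and~\ref{prop:equal-SRG} is accurate: edges among cycle vertices come only from the $g/2$ antipodal pairs, cycle vertices of degree at least three are cut vertices and hence isolated, a degree-two vertex $v_i$ with a degree-$\geq 3$ antipode is adjacent precisely to the leaves in $Q_{v_i}$, and the leaves form a clique. Your case analysis for independence is sound, including the one delicate point you correctly isolate (the group-(iv) leaf is attached to a vertex of a $D_{>2}(C)$-pair, so its unique potential degree-two cycle-vertex neighbour has degree at least three and is never selected), and the pair-type count giving $|X|+d_2(C)+(g/2-d_2(C)-d_{>2}(C))+1$ is right.

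There is, however, one shortfall relative to the full statement. You read the conclusion as only the inequality and declare it suffices to exhibit an independent set of the stated size, but the lemma's first sentence asserts that the described set \emph{is a maximum} independent set, and the paper immediately uses maximality: the sentence following the lemma derives membership in $corona(G_{SR})$ (vertices lying in \emph{some maximum} independent set) by varying the choices in (ii) and (iv), and the subsequent theorem on strong basis forced vertices rests on this. A merely independent set of the stated size would not justify those corona claims. Fortunately the gap closes with the matching upper bound, which your own structural observations already provide: any independent set of $G_{SR}$ contains at most one leaf (the leaves form a clique), at most one vertex from each $K_2$ component arising from a $D_2(C)$-pair, all $|X|$ isolated vertices, and at most the $g/2-d_2(C)-d_{>2}(C)$ degree-two vertices of mixed pairs, so $\alpha(G_{SR}) \leq |X|+g/2-d_{>2}(C)+1$ in the reduced form of Proposition~\ref{prop:equal-SRG}, and your set is therefore maximum. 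You should add these two lines to make the proof serve the lemma's intended role.
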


As a consequence of the lemma, one can see that the set $corona(G_{SR})$ contains the vertices $v_i \in C$ with $\deg_G(v_i) \geq 3$, both vertices of the pairs $v_j,v_{j+g/2}\in D_2(C)$, the set of vertices $v_i$ of degree two in $G$ such that $Q_{v_i}\ne \emptyset$, and every vertex of the set $\bigcup_{v_i\in D_{>2}(C)}Q_{v_i}$.

\begin{theorem}
	Let $G$ be a unicyclic graph with the cycle $C = v_0v_1\cdots v_{g-1}v_0$ with $g$ even.
	\begin{itemize}
		\item[\emph{(i)}] If $D_{>2}(C)=\emptyset$, then there are no strong basis forced vertices of $G$.
		\item[\emph{(ii)}] If $D_{>2}(C)\neq\emptyset$, then the strong basis forced vertices of $G$ are the vertices of the nonempty sets $Q_{v_i}$. Hence, in total, the number of strong basis forced vertices is $\sum_{v_i\notin D_2(C)\cup D_{>2}(C)}|Q_{v_i}|$.
	\end{itemize}
\end{theorem}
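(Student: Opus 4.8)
The plan is to translate the statement entirely into the language of the strong resolving graph. Recall that, via Theorem~\ref{th:strong-dim-cover} and Gallai's theorem, the strong basis forced vertices of $G$ are exactly the vertices of $VC(G_{SR})=V(G_{SR})\setminus corona(G_{SR})$, i.e. those lying in \emph{no} maximum independent set of $G_{SR}$. Since by Proposition~\ref{prop:equal-SRG} we may assume $G$ is already in the reduced form $G'$, the description in Proposition~\ref{pro:constructions} gives a concrete picture of $G_{SR}$: the degree-$\geq 3$ cycle vertices are isolated (cut) vertices; each pair $v_j,v_{j+g/2}\in D_2(C)$ spans a component isomorphic to $K_2$; and the remaining non-isolated vertices form one block consisting of the clique $L=N_1(G)$ of all leaves together with the set $B$ of degree-two cycle vertices $v_i$ having $Q_{v_i}\neq\emptyset$, where each such $v_i$ is adjacent exactly to the leaves of $Q_{v_i}$ and $B$ is independent.

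First I would record the ``easy'' memberships in $corona(G_{SR})$ already noted after Lemma~\ref{lem:indep-GSR-even}: every isolated vertex lies in every maximum independent set; for each $K_2$ coming from $D_2(C)$ either endpoint lies in some maximum independent set; and each $v_i\in B$, as well as each free leaf of $\bigcup_{v_i\in D_{>2}(C)}Q_{v_i}$, lies in some maximum independent set. Hence none of these vertices can be strong basis forced, and the only vertices whose status remains to be decided are the \emph{attached} leaves, i.e. the leaves adjacent to some vertex of $B$, which are precisely the vertices of the sets $Q_{v_i}$ with $v_i\notin D_2(C)\cup D_{>2}(C)$ and $Q_{v_i}\neq\emptyset$.

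The core is a counting/exchange argument inside the block $L\cup B$. Because $L$ is a clique, any independent set meets $L$ in at most one vertex. If that leaf is attached to some $b\in B$, then $b$ must be omitted and the block contributes at most $|B|$ vertices; if the chosen leaf is free, it is adjacent to no vertex of $B$ and the block contributes $|B|+1$ vertices; taking no leaf again yields at most $|B|$. For part~(i), where $D_{>2}(C)=\emptyset$ and hence no free leaf exists, the maximum is $|B|$, realised both by $B$ itself and, for every leaf $\ell\in Q_b$, by $(B\setminus\{b\})\cup\{\ell\}$; thus every vertex of the block lies in some maximum independent set, so $corona(G_{SR})=V(G_{SR})$ and $VC(G_{SR})=\emptyset$. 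For part~(ii), where $D_{>2}(C)\neq\emptyset$ guarantees a free leaf, the maximum is $|B|+1$ (matching the cardinality computed in Lemma~\ref{lem:indep-GSR-even}), so every maximum independent set of the block consists of all of $B$ together with a single free leaf. Consequently an attached leaf, being adjacent both to its own vertex of $B$ and, within the clique $L$, to the chosen free leaf, can belong to no maximum independent set; this places exactly the attached leaves in $VC(G_{SR})$ and yields the count $\sum_{v_i\notin D_2(C)\cup D_{>2}(C)}|Q_{v_i}|$.

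The step I expect to be the main obstacle is pinning down that in case~(ii) \emph{no} maximum independent set contains an attached leaf, equivalently that the maximum independent set of $L\cup B$ has size exactly $|B|+1$ and is forced to use a free leaf. This is where the clique structure of $L$ and the disjointness of the sets $Q_{v_i}$ must be used carefully; the bookkeeping of which pairs $v_j,v_{j+g/2}$ fall into $D_2(C)$, into $D_{>2}(C)$, or into the mixed case (and thus which $Q_{v_i}$ are free versus attached) is routine but must be handled precisely so as to match the stated index set $v_i\notin D_2(C)\cup D_{>2}(C)$.
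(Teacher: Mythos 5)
Your proposal is correct and follows essentially the same route as the paper: the paper's proof of part~(i) invokes Lemma~\ref{lem:basic-prop_even}(ii) and part~(ii) invokes Lemma~\ref{lem:indep-GSR-even} together with the corona observation following it, which is exactly your analysis of $G_{SR}$ into isolated vertices, $K_2$ components from $D_2(C)$, and the block $L\cup B$, followed by the computation of maximum independent sets and the complement $VC(G_{SR})=V(G_{SR})\setminus corona(G_{SR})$. Your exchange argument pinning the block's maximum at $|B|+1$ (forcing all of $B$ plus one free leaf, hence excluding every attached leaf) simply makes explicit the details the paper leaves implicit in those two lemmas.
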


\begin{proof}
	(i) The first claim immediately follows by Lemma~\ref{lem:basic-prop_even}(ii) (as discussed earlier). (ii) Assuming  $D_{>2}(C)\neq\emptyset$, the second claim straightforwardly follows by Lemma~\ref{lem:indep-GSR-even} (and the observations afterwards).
\end{proof}

One conclusion that we can deduce from the result above is that the unicyclic graphs $G$ whose unique cycle has even order can have only strong basis forced vertices which are vertices of degree one. In contrast with this situation, as shown in the next section, for the case of unicyclic graphs $G$ whose unique cycle has odd order, there could be strong basis forced vertices which are vertices of degree one as well as vertices of the cycle; see Theorem~\ref{Thm_Char_odd_cycle} and Example~\ref{Ex_odd_cycle}. 

\subsection{Unicyclic graphs with an odd cycle}

In this section, we characterize the strong basis forced vertices in the unicyclic graphs $G$ of which unique cycle $C = v_0v_1\cdots v_{g-1}v_0$ has an odd order $g$. To this end, we first introduce some notation and terminology. Recall that the vertices $u$ and $v$ in $C$ are MMD in $G$ if and only if $\deg_G(u) = \deg_G(v) = 2$ and $d_G(u,v) = (g-1)/2$. Assuming $i,j\in \{0,\dots,g-1\}$, let $Q_{[i,j]}$ denote the maximal sequence of vertices $v_{i}v_{i+(g-1)/2}v_{i+1}v_{i+1+(g-1)/2}\cdots v_j$ in $G_{SR}$ such that the degree of each vertex in $G$ is equal to $2$ and each pair of consecutive vertices are MMD, \emph{i.e.}, adjacent in $G_{SR}$. Hence, $j$ is either equal to $i+k$ or $i+k+(g-1)/2$ for some $k \geq 0$. We call $v_i$ and $v_j$ the \emph{end-vertices} of $Q_{[i,j]}$.
Furthermore, we denote $q_{[i,j]}=|Q_{[i,j]}|$ and say that the sequence $Q_{[i,j]}$ is of \emph{odd length} (or simply \emph{odd}) if $2 \nmid q_{[i,j]}$, and otherwise it is of \emph{even length} (or simply \emph{even}). Denote the set of every other vertex of $Q_{[i,j]}$ starting from the first one by $A_1(Q_{[i,j]})$ and the set of every other vertex of $Q_{[i,j]}$ starting from the second one by $A_2(Q_{[i,j]})$. Notice that $|A_1(Q_{[i,j]})| = \lceil q_{[i,j]}/2 \rceil$ and $|A_2(Q_{[i,j]})| = \lfloor q_{[i,j]}/2 \rfloor$.

Analogously to the sets $Q_{v_i}$ in the case of unicyclic graphs with an odd cycle, we now define $Q'_{v_i}$ and $Q''_{v_i}$ to be the sets of vertices of degree one in $G$ adjacent to $v_{i+\lfloor g/2\rfloor}$ and $v_{i+\lceil g/2\rceil}$, respectively. Observe that the vertices of $Q'_{v_i}$ and $Q''_{v_i}$ are MMD with $v_i$ or its leaves (if such vertices exist) and that if $v_{i+\lfloor g/2\rfloor}$ or $v_{i+\lceil g/2\rceil}$ has degree two in $G$, then $Q'_{v_i}$ or $Q''_{v_i}$, respectively, is empty. Furthermore, if $Q_{[i,j]}$ is a maximal sequence of vertices as defined above, then $Q'_{v_i}$ or $Q''_{v_i}$ is non-empty, as well as, $Q'_{v_j}$ or $Q''_{v_j}$ is non-empty. We may now consider the graph $G_{SR}$ to be formed as follows:
\begin{itemize}
	\item The vertices of each maximal sequence $Q_{[i,j]}$ induce a path in $G_{SR}$.
	\item The leaves (or pendants) of $G$ induce a clique in $G_{SR}$.
	\item Each leaf $u$ of $G$ is adjacent to $v_i \in C$ in $G_{SR}$ if and only if $\deg_G(v_i) = 2$ and $u \in Q'_{v_i} \cup Q''_{v_i}$.
\end{itemize}

In the following theorem, we give a characterization of the strong basis forced vertices in the case of an odd cycle $C$. Notice that in the cases~(i) and (iii) we might have $corona(G_{SR})=V(G_{SR})$ implying $G$ has no strong basis forced vertices.

\begin{theorem} \label{Thm_Char_odd_cycle}
	Let $U$ be a subset of vertices $u \in C$ such that $\deg(u) \geq 3$ and the leaves of $u$ in $G$ are not adjacent to an end-vertex of a maximal sequence $Q_{[i,j]}$ of odd length in $G_{SR}$. The strong basis forced vertices of $G$ can be determined based on $U$ as follows:
	\begin{itemize}
		\item[\emph{(i)}] If $|U| \geq 2$, or $|U| = 1$ and the unique vertex $u \in U$ is such that the leaves of $u$ in $G$ are not adjacent to any maximal sequence in $G_{SR}$, then the strong basis forced vertices of $G$ are the leaves of $G$ adjacent to a maximal sequence of odd length in $G_{SR}$ and the vertices $A_2(Q_{[i,j]})$ for each $Q_{[i,j]}$ of odd length.
		\item[\emph{(ii)}] If $|U| = 1$ and the unique vertex $u \in U$ is such that the leaves of $u$ in $G$ are adjacent to a maximal sequence of even length in $G_{SR}$, then the strong basis forced vertices of $G$ are the leaves adjacent to a maximal sequence of odd length in $G_{SR}$, the vertices $A_2(Q_{[i,j]})$ for each $Q_{[i,j]}$ of odd length and the vertices $A_k(Q_{[i,j]})$ of a maximal sequence of even length adjacent in $G_{SR}$ to the leaves of $u$ in $G$, where $k$ is chosen in such a way that the vertex of $Q_{[i,j]}$ adjacent in $G_{SR}$ to the leaves of $u$ in $G$ belongs to $A_k(Q_{[i,j]})$.
		\item[\emph{(iii)}] If $U = \emptyset$, then the strong basis forced vertices of $G$ are the leaves of $G$ adjacent to two maximal sequences of odd length in $G_{SR}$, denoted by $X$, and the vertices $A_2(Q_{[i,j]})$ for each odd maximal sequence $Q_{[i,j]}$ with its end-vertices adjacent to two vertices $x_1$ and $x_2$ of $X$ in $G_{SR}$ such that $x_1$ and $x_2$ are not leaves of a same vertex of $C$ in $G$.
	\end{itemize}
\end{theorem}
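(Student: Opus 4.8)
The plan is to translate the statement, as throughout this section, into a question about maximum independent sets of $G_{SR}$. By Gallai's theorem and the discussion following Theorem~\ref{th:strong-dim-cover}, the strong basis forced vertices of $G$ are exactly the vertices of $VC(G_{SR}) = V(G_{SR}) \setminus corona(G_{SR})$, i.e.\ those lying in \emph{no} maximum independent set of $G_{SR}$. I would first fix the structure of $G_{SR}$ recalled just before the statement: a clique $K$ on the leaves $N_1(G)$, a disjoint union of paths (one per maximal sequence $Q_{[i,j]}$ of degree-two cycle vertices), and the edges joining a leaf to an \emph{end-vertex} $v_i$ of a sequence precisely when that leaf lies in $Q'_{v_i}\cup Q''_{v_i}$. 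The combinatorial facts I would record are: (a) an independent set meets $K$ in at most one vertex; (b) a path of odd length $q_{[i,j]}$ has a \emph{unique} maximum independent set, namely $A_1(Q_{[i,j]})$ of size $\lceil q_{[i,j]}/2\rceil$, so $A_2(Q_{[i,j]})$ never belongs to a path-maximum set; (c) a path of even length has exactly the two maximum independent sets $A_1$ and $A_2$; and (d) forbidding one end-vertex of an odd path still allows $A_2$ to be realised, whereas forbidding \emph{both} end-vertices forces the contribution to equal $A_2$.

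For a leaf $y$, let its \emph{loss} be the drop in the total path-contribution $\Sigma = \sum_{Q_{[i,j]}}\lceil q_{[i,j]}/2\rceil$ caused by deleting the end-vertices adjacent to $y$. Since $y$ touches at most its two antipodal end-vertices, the loss is $0$, $1$ or $2$, and a loss-$0$ leaf is exactly a leaf of a vertex of $U$. I would then compute $\alpha(G_{SR})$: a leaf-free independent set attains $\Sigma$, while adding one leaf $y$ gives $1+\Sigma-\mathrm{loss}(y)$. Hence $\alpha(G_{SR})=\Sigma+1$ when a loss-$0$ leaf exists (equivalently $U\neq\emptyset$) and $\alpha(G_{SR})=\Sigma$ otherwise. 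This dichotomy separates cases (i)--(ii) from case (iii), and matches the case split on $|U|$.

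When $U\neq\emptyset$ (cases (i) and (ii)) every maximum independent set must select a loss-$0$ leaf and must realise each odd path by its unique maximum set $A_1$; by (b) this forces out every $A_2(Q_{[i,j]})$ of odd length, and any leaf of positive loss -- in particular every leaf adjacent to an odd sequence -- cannot be the selected leaf, so it too lies in no maximum set. To finish (i) I would check that, when $|U|\ge 2$ or the sole $u\in U$ touches no sequence, there is enough freedom (a different loss-$0$ leaf, or one whose forbidden end-vertices are irrelevant) to place each remaining vertex -- every even-path vertex and every loss-$0$ leaf -- in some maximum set. For (ii) the single loss-$0$ leaf is forced, and as it forbids an end-vertex of an even sequence it freezes that even path to the maximum set avoiding the forbidden end-vertex; the complementary set $A_k(Q_{[i,j]})$ (the one containing the adjacent end-vertex) is then excluded from every maximum set, yielding the extra forced vertices.

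The delicate case is $U=\emptyset$ (case (iii)), where $\alpha(G_{SR})=\Sigma$ and a maximum independent set is either leaf-free or uses a single leaf of loss $1$. A leaf of loss $2$ -- equivalently one adjacent to two \emph{distinct} odd sequences -- gives value $\Sigma-1$ and so lies in no maximum set; these are exactly the vertices $X$. For an odd sequence $Q_{[i,j]}$, the set $A_2(Q_{[i,j]})$ can be placed in a maximum set precisely when some loss-$1$ leaf forbids an end-vertex of $Q_{[i,j]}$, since then by (d) the path may switch to $A_2$; hence $A_2(Q_{[i,j]})$ is forced exactly when both its end-vertices meet only leaves of $X$. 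The main obstacle is the loss bookkeeping when the two end-vertices adjacent to a leaf coincide with the two ends of the \emph{same} odd path: there the drop is only $1$, so such a leaf actually has loss $1$, sits in a maximum set, and realises $A_2(Q_{[i,j]})$. This is precisely the degeneracy excluded by the clause ``$x_1$ and $x_2$ are not leaves of a same vertex of $C$'', and verifying that this is the only way $A_2$ can be rescued is the step demanding the most care.
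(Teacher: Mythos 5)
Your proposal takes essentially the same route as the paper: both reduce the statement to computing $corona(G_{SR})$ for the graph built from the clique on $N_1(G)$ and the paths induced by the maximal sequences, both establish that $\alpha(G_{SR})$ equals $1+\sum_{\ell}\lceil q_{[i_\ell,j_\ell]}/2\rceil$ when $U\neq\emptyset$ and $\sum_{\ell}\lceil q_{[i_\ell,j_\ell]}/2\rceil$ when $U=\emptyset$, and both obtain the forced vertices as $V(G_{SR})\setminus corona(G_{SR})$. Your ``loss'' function is a clean formalization of the counting the paper does implicitly, and your final paragraph isolates exactly the degeneracy behind the paper's clause in case (iii) that $x_1$ and $x_2$ not be leaves of the same cycle vertex: a leaf attached to both ends of one odd sequence has loss $1$, lies in a maximum independent set, and realises $A_2(Q_{[i,j]})$.

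One auxiliary claim is false as stated, though harmlessly so: your fact (c), that a path with an even number of vertices has exactly the two maximum independent sets $A_1$ and $A_2$. Already $P_4=v_1v_2v_3v_4$ has the third maximum independent set $\{v_1,v_4\}$, and in general $P_{2m}$ has $m+1$ maximum independent sets. What your argument actually needs in the two places (c) is invoked is weaker and true. In case (i) you only need that every vertex of an even path lies in some maximum independent set (indeed $A_1\cup A_2$ already covers all of them), so all even-path vertices land in $corona(G_{SR})$. In case (ii) you need that an even path avoiding one prescribed end-vertex has a \emph{unique} maximum independent set, namely the alternating set starting from the other end; this follows because deleting that end-vertex leaves a path with an odd number of vertices, whose maximum independent set is unique -- your fact (b). With (c) replaced by this observation, your ``freezing'' step is justified and reproduces the paper's $A_k$ versus $A_{3-k}$ bookkeeping, so the proof stands after this small repair.
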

\begin{proof}
	Observe first that if $U \neq \emptyset$, then the cardinality of a maximum independent set of $G_{SR}$ is equal to
	\[
	1 + \sum_{\ell=1}^t \lceil q_{[i_{\ell},j_{\ell}]} / 2 \rceil \text,
	\]
	where $t$ denotes the number of maximal sequences $Q_{[i,j]}$. Indeed, a maximum independent set of $G_{SR}$ can be formed by choosing a leaf of a vertex belonging $U$ and the $\lceil q_{[i_{\ell},j_{\ell}]}/2 \rceil$ suitable (every other) vertices of each $Q_{[i,j]}$.
	
	(i) Assume that $|U| \geq 2$ or that $|U| = 1$ and the unique vertex $u \in U$ is such that the leaves of $u$ in $G$ are not adjacent to any maximal sequence in $G_{SR}$. It is straightforward to verify the set $corona(G_{SR})$ consist of the isolated vertices of $G_{SR}$, the leaves of vertices of $U$ in $G$, the vertices in the maximal sequences of even length and the vertices $A_1(Q_{[i,j]})$ of each $Q_{[i,j]}$ of odd length. Therefore, as $VC(G_{SR}) = V(G_{SR}) \setminus corona(G_{SR})$, the claim immediately follows.
	
	(ii) Assume that $|U| = 1$ and the unique vertex $u \in U$ is such that the leaves of $u$ in $G$ are adjacent to a maximal sequence of even length in $G_{SR}$. It can be easily seen that $corona(G_{SR})$ consist of the isolated vertices of $G_{SR}$, the leaves of $u$ in $G$, the vertices $A_1(Q_{[i,j]})$ of each $Q_{[i,j]}$ of odd length and the vertices $A_{3-k}(Q_{[i',j']})$ of a maximal sequence $Q_{[i',j']}$ of even length adjacent (in $G_{SR}$) to the leaves of $u$ (in $G$), where $k$ is chosen in such a way that the vertex of $Q_{[i',j']}$ adjacent (in $G_{SR}$) to the leaves of $u$ (in $G$) does not belong to $A_{3-k}(Q_{[i',j']})$. Therefore, as $VC(G_{SR}) = V(G_{SR}) \setminus corona(G_{SR})$, the claim immediately follows.
	
	(iii) Finally, assume that $U = \emptyset$. Now the cardinality of a maximum independent set of $G_{SR}$ is equal to
	\begin{equation} \label{Eq_size_max_independent}
		\sum_{\ell=1}^t \lceil q_{[i_{\ell},j_{\ell}]}/2 \rceil
	\end{equation}
	since each leaf of $G$ is adjacent to a maximal sequence of odd length in $G_{SR}$. Denote by $X$ the leaves of $G$ which are adjacent to two maximal sequences of odd length in $G_{SR}$. Clearly, no vertex of $X$ belongs to $corona(G_{SR})$ since otherwise the size given in~\eqref{Eq_size_max_independent} cannot be reached. Suppose then that $Q_{[i,j]}$ of odd length is such that its end-vertices are adjacent to two vertices $x_1$ and $x_2$ of $X$ in $G_{SR}$ with $x_1$ and $x_2$ not being leaves of a same vertex of $C$ in $G$. Applying a similar argument as above, it can be deduced that the vertices $A_2(Q_{[i,j]})$ do not belong to $corona(G_{SR})$. However, all the other vertices except the mentioned ones belong to $corona(G_{SR})$. Thus, the claim follows.
\end{proof}

Before the previous theorem, it was briefly mentioned that in some cases no strong basis forced vertices might occur. In the following straightforward corollary, this possibility is further discussed.
\begin{corollary}
	Let $U$ be a subset of vertices $u \in C$ such that $\deg(u) \geq 3$ and the leaves of $u$ in $G$ are not adjacent to an end-vertex of a maximal sequence $Q_{[i,j]}$ of odd length in $G_{SR}$.
	\begin{itemize}
		\item[\emph{(i)}] Assume that $|U| \geq 2$, or $|U| = 1$ and the unique vertex $u \in U$ is such that the leaves of $u$ in $G$ are not adjacent to any maximal sequence in $G_{SR}$. All the maximal sequences are of even length in $G_{SR}$ if and only if there are no strong basis forced vertices in $G$.
		\item[\emph{(ii)}] Assume that $U = \emptyset$. There are no leaves of $G$ adjacent to two maximal sequences of odd length in $G_{SR}$ if and only if there are no strong basis forced vertices in $G$.
	\end{itemize}
\end{corollary}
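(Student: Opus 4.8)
The plan is to obtain both equivalences directly from the explicit description of the strong basis forced vertices (i.e.\ of $VC(G_{SR})$) provided by Theorem~\ref{Thm_Char_odd_cycle}. The key observation is that the hypotheses of parts~(i) and~(ii) of the corollary place us exactly in cases~(i) and~(iii) of that theorem, respectively, so in each case it only remains to determine when the listed forced set becomes empty.

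For part~(i), I would first note that its hypotheses are precisely those of Theorem~\ref{Thm_Char_odd_cycle}(i), whence $VC(G_{SR})$ consists of the leaves of $G$ adjacent to a maximal sequence of odd length together with the sets $A_2(Q_{[i,j]})$ for the odd sequences $Q_{[i,j]}$. The forward direction is immediate: if every maximal sequence is even there are no odd sequences, both families are empty, and $G$ has no strong basis forced vertex. For the converse I would argue contrapositively: given an odd maximal sequence $Q_{[i,j]}$, the remark preceding Theorem~\ref{Thm_Char_odd_cycle} guarantees $Q'_{v_i}\cup Q''_{v_i}\neq\emptyset$, so an end-vertex of $Q_{[i,j]}$ is adjacent in $G_{SR}$ to a leaf of $G$; by the characterization this leaf lies in $VC(G_{SR})$, so $G$ has a strong basis forced vertex.

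For part~(ii), the hypothesis $U=\emptyset$ puts us in Theorem~\ref{Thm_Char_odd_cycle}(iii), where $VC(G_{SR})$ equals $X$ together with certain sets $A_2(Q_{[i,j]})$; here $X$ is the set of leaves adjacent to two odd maximal sequences and each admissible $Q_{[i,j]}$ must have its end-vertices adjacent to two vertices of $X$. I would then simply observe that if no leaf is adjacent to two odd sequences then $X=\emptyset$, which also kills every $A_2$-term (there are no eligible $Q_{[i,j]}$), giving $VC(G_{SR})=\emptyset$; and conversely, since $X\subseteq VC(G_{SR})$, a nonempty $X$ forces a strong basis forced vertex.

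I expect the only subtle point --- and the step I would write out most carefully --- to be the backward direction of part~(i) in the degenerate case of a single-vertex odd sequence, for which $A_2(Q_{[i,j]})=\emptyset$ and the $A_2$-family contributes nothing. There one cannot rely on the $A_2$-terms and must instead invoke the adjacency of an end-vertex to a leaf to exhibit a forced vertex. Everything else is a routine transcription of the relevant case of Theorem~\ref{Thm_Char_odd_cycle}, so no further machinery is needed.
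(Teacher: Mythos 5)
Your proposal is correct and follows exactly the paper's route: the paper proves this corollary in one line ("The results immediately follow by Theorem~\ref{Thm_Char_odd_cycle}"), and you simply unpack that same deduction, reading off when the forced sets in cases~(i) and~(iii) of the theorem are empty. Your extra care with the converse of~(i) --- using the observation preceding the theorem that an end-vertex of any maximal sequence is adjacent in $G_{SR}$ to a leaf of $G$, which handles the single-vertex odd sequence where $A_2(Q_{[i,j]})=\emptyset$ --- is a valid filling-in of a detail the paper leaves implicit, not a different argument.
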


\begin{proof}
	The results immediately follow by Theorem~\ref{Thm_Char_odd_cycle}.
\end{proof}

Observe that although it is not easy to give a simple closed formula for the number of strong basis forced vertices of a unicyclic graph with an odd cycle, the number can be straightforwardly computed based on Theorem~\ref{Thm_Char_odd_cycle}. Recall that in the case of a unicyclic graph with an even cycle, only the leaves of $G$ can be strong basis forced vertices. In the following example, we illustrate the fact that this is not the case with an odd cycle.
\begin{example} \label{Ex_odd_cycle}
	Let $G_{t,q}$ be a unicyclic graph with an odd cycle $C_{2t+1}=v_0v_1\cdots v_{2t}v_0$ and $q$ (distinct) pendants added to each of the vertices $v_0$, $v_t$, $v_{t+1}$ and $v_{2t}$, where $t$ and $q$ are integers such that $t, q \geq 2$. Now the strong resolving graph of $G_{t,q}$ consist of a unique maximal sequence $Q_{[1,t-1]}$ with odd length $2t-3$, the isolated vertices $v_0$, $v_t$, $v_{t+1}$ and $v_{2t}$, and a clique of the $4q$ leaves of $G_{t,q}$. Furthermore, the set $U$ of Theorem~\ref{Thm_Char_odd_cycle} consist of the vertices $v_0$ and $v_t$. Therefore, by Theorem~\ref{Thm_Char_odd_cycle}(i), the strong basis forced vertices of $G_{t,q}$ are the leaves of $v_{t+1}$ and $v_{2t}$ as well as the vertices of $A_2(Q_{[1,t-1]})$. Hence, in total, the number of strong basis forced vertices of $G_{t,q}$ is $\lfloor q_{[1,t-1]}/2 \rfloor + 2q = t-2+2q$. In particular, we can notice that the number of strong basis forced vertices in the leaves and in the cycle can be arbitrarily large.
\end{example}

\section{Concluding remarks}

Since it is already known that a unicyclic graph can have at most 2 basis forced vertices, this work has centered the attention into classifying unicyclic graphs according to the number of basis forced vertices they have. Those unicyclic graphs with number of branch-active
vertices on the cycle equal 1 (that is $b(G)=1$) have been completely dealt with. In consequence, since the unicyclic graphs having basis forced vertices satisfy that $b(G)\in \{0,1\}$, it remains an open problem of considering unicyclic graphs where $b(G)=0$. Some other interesting problems concerning basis forced vertices are as follows.
\begin{itemize}
  \item Are there some graph classes in which the problem of deciding whether a given vertex is a basis forced vertex will be polynomial?
  \item Can we determine the number of basis forced vertices in some simple superclasses of unicyclic graphs like cactus graphs for instance?
\end{itemize}

On the other hand, strong basis forced vertices in unicyclic graphs have been introduced and studied in this work. In this sense, it would be of clear interest to generalize the study of strong basis forced vertices to general graphs. In particular, the following questions could be of interest.
\begin{itemize}
  \item Which is the complexity of deciding whether a given vertex of a graph is a strong basis forced vertex?
  \item Can we compute or bound the number of strong basis forced vertices of general graph?
  \item Can we characterize the class of graphs having strong basis forced vertices?
  \item It would be also interesting to know that graphs that have a unique strong metric basis.
\end{itemize}

\section*{Acknowledgements}

Anni Hakanen has been financially supported by Jenny and Antti Wihuri foundation and the ANR project GRALMECO (ANR-21-CE48-0004-01). Anni Hakanen, Ville Junnila and Tero Laihonen have been partially supported by Academy of Finland grant number 338797. Ismael G.Yero has been partially supported by the Spanish Ministry of Science and Innovation through the grant PID2019-105824GB-I00.


\end{document}